\newtheorem{thm}{Theorem}[section] \newtheorem*{theorem*}{Theorem}
\newtheorem{lemma}[thm]{Lemma} \newtheorem*{lemma*}{Lemma}
\newtheorem{corollary}[thm]{Corollary} \newtheorem*{corollary*}{Corollary}
\newtheorem{prop}[thm]{Proposition} \newtheorem*{proposition*}{Proposition}
\newtheorem{definition}[thm]{Definition}
\newtheorem*{definition*}{Definition}
\newtheorem{remark}[thm]{Remark}
 \newtheorem{prop}{Proposition}[section]
 \newtheorem{corollary}{Corollary}[section]
 \newtheorem{lemma}{Lemma}[section]
 \newtheorem{remark}{Remark}[section]
 \newtheorem{definition}{Definition}[section]
 \newenvironment{proof}{\noindent {\bf Proof.}}{ \hfill $\Box$\\ }
\def\subjclass#1{\par\medskip
\noindent\textbf{Mathematics Subject Classification (2010):} #1}
\def\keywords#1{\par\medskip
\noindent\textbf{Keywords.} #1}
\newcommand{\bJ}{{\mathbb J}}
\newcommand{\N}{{\mathbb N}}
\newcommand{\Q}{{\mathbb Q}}
\newcommand{\R}{{\mathbb R}}
\newcommand{\T}{{\mathbb T}}
\newcommand{\Z}{{\mathbb Z}}
\newcommand\cA{{\mathcal A}}
\newcommand\bA{{\mathbb A}}
\newcommand\bB{{\mathbb B}}
\newcommand\bD{{\mathbb D}}
\newcommand{\tb}{|\!|\!|}
\newcommand{\ev}{\xi}
\newcommand{\fs}{\mathfrak s}
\newcommand{\ft}{\mathfrak t}
\newcommand\eps{\epsilon}
\def\eps{\varepsilon}
\newcommand{\Span}{\operatorname{span}}
\title{Interval Translation Maps with Weakly Mixing Attractors}
\author{Henk Bruin\thanks{Faculty of Mathematics, University of Vienna, 
		Oskar Morgensternplatz 1, 1090 Vienna, Austria; {\it henk.bruin@univie.ac.at, \quad silvia.radinger@univie.ac.at}.
		We gratefully acknowledge the support of the Austrian Exchange Service (WTZ Project PL 15/2022). We also thank Reem Yassawi for the fruitful conversations.}, \quad Silvia Radinger\footnotemark[1]}
\date{\today}
\begin{document}

	\maketitle
	
	\begin{abstract}
		We study linear recurrence and weak mixing of a two-parameter family of interval translation maps
		$T_{\alpha,\beta}$ for the subset of parameter space where $T_{\alpha,\beta}$ has a Cantor attractor.
		For this class, there is a procedure similar to the Rauzy induction which acts as a dynamical system
		$G$ on parameter space, which was used previously to decide whether $T_{\alpha,\beta}$
		has an attracting Cantor set, and if so, whether $T_{\alpha,\beta}$ is uniquely ergodic.
		In this paper we use properties of $G$ to decide
		whether $T_{\alpha,\beta}$ is linearly recurrent or weak mixing.
	\end{abstract}

	\subjclass{Primary: 37E05, Secondary: 37A05, 37A25, 37A30, 58F19, 58F11, 54H20}
	\keywords{interval translation map, linear recurrence, $S$-adic transformations, weak mixing, spectrum}

	\section{Introduction}

In \cite{BT} the following family of interval translation maps (ITMs) was introduced:
% $$
% T_{\alpha,\beta}(x) = \begin{cases}
%          x+\alpha, & x \in [0,1-\alpha), \\
%          x+\beta, & x \in [1-\alpha,1-\beta),\\
%          x-1+\beta, \qquad & x \in [1-\beta,1].
%        \end{cases}
% $$
%

\begin{figure}[h]
\begin{center}
\begin{tikzpicture}[scale=0.5]
\node at (2.5,2.5) {$T_{\alpha,\beta}(x) = \begin{cases}
         x+\alpha, & x \in [0,1-\alpha), \\
         x+\beta, & x \in [1-\alpha,1-\beta),\\
         x-1+\beta, \qquad & x \in [1-\beta,1].
       \end{cases}
$};
\draw[-] (15,0) -- (21,0) -- (21,6) -- (15,6) -- (15,0);
\draw[-,dashed] (15,0) -- (21,6);
\draw[-] (20,-0.2) -- (20,0); \draw[-] (18.2,-0.2) -- (18.2,0);
   \node at (14.8,-0.8) {\small $0$}; \node at (21.2,-0.8) {\small $1$};
    \node at (17.6,-0.8) {\small $1-\alpha$}; \node at (19.8,-0.8) {\small $1-\beta$};
     \node at (14.5,3) {\small $\alpha$}; \node at (21.5,1) {\small $\beta$};
 \draw[-, thick] (15,2.8) -- (18.2,6);
  \draw[-, thick] (18.2,4.2) -- (20,6);
   \draw[-, thick] (20,0) -- (21,1);
 \end{tikzpicture}
% \caption{}\label{fig:SFT}
\end{center}
\end{figure}

\vspace{-5mm}
\noindent
on the parameter space $U = \{ (\alpha,\beta) : 0 < \beta \leq \alpha \leq 1\}$.
Viewed as circle map, $T_{\alpha,\beta}$ is a piecewise rotation on two pieces; it was studied in greater generality in \cite{AFHS21, B06}.

ITMs come in: (i) finite type if reduced to a set of subintervals, it is an interval exchange transformation (IET),
and (ii) if the only compact invariant set are  Cantor sets,
and (iii) mixtures of the two.
Determining the type for this family goes via  renormalization consisting of the first return map to $[1-\alpha,1]$ and rescaling to unit size,
in analogy to Rauzy induction for IETs.
This transforms $T_{\alpha,\beta}$ into $T_{\alpha',\beta'}$ where
\begin{equation}\label{eq:G}
(\alpha', \beta') = G(\alpha,\beta) = \left(\frac{\beta}{\alpha}\ , \ \frac{\beta-1}{\alpha} + \Big\lfloor\frac{1}{\alpha}\Big\rfloor\right).
\end{equation}
Note that $G(U) = U \cup L$ for $L = \{ (\alpha,\beta) : \alpha - 1 \leq \beta \leq 0 \leq \alpha \leq 1\}$
and exactly the parameters in
$\Omega_\infty := \{ (\alpha,\beta) : G^n(\alpha,\beta) \in U^\circ \text{ for all } n \geq 0\}$
have the property that $T_{\alpha, \beta}$ is of infinite type, i.e.,
$\Omega := \bigcap_{n \geq 0} \overline{ T_{\alpha,\beta}^n([0,1]) }$ is a Cantor set on which $T_{\alpha,\beta}$ acts as a minimal endomorphism. See Figure~\ref{fig2} for the parameter set associated to type infinity ITMs.

\begin{figure}[ht]
\setlength{\unitlength}{1cm}
 \begin{picture}(10,6)
 \put(3.5,-4.5){\resizebox{8 cm}{16 cm}{\includegraphics{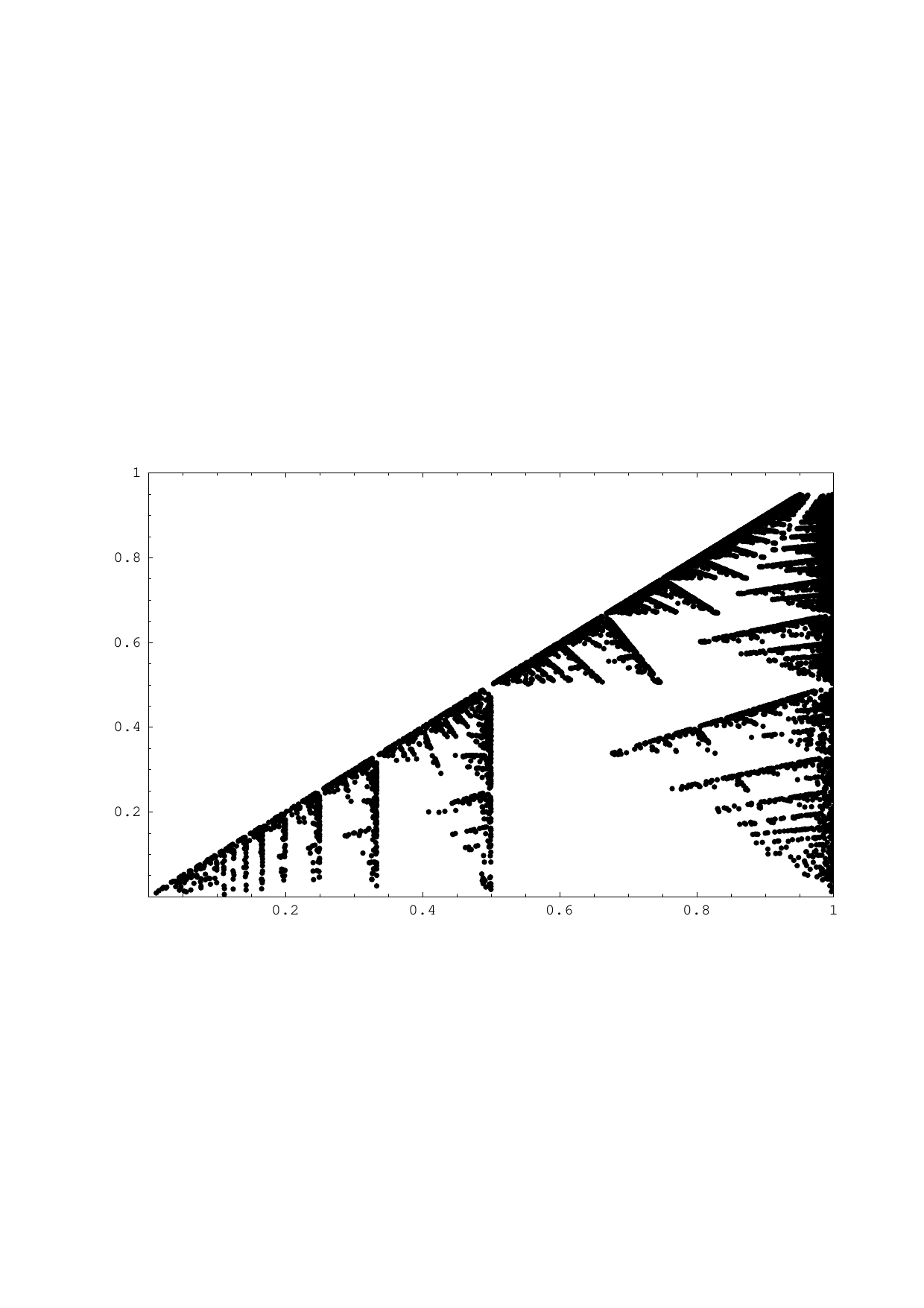}}}
 \end{picture}
\caption{Approximation of the set $\Omega_\infty$ of type infinity parameters with $10,000$ pixels.
}
\label{fig2}
\end{figure}

Symbolically, $T_{\alpha,\beta}$ is described by an $S$-adic subshift $(X,\sigma)$, based on a sequence of
substitutions $\chi_{k_i}$, $k_i \in \N = \{1,2,3,\dots\}$,
and we call $T_{\alpha,\beta}$ linearly recurrent if
the subshift $(X,\sigma)$ is, i.e., there is $L$ such that for every $x \in X$, every subword $w$
reappears in $x$ with gap $\leq L|w|$.
Theorem~\ref{thm:LR} gives a precise condition on the sequence $(k_i)_{i \in \N}$
that results in linear recurrence.

Unique ergodicity of $T_{\alpha,\beta}$ fails if the sequence $(k_i)$  increases exponentially fast, see \cite[Theorem 11]{BT}.
In this non-generic situation, there are two ergodic measures.
We can also conclude from its representation as S-adic shifts, that
$T_{\alpha,\beta}$ is never strongly mixing, see e.g.\ \cite[Theorem 6.79]{B23}.

Instead, the other property we investigate in this paper
 is weakly mixing of $T_{\alpha,\beta}$.
 %w.r.t.\ its unique invariant measure.
 General results in weak mixing for interval exchange transformations were obtained by Nogueira \& Rudolph \cite{NR97} (generic IETs),
Sina\u{\i} \& Ulcigrai \cite{SU05} (``periodic'' IETs)
and Avila \& Forni \cite{AF07} (Lebesgue typical IETs).
Recent extensions of these results can be found in e.g.\ \cite{AD16, AFS,  AL18}.
However, the relative simplicity of the family $\{ T_{\alpha,\beta}\}$ allows us to use methods from linear algebra (rather than results of Veech and Teichm\"uller theory)
combined with a general condition on Bratteli-Vershik systems for existence of eigenvalues of the Koopman operator.
This condition goes back to Host \cite{Host86}, and worked out in more generals settings in \cite{BCY, BDM05,BDM10,CDHM03,DFM19}.
 We prove that all ``periodic'' ITMs in our class (i.e., those for which the  corresponding sequence $(k_i)_{i\in\N}$ is (pre-)periodic)
as well as ``typical''  (in a sense made precise in Theorem~\ref{thm:wm_liminfk}) ITMs in our class are weakly mixing.

The paper is structured as follows. In Section~\ref{sec:LR} we characterize the linear recurrent ITMs by giving an if and only if condition on the index sequence $(k_i)_{i\in \N}$ of the substitutions in the S-adic representation.
Section~\ref{sec:WM} gives the results on weak mixing. When the S-adic representation is viewed as non-autonomous sequences of toral automorphisms $(A_k)_{k \geq 1}$, the condition
for the existence of an eigenvalue $e^{2\pi i \ev}$ of the Koopman operator is close (although not equivalent) to
the vector $\vec \ev := (\ev,\ev,\ev)\in \T^3$ belonging to the stable space $W^s(\vec 0)$ of $(A_k)_{k \geq 1}$.
In Section~\ref{sec:wm-Lyap} we show that $W^s(\vec 0)$ is one-dimensional,
so that the absence of eigenvalues becomes the generic situation.
Section~\ref{sec:wm-periodic} gives algebraic reasons why ITMs with  (pre-)periodic sequences $(k_i)_{i \in \N}$ are weak mixing.
Section~\ref{sec:wm-stable}  investigates the stable direction further, showing that it is uniquely determined by $(k_i)_{i \in \N}$.
In Section~\ref{sec:wm-continuous} we show that $\vec \ev \in W^s(\vec 0)$ is a necessary, although not a sufficient, condition for the existence of continuous eigenvalues.
Finally, in Section~\ref{sec:wm-measurable} we add the generic condition $\liminf_i  k_i < \infty$.
Under this condition the ITM is uniquely ergodic, and the Koopman operator has
no measurable eigenvalues if $\vec \ev \not\in W^s(\vec 0)$.

\section{Linear recurrence}\label{sec:LR}

We use symbolic dynamics w.r.t.\ the partition $\{ [0, 1-\alpha), [1-\alpha,1-\beta), [1-\beta,1]\}$,
with symbols $1,2,3$, respectively.
One renormalization step is  given by the substitution
\begin{equation}\label{eq:chi}
\chi_k: \begin{cases}
         1 \to 2\\
         2 \to 31^k\\
         3 \to 31^{k-1}
        \end{cases}
        \qquad \text{ for } k = \Big\lfloor \frac{1}{\alpha} \Big\rfloor \in \N.
\end{equation}
The associate matrix and its inverse are
$$
A_k = \begin{pmatrix}  0 & k & k-1 \\ 1 & 0 & 0 \\ 0 & 1 & 1    \end{pmatrix}
\qquad \text{ and }\qquad
A_k^{-1} =  \begin{pmatrix}  0 & 1 & 0 \\ 1 & 0 & 1-k \\ -1 & 0 & k  \end{pmatrix}.
$$
Note that $\det(A_k) = \det(A_k^{-1}) = -1$ and the characteristic polynomial of $A_k$ 
is $P_k(\lambda) = \lambda^3-\lambda^2-k\lambda+1$.

Every ITM of infinite type in this family is uniquely characterised by
a sequence $(k_i)_{i \in \N} \subset \N$ such that
\begin{equation}\label{eq:k2}
k_{2i} > 1 \text{ for infinitely many } i \in \N \text{ and } k_{2i-1} > 1 \text{ for infinitely many } i \in \N.
\end{equation}
The itinerary of the point $1 \in \left[0,1\right]$ is then
$$
\rho =\lim_{i\to\infty} \chi_{k_1} \circ \chi_{k_2} \circ \chi_{k_3} \circ \cdots \circ \chi_{k_i}(3).
$$
Let $X$ be the closure of $\{ \sigma^n(\rho) \}_{n \in \N}$ where $\sigma$ is the left-shift.
For such sequences, the attractor $\Omega$ of the ITM is a Cantor set, on which the action 
is isomorphic to an $S$-adic shift $(X,\sigma)$ with
associated matrices $A_{k_i}$.
In \cite{BT} conditions are given under which $(\Omega,T_{\alpha,\beta})$ is uniquely ergodic, see also Corollary~\ref{cor:EU}.
For the first result on linear recurrence for our family of ITMs, we need certain properties for the $S$-adic shift.

A $S$-adic subshift based on substitutions $\chi_i: \mathcal{A}_i \to \mathcal{A}^+_{i-1}$ is called {\em primitive} if for all $m\in\N$ there exists $n\geq m$ such that for all $a\in\mathcal{A}_n$
$$
	\chi_{m} \circ \cdots\circ \chi_n (a) \text{ contains every }
	b \in \mathcal{A}_{m-1}.
$$
The corresponding shift space $X$ is the shift orbit closure of the set of accumulation points of $\{ \chi_1 \circ \cdots\circ \chi_n (a) : n \in \N, a \in {\mathcal A}_n\}$.
A subshift is {\em aperiodic} if there exists no $x\in X$ such that $\sigma^k(x) = x$ for some $k\in\N$.

A substitution is called {\em left-proper}, if for all letters $a\in\mathcal{A}$ the word $\chi(a)$ has the same starting letter. While the substitutions $\chi_{k_i}$ based on an ITM of infinite type are not left-proper, any telescoping $\chi_k \circ \chi_{k_{i+1}}$ is, because
$$
\chi_{k_i} \circ \chi_{k_{i+1}}: \begin{cases}
	1 \to 31^{k_i} \\
	2 \to 31^{k_i-1}2^{k_{i+1}}\\
	3 \to 31^{k_i-1}2^{k_{i+1}-1}.
\end{cases}
$$
Thus $\rho$, the itinerary of the point $1 \in \left[0,1\right]$, is the unique one-sided fixed point under $(\chi_{k_{i}})_{i\in \N}$.

Next we need to show that our $S$-adic shift $(X,\sigma)$ is aperiodic.
In the literature one can find some results in this direction,
e.g., \cite[Lemma 3.3]{BCDLP} requires that the substitutions are unimodular, primitive and proper, where we only have left-proper.
The next lemma uses a weak form a recognizability,
which does hold in our case.

\begin{definition}\label{def:recog}
Let $(X, \sigma)$ be a subshift based on the substitution $\chi$.
We call $\chi$ {\em combinatorially recognizable}
if there is $N \in \N$ such that for every word $w$ in $X$
of length $|w| \geq N$ and every $v = v_1v_2v_3 \dots v_n$ such that $w$ appears twice in $\chi(v)$, then
the first appearance of $w$ starts at $\chi(v_1 \cdots v_i)$
if and only if the second appearance of $w$ starts at $\chi(v_1 \cdots v_j)$ for some $0 \leq i < j < n$.
(Here $i = 0$ means that $\chi(v)$ starts with $w$.)

The $S$-adic subshift $(X, \sigma)$ based on substitutions $(\chi_i)_{i\in\N}$ is recognizable if each $\chi_i$ is recognizable
(not necessarily with the same $N$).
\end{definition}

\begin{lemma}\label{lm:aperiodic}
	Let $(X, \sigma)$ be an injective, combinatorially recognizable $S$-adic subshift based on substitutions $\chi_i:\mathcal{A}_i \to \mathcal{A}^+_{i-1}$ such that for every $n \in \N$ there is $m > n$ such that
\begin{equation}\label{eq:increaselength}
 |\chi_n \circ \cdots \circ \chi_m(a)| > 1 \quad \text{ for every } a \in {\mathcal A}_m.
\end{equation}
Then $(X, \sigma)$ is aperiodic.
\end{lemma}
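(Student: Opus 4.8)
The plan is a classical infinite-descent argument on the minimal period of a putative periodic point, carried out along the desubstitution tower and made to work by combinatorial recognizability.

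First I would telescope: using \eqref{eq:increaselength}, choose $1 = m_0 < m_1 < m_2 < \cdots$ so that the substitutions $\psi_j := \chi_{m_{j-1}+1}\circ\cdots\circ\chi_{m_j}$ satisfy $|\psi_j(a)| \geq 2$ for every $a$. Since $(\psi_j)_j$ defines the same subshift $X$ and (as one checks by concatenating recognizability windows) is again injective and combinatorially recognizable, I may assume from the start that $|\chi_i(a)| \geq 2$ for all $i$ and all $a \in \mathcal{A}_i$. Next I would record the consequence of Definition~\ref{def:recog} that we actually use: combinatorial recognizability together with injectivity implies that every $x \in X$ admits a \emph{unique} desubstitution
\[
x = \sigma^k\chi_1(y), \qquad y \in X^{(1)}, \quad 0 \leq k < |\chi_1(y_0)|,
\]
where $X^{(1)}$ is the level-one $S$-adic subshift on $\mathcal{A}_1$ associated with $(\chi_i)_{i\geq2}$; here injectivity of $\chi_1$ on letters is what prevents two distinct choices of $y$ from producing the same $\chi_1$-image, and the recognizability constant $N$ is what pins down the cut positions.

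Now suppose, for contradiction, that $\sigma^p x = x$ for some $x \in X$, and take $p \geq 1$ minimal. Write $x = \sigma^k\chi_1(y)$ as above, and set $S_s := \sum_{i=0}^{s-1}|\chi_1(y_i)|$, a strictly increasing sequence with $S_0 = 0$ and $S_s \to \infty$. Let $s \geq 0$ be the unique index with $S_s \leq p+k < S_{s+1}$ and put $k' := p+k-S_s \in [0,|\chi_1(y_s)|)$. Applying $\sigma^p$ and using $\sigma^{S_s}\chi_1(y) = \chi_1(\sigma^s y)$ gives a second desubstitution of the same point,
\[
x = \sigma^p x = \sigma^{p+k}\chi_1(y) = \sigma^{k'}\chi_1(\sigma^s y).
\]
By uniqueness of the desubstitution of $x$ this forces $\sigma^s y = y$ and $k' = k$, hence $p = S_s = \sum_{i=0}^{s-1}|\chi_1(y_i)|$. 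As $p \geq 1$ we must have $s \geq 1$, and since each block has length $\geq 2$ we get $p = S_s \geq 2s$. Thus $y \in X^{(1)}$ is periodic, and its minimal period $p_1$ divides $s$, so $p_1 \leq s \leq p/2 < p$.

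The subshift $X^{(1)}$ satisfies the same hypotheses (injective, combinatorially recognizable, all substitutions length-expanding), so iterating the previous step produces periodic points $x = x_0 \in X$, $x_1 \in X^{(1)}$, $x_2 \in X^{(2)},\dots$ with strictly decreasing minimal periods $p = p_0 > p_1 > p_2 > \cdots \geq 1$, which is absurd; hence $X$ has no periodic point and $(X,\sigma)$ is aperiodic. I expect the only genuinely delicate point to be the one isolated above: upgrading the ``two occurrences inside a single $\chi(v)$'' formulation of Definition~\ref{def:recog} to the global uniqueness of the desubstitution of a bi-infinite word, and checking that this property and injectivity are inherited by the telescoped $\psi_j$. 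I would treat the first by taking $w$ to be a long factor of $x$ straddling any alleged point of disagreement between two factorizations and playing the two $\chi_1$-grids against each other on $w$ via the recognizability constant; everything after that is bookkeeping with the partial sums $S_s$.
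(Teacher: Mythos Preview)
Your argument is correct and follows essentially the same infinite-descent strategy as the paper: desubstitute a putative periodic point, obtain a periodic point one level up with strictly smaller period, and iterate to a contradiction. The only cosmetic difference is that you telescope at the outset so that every step already halves the period, whereas the paper keeps the original $\chi_i$'s, accepts $p_1 \leq p$ at each step, and invokes \eqref{eq:increaselength} only to force a strict drop after finitely many steps; your version is a little cleaner but not genuinely different, and the delicate point you flag (passing from Definition~\ref{def:recog} to unique global desubstitution) is treated just as informally in the paper.
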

\begin{proof}
	Assume by contradiction that $(X, \sigma)$ has a periodic element $x = \sigma^k \circ \chi(y) \in X$ with period $p$. It follows that $x=w^\infty$ with $w$ a word of length $p$. We can pick $k$ in such a way that $w^\infty$ starts with $\chi_1(a)$ for some $a\in\mathcal{A}_1$.
	As $\chi_{k_1}$ is recognizable and injective, every new occurrence of $w$ must start with $\chi_1(a)$.
	In fact, there is a unique word $a \cdots b \in {\mathcal A}_1^+$ such that
	$\chi_1(a \cdots b) = w$ and
	$$
	\chi^{-1}_1(x) 	= \chi^{-1}_1(w)^\infty = (a\cdots b)^\infty.
	$$
	This unique $\chi^{-1}_1(x)$ is again periodic with period $p_1 \leq p$. Using \eqref{eq:increaselength} and  taking preimages $\chi^{-1}_m \circ \cdots \circ\chi^{-1}_1(x)$ decreases the period: $p_m < p_1$. Since we can repeat this argument, even when the period is reduced to $1$, we get a contradiction. Hence all elements of $(X, \sigma)$ are aperiodic.
\end{proof}

\begin{prop}\label{prop:subshift}
The $S$-adic subshift $(X,\sigma)$, based on substitutions $(\chi_{k_i})_{i\in\mathbb{N}}$ from an ITM of infinite type, is
primitive, combinatorially recognizable and aperiodic.
\end{prop}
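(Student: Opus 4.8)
The plan is to establish the three properties in turn from the explicit shape of $\chi_k$ (and of the telescopings $\chi_{k_i}\circ\chi_{k_{i+1}}$), together with the standing hypothesis \eqref{eq:k2}; the only genuinely nontrivial point is primitivity.

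I would begin with combinatorial recognizability, which for these substitutions is almost immediate. The key observation is that every block $\chi_k(a)\in\{2,\ 31^k,\ 31^{k-1}\}$ begins with the letter $2$ or $3$, and the only other letter occurring anywhere is $1$. Hence in any image word $\chi_k(v)$ the letters $2$ and $3$ occur precisely at the block boundaries, and no $1$ ever occurs at a block boundary. Therefore an occurrence of a word $w$ inside $\chi_k(v)$ starts at a cut point if and only if $w$ begins with $2$ or $3$ — a property of $w$ alone — so the alignment condition of Definition~\ref{def:recog} holds (with $N=1$, say) for every $\chi_{k_i}$, i.e.\ $(X,\sigma)$ is combinatorially recognizable. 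The same block structure shows $\{2,31^k,31^{k-1}\}$ is a code (the length of the leading block is recovered by counting the $1$'s after the leading letter), so each $\chi_{k_i}$ is an injective morphism; this supplies the injectivity hypothesis of Lemma~\ref{lm:aperiodic}.

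Aperiodicity I would then deduce from Lemma~\ref{lm:aperiodic}, whose remaining hypothesis \eqref{eq:increaselength} I verify directly: each $\chi_k$ is non-length-decreasing, one has $|\chi_k(2)|=k+1\ge 2$ always, $|\chi_k(3)|=k\ge 2$ as soon as $k\ge 2$, and $\chi_k\circ\chi_{k'}(1)=31^k$ already has length $\ge 2$; so, given $n$, choosing $m>n$ with $k_m\ge 2$ (possible by \eqref{eq:k2}) makes $|\chi_{k_n}\circ\cdots\circ\chi_{k_m}(a)|>1$ for every letter $a$.

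For primitivity, fix $m$; the task is to find $n\ge m$ with $\chi_{k_m}\circ\cdots\circ\chi_{k_n}(a)$ containing all of $1,2,3$ for each $a$. I would track, descending from level $n$ (calling $\chi_{k_j}\circ\cdots\circ\chi_{k_n}(a)$ the level-$j$ word), which letters are present, via the three local rules read off from $\chi_{k_j}$: a $3$ occurs at level $j$ whenever a $2$ or a $3$ occurs at level $j+1$; a $2$ occurs at level $j$ exactly when a $1$ occurs at level $j+1$; and a $1$ occurs at level $j$ when either a $2$ occurs at level $j+1$, or a $3$ occurs at level $j+1$ and $k_j\ge 2$. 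The first rule already gives a $3$ at every level below $n$. The rule for the letter $1$ gives a parity recursion: if $k_j=1$ then "$1$ at level $j$" is equivalent to "$1$ at level $j+2$", whereas if $k_j\ge 2$ then "$1$ at level $j$" holds outright (since the level $j+1$ word contains a $3$). Thus "$1$ at level $j$" holds as soon as some $i\ge j$ with $i\equiv j\pmod 2$ has $k_i\ge 2$, and \eqref{eq:k2} guarantees such an $i$ in each residue class. Applying this at $j=m$ puts a $1$ at level $m$; applying it at $j=m+1$ puts a $1$ at level $m+1$, hence a $2$ at level $m$; taking $n$ a couple of steps past the two witnessing indices makes $1,2,3$ all appear at level $m$, for every $a$. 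This is primitivity, and by Proposition's statement it completes the proof.

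I expect the primitivity step to be the main obstacle, and the role of \eqref{eq:k2} to be its crux: were all sufficiently large odd-indexed (or all large even-indexed) $k_i$ equal to $1$, then starting from the letter $3$ the symbol $2$ would never be created, primitivity would fail, and indeed the fixed point $\rho$ would omit $2$ — impossible for a sequence arising from an ITM of infinite type. Turning this dichotomy into the clean parity statement above, and keeping the level-by-level letter bookkeeping straight, is the delicate part; recognizability and aperiodicity are comparatively routine once the block structure of $\chi_k$ is in hand.
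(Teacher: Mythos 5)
Your proof is correct. The recognizability argument (the letters $2$ and $3$ occur only at block boundaries, $1$ never does, so alignment of an occurrence of $w$ with a cut point depends only on the first letter of $w$) and the deduction of aperiodicity from Lemma~\ref{lm:aperiodic} coincide with the paper's; in fact you are more careful than the paper, which simply cites the lemma, in that you explicitly verify its hypotheses (injectivity of each $\chi_k$ via the code property of $\{2,31^k,31^{k-1}\}$, and condition \eqref{eq:increaselength} by composing down to a level $m>n$ with $k_m\ge 2$). Where you genuinely diverge is primitivity: the paper computes transition matrices explicitly --- products $A^{2r}$, $A^{2r-1}$ of blocks of $A_1$'s interleaved with matrices $A_k$, $k\ge 2$ --- and checks entry by entry that the product from one index with $k_i>1$ to the next one at odd distance, plus one more factor, is strictly positive. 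You instead track which letters occur in $\chi_{k_j}\circ\cdots\circ\chi_{k_n}(a)$ as $j$ decreases, via three local occurrence rules, and reduce everything to the parity recursion ``$1$ occurs at level $j$ iff it occurs at level $j+2$ when $k_j=1$, and occurs outright when $k_j\ge 2$ (once a $3$ is present one level up)''; condition \eqref{eq:k2} then supplies a witness $k_i\ge 2$ in each residue class, so that $1$, hence also $2$, hence all three letters appear at level $m$. The two arguments encode the same combinatorics --- positivity of the entries of the matrix product records exactly which letters occur in which images --- but yours is more transparent and isolates cleanly why \eqref{eq:k2} is the right hypothesis, whereas the paper's explicit matrix computation pays off later: the telescoped positive blocks \eqref{eq:tele} extracted from it are reused in the linear-recurrence and Lyapunov-exponent arguments.
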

\begin{proof}
	\textbf{Primitivity.}
	We prove that for any $i\in\N$ there exists $n_i$ such that the product of matrices associate to $\chi_{k_i}, \dots, \chi_{k_{i + n_i +1}}$ is strictly positive.
	
	We write $A_k$ for matrices with $k>1$. For odd or even length blocks of substitutions with $k_i = 1$ the product of associated matrices are 
	\begin{equation*}
		A^{2r-1} =	\begin{pmatrix}
			0 & 1 &  0\\
			1	& 0 & 0 \\
			r-1	& r & 1
		\end{pmatrix}, \ \ \ 
		A^{2r} =	\begin{pmatrix}
			1 	& 0 & 0\\
			0	& 1 & 0 \\
			r	& r & 1
		\end{pmatrix}.
	\end{equation*}
	
	Let $i \in \mathbb{N}$ be such that $k_i > 1$, as the ITM is of infinity type there exists an odd $m_i$ such that $k_{i+m_i} > 1$.
	
	In between $k_i$ and $k_{i+m_i}$ the positions of odd distance to $k_i$ are always equal to one, the even positions may take any value $k\geq 1$, i.e.,
	\begin{equation*}
		\left(k_i, 1, k_{i+2}, 1, \dots, 1, k_{i+2j}, 1, \dots, 1, k_{i+2n}, k_{i+m_i} \right).
	\end{equation*}
	Thus we can write the multiplication of matrices from $k_i$ to $k_{i+m_i}$ in the following way
	\begin{equation}\label{eq:p}
		A_{k_i}\left( \prod_{(k,r)} A^{2r-1} A_k \right) A^{2s} \ A_{k_{i+m_i}},
	\end{equation}
	where $s\geq 0$ and $(k,r)$ are the values of $k_j>1$ for $ i < j < i+m_i$ and $2r-1$ is the length of the chain of ones between the previous $k>1$ and $k_j$ with $r>0$.
	
	If the product in \eqref{eq:p} is empty, it is the identity matrix. If it contains only one pair $(k,r)$, it has the form
	\begin{equation*}
		\begin{pmatrix}
			0 & 1 &  0\\
			1	& 0 & 0 \\
			r-1	& r & 1
		\end{pmatrix}\begin{pmatrix}
			0 & k &  k-1\\
			1	& 0 & 0 \\
			0	& 1 & 1
		\end{pmatrix} = \begin{pmatrix}
			1 & 0 &  0\\
			0	& k & k-1 \\
			r	& (r-1)k + 1 & (r-1)(k-1)+1
		\end{pmatrix}.
	\end{equation*}
	Otherwise, if the product contains multiple such factors, we get
	\begin{align*}
		\prod_{(k,r)} A^{2r-1} A_k
		% &= \prod_{(k,r)}\begin{pmatrix}
			%	1 & 0 &  0\\
			%	0	& k & k-1 \\
			%	r	& (r-1)k + 1 & (r-1)(k-1)+1
			%\end{pmatrix} \\
			&= \prod_{(k,r)}\begin{pmatrix}
				1 & 0 &  0\\
				0	& \ast & \ast \\
				\ast	& \ast & \ast
			\end{pmatrix} = \begin{pmatrix}
				1 & 0 &  0\\
				\ast	& \ast & \ast \\
				\ast	& \ast & \ast
			\end{pmatrix},
		\end{align*}
		where the $\ast$ represent integers greater than zero. 
		% The $(2,1)$-entry is either equal to zero in case there is only one matrix in the product or it is greater then zero if there are multiple.
		As the identity matrix is minimal in each entry, we use it as the lowest possible bound for each entry in the matrix multiplication with $s> 0$
		\begin{align*}
			A_{k_i}\left( \prod_{(k,r)} A^{2r-1} A_k \right) A^{2s}
			%	&= 	\begin{pmatrix}
				%		0 & k_i &  k_i-1\\
				%		1	& 0 & 0 \\
				%		0	& 1 & 1
				%	\end{pmatrix} \begin{pmatrix}
				%	1 & 0 &  0\\
				%	{0, \ast} 	& \ast & \ast \\
				%	\ast	& \ast & \ast
				%\end{pmatrix} \begin{pmatrix}
				%1 & 0 &  0\\
				%0	& 1 & 0 \\
				%s	& s & 1
				%\end{pmatrix}\\
				&\geq %\begin{pmatrix}
				%	\ast & \ast &  \ast\\
				%	1	& 0 & 0 \\
				%	\ast	& \ast & \ast
				%\end{pmatrix}
				\begin{pmatrix}
					0 & k_i &  k_i-1\\
					1	& 0 & 0 \\
					0	& 1 & 1
				\end{pmatrix}\begin{pmatrix}
					1 & 0 &  0\\
					0	& 1 & 0 \\
					s	& s & 1
				\end{pmatrix}
				=\begin{pmatrix}
					\ast & \ast &  \ast\\
					1	& 0 & 0 \\
					\ast	& \ast & \ast
				\end{pmatrix}
			\end{align*}
			In case $s=0$, the result is $A_{k_i}$, which is the entry-wise lower possibility. Thus
			\begin{align*}
				A_{k_i}\left( \prod_{(k,r)} A^{2r-1} A_k \right) A^{2s} A_{k_{i+m_i}}
				%&= 	\begin{pmatrix}
				%	\ast & \ast &  \ast\\
				%	1	& 0 & 0 \\
				%	\ast	& \ast & \ast
				%\end{pmatrix}
				&\geq\begin{pmatrix}
					0 & k_i &  k_i-1\\
					1	& 0 & 0 \\
					0	& 1 & 1
					\end{pmatrix}
								\begin{pmatrix}
									0 & k_{i+m_i} &  k_{i+m_i}-1\\
									1	& 0 & 0 \\
									0	& 1 & 1
								\end{pmatrix}\\
								&=\begin{pmatrix}
									\ast & \ast &  \ast\\
									0	& \ast & \ast \\
									\ast	& \ast & \ast
								\end{pmatrix}.
							\end{align*}
							We see that the multiplication from $k_i$ to $k_{i+m_i}$ does not result in a full matrix. By multiplying one more step, independently of the value of $k_{i+m_i+1}$ (it can be 1), we get
							\begin{align*}
								\begin{pmatrix}
									\ast & \ast &  \ast\\
									0	& \ast & \ast \\
									\ast	& \ast & \ast
								\end{pmatrix}A_{k_{i+m_i+1}}&= \begin{pmatrix}
									\ast & \ast &  \ast\\
									0	& \ast & \ast \\
									\ast	& \ast & \ast
								\end{pmatrix}\begin{pmatrix}
									0 & k_{i+m_i+1} &  k_{i+m_i+1}-1\\
									1	& 0 & 0 \\
									0	& 1 & 1
								\end{pmatrix}
								= \begin{pmatrix}	
									\ast & \ast &  \ast\\
									\ast	& \ast & \ast \\
									\ast	& \ast & \ast
								\end{pmatrix}
							\end{align*}
			and thus we have a full matrix by multiplying $m_i+2$ steps together. 
			
	For an arbitrary position $j$ with $k_j \geq 1$ there exists $n_j \geq 0$ such that there is $j\leq i \leq j+n_j$ with $k_{i} > 1$ and $m_i$ odd with $k_{i+m_i} > 1$. Thus the matrix associated to the substitution $ \chi_{k_j}\circ\cdots\circ \chi_{k_{i +m_i+1}}$ is strictly positive. Therefore the $S$-adic shift is primitive.
			
\textbf{Combinatorial Recognizability.}
Let $(X_i, \sigma)$ be the $S$-adic subshift based on $(\chi_{k_j})_{j\geq i}$, we show that substitution $\chi_{k_i}$ is combinatorially recognizable. Take $x\in X_i$, there is a unique way to decompose $x$ into blocks $\chi_{k_i}(a)$ for $a\in \mathcal{A}$. Any 2 in $x$ is its own block $\chi_{k_i}(1)$. Further every 3 in $x$ starts a new block and depending on the number of 1s directly following, we can determine if the block is $\chi_{k_i}(2)$ or $\chi_{k_i}(3)$.  For example
\begin{align*}
x 	&= \dots \ \vert\ 2 \ \vert \ 3\ 1\ 1\ \vert\ 3\ 1\ \vert\ 2\ \vert \ 2\ \vert  \ 3\ 1\ 1\ \vert\ \dots \\
	&= \dots \chi_2(1)\ \chi_2(2)\ \chi_2(3)\ \chi_2(1)\ \chi_2(1)\ \chi_2(2) \dots
\end{align*}
For any word $w$ with $\lvert w\rvert 1$ and $v$ such that $w$ appears twice in $\chi_{k_i}(v)$, if $w$ starts with letters 2 or 3 we know that there exists $i<j$ with 
$$
	\chi_{k_i}(v) = \chi_{k_i}(v_1\cdots v_i)w\cdots = \chi_{k_i}(v_1\cdots v_j)w \cdots.
$$
If $w$ starts with letter 1, then there exists no such $i,j$.

Thus the substitution $\chi_{k_i}$ is combinatorially recognizable and therefore the $S$-adic subshift based by substitutions $(\chi_{k_i})_{i\in\mathbb{N}}$ is combinatorially recognizable.

\textbf{Aperiodicity.} By Lemma~\ref{lm:aperiodic} there exist no periodic elements in $(X,\sigma)$.	 		
\end{proof}

From this proof, it is clear that a general method of telescoping
the sequence $A_{k_n}$ to obtain full matrices $\tilde A_i$ is the following:

\begin{equation}\label{eq:tele}
 \tilde A_i = \underbrace{A_1 \cdots A_1}_{r_{i,1}}
 											\cdot A_{k_{i,1}} \cdot \underbrace{A_1 \cdots A_1}_{r_{i,2}}
 \cdots \cdots
 A_{k_{i,m}} \cdot \underbrace{A_1 \cdots A_1}_{r_{i,m+1}}
 \cdot A_{k_{i,m+1}} \cdot A_{k_{i,m+2}},
\end{equation}
where $k_{i,j} \geq 2$ for $1\leq j \leq m+1$, $ k_{i,m+2} \geq 1$ and $r_{i,j}$ odd for $2 \leq j \leq m$, $r_{i,1}\geq 0$ and $r_{i,m+1}$ even.

\begin{remark}
	If the lengths of gaps between $k_i>1$ and $k_{i+m_i}>1$ with $m_i$ odd, is bounded for all $i$, then the subshift is strongly primitive, i.e., there exists a constant $M > \max_i(m_i)$ such that the matrix associated to the substitution $\chi_{k_{i}}\circ\cdots\circ \chi_{k_{i+2M}}$ is strictly positive for all $i$.
\end{remark}

\begin{lemma}\label{lm:notLR}
	Let $(X,\sigma)$ be a primitive non-periodic $S$-adic shift,
	based on substitutions $\chi_i:\mathcal{A}_i \to \mathcal{A}^+_{i-1}$.
	If for every $k \in \mathbb{N}$ there are $i < j$ and $a \in \mathcal{A}_{i-1}$, $b \in \mathcal{A}_j$ such that $a^k$ is a
	subword of $\chi_i \circ \cdots \circ \chi_j(b)$,
	then $(X,\sigma)$ is not linearly recurrent.
\end{lemma}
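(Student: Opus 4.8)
The plan is to use the standard criterion that a linearly recurrent subshift has \emph{bounded} complexity growth, more precisely that for a linearly recurrent $(X,\sigma)$ with constant $L$ the return times to any cylinder are bounded in terms of $L$, and in particular the word complexity satisfies $p_X(n) \le L n$ for all $n$ (this is a theorem of Durand; alternatively one can argue directly via return words). The hypothesis supplies, for each $k$, a level-pair $i<j$ and letters $a \in \mathcal{A}_{i-1}$, $b\in\mathcal{A}_j$ with $a^k$ a subword of $\chi_i\circ\cdots\circ\chi_j(b)$. Since the $S$-adic shift is primitive, every letter of $\mathcal{A}_j$ occurs in some $\chi_{j+1}\circ\cdots\circ\chi_{j'}(c)$, and the composed substitution $\chi_i \circ \cdots \circ \chi_j$ is applied somewhere inside every sufficiently long word of $X$; hence $a^k$ actually occurs as a subword of some $x\in X$. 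Because $k$ is arbitrary, $X$ contains arbitrarily long powers $a^k$ of a single (though $k$-dependent) letter $a$ of the base alphabet $\mathcal{A}_0$ — note $\mathcal{A}_0$ is finite, so after passing to a subsequence of $k$'s we may fix one letter $a\in\mathcal{A}_0$ occurring with arbitrarily high power.

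First I would record the consequence of linear recurrence I intend to contradict: if $(X,\sigma)$ is linearly recurrent with constant $L$, then no word of the form $u^m$ with $|u^m| \ge$ (something like) $L\cdot|u|$ and $m$ large can appear, because a long power $u^m$ forces the word $u$ to reappear with gap exactly $|u|$, arbitrarily many times in a row, which is incompatible with aperiodicity once $m$ exceeds a bound depending only on $L$ and $|u|$ — indeed in a linearly recurrent aperiodic subshift the maximal power of any word $u$ that occurs is at most $L+1$ (one shows: if $u^{L+2}$ occurred, then $u$ is a return word to itself of length $|u| \le |u^{L+2}|/(L+1) < |u^{L+2}|/L$, and linear recurrence would then propagate periodicity to all of $X$, contradicting non-periodicity). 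Taking $u=a$ a single letter, this says: there is $N=N(L)$ with $a^N$ not a subword of any $x\in X$, for every letter $a$.

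Then the contradiction is immediate: choose $k = N$ in the hypothesis to get $a^N$ a subword of $\chi_i\circ\cdots\circ\chi_j(b)$; by primitivity (applying $\chi_1\circ\cdots\circ\chi_{i-1}$ on the left and using that $b$ appears in $\chi_{j+1}\circ\cdots\circ\chi_{j'}(c)$ for suitable $j'$ and some $c$, with $c$ itself occurring in the shift orbit closure) the block $a^N$ propagates to an honest subword of some point of $X$, contradicting the previous paragraph. So $(X,\sigma)$ is not linearly recurrent.

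The main obstacle I expect is the ``propagation'' step — making rigorous that a subword of $\chi_i\circ\cdots\circ\chi_j(b)$ for a single letter $b\in\mathcal{A}_j$ is genuinely realized as a subword of a point of the shift space $X$, not just of some finite concatenation. This is exactly where primitivity is needed: primitivity guarantees that for $m$ large every $\chi_{j+1}\circ\cdots\circ\chi_m(c)$ contains $b$, hence $\chi_1\circ\cdots\circ\chi_m(c)$ contains $\chi_1\circ\cdots\circ\chi_j(b)$, hence contains $a^N$; letting $m\to\infty$ along a subsequence, the accumulation points of $\{\chi_1\circ\cdots\circ\chi_m(c)\}$ lie in $X$ and contain $a^N$. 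The only care needed is to keep the occurrence of $a^N$ ``interior'' so that it survives in the limit, which is automatic once $m$ is large enough that $|\chi_{j+1}\circ\cdots\circ\chi_m(c)| \ge 3$, placing the copy of $b$ — and thus of $a^N$ — strictly inside. The second, minor, obstacle is the elementary lemma bounding the maximal power of a word in an aperiodic linearly recurrent subshift; this is well known (see e.g.\ Durand's work on linearly recurrent systems), and I would cite it rather than reprove it, or give the two-line return-word argument sketched above.
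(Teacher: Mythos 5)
There is one concrete misstep in your proposal: the reduction to ``arbitrarily long powers $a^k$ of a single letter $a$ of the base alphabet $\mathcal{A}_0$.'' The hypothesis places $a$ in $\mathcal{A}_{i-1}$, where the level $i$ depends on $k$, so what actually lands in the language of $X$ (which lives over $\mathcal{A}_0$) is not $a^k$ but $w^k$ with $w=\chi_1\circ\cdots\circ\chi_{i-1}(a)$, a word whose length may grow without bound as $k$ (hence $i$) grows. Your sentence ``hence $a^k$ actually occurs as a subword of some $x\in X$'' is therefore false in general, and so is the subsequent pigeonhole over the finite alphabet $\mathcal{A}_0$. This is not a cosmetic point: in the very application of this lemma in Theorem~\ref{thm:LR}, the non-LR examples coming from unbounded gaps in $\{i:k_i>1\}$ produce high powers of the letter $3$ only at deeper and deeper levels; at level $0$ these are high powers of longer and longer words, and no single letter of $\mathcal{A}_0$ need appear with unbounded power.

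The gap is easily repaired, because the power-freeness fact you invoke (in an aperiodic linearly recurrent subshift with constant $L$, no word $u^{L+2}$ occurs) applies to arbitrary words $u$, not just letters; applying it to $u=w$ finishes the argument, and your ``propagation'' step via primitivity correctly places $w^{L+2}$ inside a point of $X$. With that correction your route is sound but leans on the cited Durand-type lemma, whereas the paper's proof is self-contained: it observes that aperiodicity forces $p_x(|w|)>|w|$, while $w^{L+2}$ has at most $|w|$ distinct subwords of length $|w|$, so some word $v$ with $|v|=|w|$ occurs in $x$ (by primitivity, syndetically) yet cannot start anywhere inside an occurrence of $w^{L+2}$, producing a return gap $\geq L|v|$ and contradicting $L$-linear recurrence directly. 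Either way, please restate the key step in terms of $w^k$ rather than $a^k$.
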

\begin{proof}
	Let $L \in \mathbb{N}$ be arbitrary and find $i < j$ and $b \in \mathcal{A}_j$
	such that $\chi_i \circ \cdots \circ \chi_j(b)$ has $a^{L+2}$ as subword.
	Set $w = \chi_1 \circ \cdots \circ \chi_{i-1}(a)$; then $X$ contains a sequence $x$ having $w^{L+2}$ as subword.
	Since $x$ is not periodic, the word-complexity $p_x(|w|) > |w|$, and hence there is
	a word $v$ of length $|v| = |w|$ that is not a subword of $w^{L+2}$.
	By primitivity, $v$ and $w^{L+2}$ appear infinitely often in $x$.
	But then $v$ must have reappearances with gap $\geq L |v|$.
	Hence $(X,\sigma)$ is not $L$-linearly recurrent.
	As $L$ was arbitrary, the lemma follows.
\end{proof} 

By Theorem 5.4 in \cite{BCY}, a $S$-adic subshift based on recognizable substitutions is linearly recurrent if and only if we can telescope the substitutions $(\chi_{k_i})_{i\geq 1}$ into finitely many, left-proper substitutions with strictly positive transition matrices.

\begin{thm}\label{thm:LR}
 The subshift $(X,\sigma)$ associated to an ITM $(\Omega,T_{\alpha,\beta})$ of infinite type is linearly recurrent
 if and only if $(k_i)_{i \in \N}$ is bounded and the sets
 $\{ i : k_{2i} > 1\}$ and $\{ i : k_{2i-1} > 1\}$  have bounded gaps.
\end{thm}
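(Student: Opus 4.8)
The plan is to deduce the theorem from the criterion of \cite[Theorem 5.4]{BCY} quoted above — applicable because by Proposition~\ref{prop:subshift} our subshift is combinatorially recognizable — namely that $(X,\sigma)$ is linearly recurrent if and only if $(\chi_{k_i})_{i\geq 1}$ can be telescoped into finitely many left-proper substitutions with strictly positive transition matrices; recall also that every composition of two or more of the $\chi_{k_i}$ is left-proper (all its image words begin with $3$), as noted before Proposition~\ref{prop:subshift}.

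For the "if" direction, suppose $(k_i)$ is bounded by $K$ and both $\{i:k_{2i}>1\}$ and $\{i:k_{2i-1}>1\}$ have bounded gaps. I would first check that this implies the hypothesis of the Remark following \eqref{eq:tele}: from any position one reaches, within a bounded number of steps, an index $i$ with $k_i>1$, and then the nearest index of the opposite parity carrying a value $>1$ lies within a bounded distance, giving a bounded odd $m_i$ with $k_{i+m_i}>1$. Hence the subshift is strongly primitive, so there is $M$ with $\chi_{k_i}\circ\cdots\circ\chi_{k_{i+2M}}$ strictly positive for all $i$. Telescoping $(\chi_{k_i})$ into consecutive blocks of length $2M+1$ produces at most $K^{2M+1}$ distinct substitutions, each strictly positive and (being a composition of $\geq 2$ of the $\chi_{k_i}$) left-proper; by \cite[Theorem 5.4]{BCY}, $(X,\sigma)$ is linearly recurrent.

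For the "only if" direction I argue by contraposition, in two cases. If $(k_i)$ is unbounded, then for every $k$ choose $i$ with $k_i\geq k$; since $\chi_{k_i}\circ\chi_{k_{i+1}}(1)=\chi_{k_i}(2)=31^{k_i}$ contains $1^k$, Lemma~\ref{lm:notLR} (whose hypotheses hold by Proposition~\ref{prop:subshift}) shows $(X,\sigma)$ is not linearly recurrent. If $(k_i)$ is bounded but, say, $\{i:k_{2i}>1\}$ has unbounded gaps (the other case being symmetric), suppose towards a contradiction that $(X,\sigma)$ is linearly recurrent. By \cite[Theorem 5.4]{BCY} we may telescope along $0=i_0<i_1<\cdots$ into finitely many left-proper substitutions with strictly positive matrices; since there are finitely many of them their images have bounded length, and as $A_k\geq A_1$ entrywise the image of the letter $1$ under the block $[i_{n-1}+1,i_n]$ has length at least $(i_n-i_{n-1})/2$, so the block lengths are bounded by some $C$. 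Unbounded gaps in $\{i:k_{2i}>1\}$ yield arbitrarily long windows of consecutive positions on which every even-indexed position carries the value $1$; a window longer than $2C$ contains a complete telescoping block $[i_{n-1}+1,i_n]$, whose matrix is therefore a product $A_{k_{i_{n-1}+1}}\cdots A_{k_{i_n}}$ in which every factor at an even position equals $A_1$.

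The remaining — and, I expect, hardest — point is the linear-algebra lemma that such a product is never strictly positive: \emph{if $a<b$ and $k_j=1$ for all $j$ of one fixed parity in $[a,b]$, then $A_{k_a}\cdots A_{k_b}$ has a row equal to a standard basis vector.} The proof rests on the identities $(0,1,0)\,A_mA_1=(0,1,0)$ and $(0,1,0)\,A_m=(1,0,0)$ (valid for all $m\geq 1$) and on the first row of $A_1$ being $(0,1,0)$: writing the product as an alternating chain of blocks $A_mA_1$ with at most one extra factor $A_1$ on the left and/or one extra factor $A_m$ on the right, as dictated by the parities of $a$ and $b$, one tracks the second, respectively first, row through the four resulting cases and finds it equal to $(0,1,0)$ or $(1,0,0)$; the opposite-parity statement is symmetric, using $(1,0,0)\,A_1A_m=(1,0,0)$. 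This contradicts positivity of the telescoping block and finishes the case. Besides this lemma, the only other delicate step is the passage, in the bounded case, from finitely many telescoped substitutions to bounded block length and hence to a complete block sitting inside a long "constant on even positions" window; the "if" direction and the unbounded case are routine given the Remark, Lemma~\ref{lm:notLR} and \cite[Theorem 5.4]{BCY}.
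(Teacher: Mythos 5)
Your proof is correct, and while the ``if'' direction and the unbounded-$(k_i)$ case coincide with the paper's argument (strong primitivity with a uniform window $2M+1$ plus finiteness of the alphabet of telescoped substitutions; power words $1^{k_n}$ fed into Lemma~\ref{lm:notLR}), you handle the remaining case by a genuinely different route. The paper disposes of unbounded gaps by noting that a long run of consecutive $1$'s in $(k_i)$ produces $\chi_1^{r_n}(2)=3^{l_n}1$ or $3^{l_n}2$, and again invokes Lemma~\ref{lm:notLR}; but that argument, as written, only applies when $\{i:k_i>1\}$ itself has unbounded gaps, and does not directly cover the mixed case where, say, $\{i:k_{2i}>1\}$ has unbounded gaps while every odd position carries a value $>1$ (then no long block $\chi_1^{r}$ ever occurs in the composition, and no power word is produced). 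Your argument — using the ``only if'' half of \cite[Theorem 5.4]{BCY} to extract a telescoping into finitely many strictly positive substitutions, bounding the block lengths via $A_k\geq A_1$ and $|\chi_1^{2r}(1)|=r+1$, and then showing with the identities $(0,1,0)A_m=(1,0,0)$ and $(1,0,0)A_1=(0,1,0)$ that any product with $A_1$ at every position of one parity has a standard basis vector as a row, hence is never strictly positive — closes exactly this case. What the paper's route buys is a direct, quantitative witness of non-linear-recurrence (an explicit word with long return time) without appealing to the converse direction of the BCY criterion; what your route buys is uniform coverage of all three failure modes, including the parity-specific one, at the cost of leaning on \cite[Theorem 5.4]{BCY} in both directions. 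Both your matrix lemma and the length lower bound for the telescoped blocks check out.
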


\begin{proof}
Let $M$ and $N$ be the maximal gap size of sets $\{i : k_{2i-1}>1\}$ and $\{i : k_{2i}>1 \}$ respectively. 
Every $M$-gap (i.e., $(k_n)_{n=i+1}^M$ for any $i\in\N$) contains at least one $k_{2i-1}>1$ and every $N$-gap at least one $k_{2i}>1$. Then there exists $\tilde{M} > \max\{M, N\}$ where for any $k_i\geq1$ there exists $k_{i + 2j-1} > 1$ and $k_{i + 2l} > 1$ for $ j,l < \tilde{M}$. %$\tilde{M}$ is the maximal distance between any $k_i >1$ and the next $k_j>1$, such that the parity of $j$ and $l$ is different, is bounded. We know that $\tilde{M}$ is the length of the maximal gap between $k_i > 1$ and $k_j>1$ with $i,j$ having a different parity.
Thus as in the proof of Proposition~\ref{prop:subshift} for any $i \in \mathbb{N}$ multiplying $2\tilde{M} + 1$ matrices together from $k_i$ to $k_{i+2\tilde{M}}$ will result in a full matrix. As there are only finitely many values $k_j \in \mathbb{N}$ can take, the set of full associated matrices is finite and the subshift  $(X, \sigma)$ is linearly recurrent.\\

On the other hand if $(k_n)_n$ unbounded we have that for any $L \in \mathbb{N}$ there exists $n\in\mathbb{N}$ with $k_n > L$ and therefore
$$
\chi_{k_n}(2) = 31^{k_n}
$$
contains the subword $1^{k_n}$.

Similarly if the gaps of $\{i : k_i > 1\}$ are unbounded, for any $L \in \mathbb{N}$ there exists a gap of length $r_n > 2L$ and
$$
\chi_{1}^{r_n}(2) = \left\{
\begin{array}{ll}
	3^{l_n}1 & \text{for $r_n = 2l_n-1$,} \\
	3^{l_n}2 & \text{for $r_n = 2l_n$} \\
\end{array}
\right. 
$$
contains the subword $3^{l_n}$.
By Lemma~\ref{lm:notLR} the subshift $(X,\sigma)$ is not linearly recurrent.
\end{proof}

\section{Weak mixing}\label{sec:WM}

In this section we  analyse the dynamics of the matrix products
$A_{k_1} \cdot A_{k_2} \cdots A_{k_i}$, $i \to \infty$, and ultimately related this to conditions under which the corresponding ITM is weak mixing or not.

\subsection{Lyapunov exponents}\label{sec:wm-Lyap}
Because the matrices $A_{k_i}$ have determinant $-1$, they can be viewed as automorphisms
$M_k:\T^3 \to \T^3$ of the $3$-torus $\T^3$, given by $M_k\vec x = \vec x A_k \bmod 1$.
Condition~\eqref{eq:wm2} can then be interpreted as that the line in $\T^3$ spanned by $(1,1,1)$ intersects the stable direction of $(0,0,0)$ for the infinite system
$( M_{k_i} \circ \cdots \circ M_{k_2} \circ M_{k_1})_{i \geq 1}$ in $\T^3$.
This explains the need of determining hyperbolicity and computing the dimension of the stable direction of this system in the first place.

\begin{prop}\label{prop:AB}
 For every sequence $(k_i)_{i \in \N}$ satisfying \eqref{eq:k2},
 the infinite matrix multiplication
$A_{k_1} \cdot  A_{k_2} \cdots$
has two positive and one negative Lyapunov exponent.
\end{prop}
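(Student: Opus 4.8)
The plan is to exhibit the three Lyapunov exponents by a combination of positivity (Perron--Frobenius type) arguments and a symmetry/duality argument that forces the remaining two exponents to have opposite signs. First I would invoke the telescoping from Proposition~\ref{prop:subshift}: condition \eqref{eq:k2} guarantees that we may group the $A_{k_i}$ into blocks $\tilde A_1, \tilde A_2, \dots$ as in \eqref{eq:tele}, each of which is a strictly positive integer matrix, and the cocycle generated by the $\tilde A_i$ has the same Lyapunov spectrum (up to the positive scaling by block lengths, which does not affect signs) as the original one. By the Perron--Frobenius theorem each $\tilde A_i$ has a simple dominant positive eigenvalue with positive eigenvector, and a standard Furstenberg--Birkhoff-type argument (or the fact that the positive cone is mapped strictly inside itself) shows the cocycle $(\tilde A_i)$ has a one-dimensional fastest Oseledets subspace with a strictly positive top exponent $\lambda_1 > 0$; this gives the first positive exponent.

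Next I would pin down the sign of the \emph{smallest} exponent. Since $\det A_k = -1$, the product of the three exponents is $0$ in the sense that $\lambda_1 + \lambda_2 + \lambda_3 = \lim \frac1n \log|\det(A_{k_1}\cdots A_{k_n})| = 0$. So it suffices to show $\lambda_3 < 0$ strictly, equivalently $\lambda_1 > -\lambda_3$ in absolute comparison is not needed; rather, I need $\lambda_3 < 0$ and $\lambda_2 > 0$. For $\lambda_3$ I would apply the positivity argument to the \emph{inverse/transpose} cocycle: the matrices $A_k^{-1}$ are not positive, but I would instead look at the action on the second exterior power, $\wedge^2 A_k$, whose entries (being $2\times 2$ minors of $A_k$) one computes to be nonnegative, and after the same telescoping become strictly positive. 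Perron--Frobenius applied to $(\wedge^2 \tilde A_i)$ gives a strictly positive top exponent for that cocycle, which equals $\lambda_1 + \lambda_2$; combined with $\lambda_1+\lambda_2+\lambda_3 = 0$ this yields $\lambda_3 = -(\lambda_1+\lambda_2) < 0$. Then I still need to separate $\lambda_2$ from $0$: for this I would argue that $\lambda_1 + \lambda_2 > \lambda_1$ strictly, i.e.\ that the top exponent of the $\wedge^2$ cocycle strictly exceeds that of the original cocycle. This should follow from simplicity of $\lambda_1$ together with the fact that the $\wedge^2$ cocycle is not cohomologous to a multiple of the first cocycle's top behavior — concretely, one can check on a single block $\tilde A_i$ that $\rho(\wedge^2 \tilde A_i) > \rho_{\text{second-largest}}$ is bounded below uniformly, using that the second eigenvalue of each positive $\tilde A_i$ has modulus bounded away from $1$ relative to the top one. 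Alternatively, since $\lambda_2 + \lambda_3 = -\lambda_1 < 0$ and one already knows $\lambda_3 < 0$, one gets $\lambda_2 < \lambda_1$ for free; to get $\lambda_2 > 0$ I would show $\lambda_3 < \lambda_2$ strictly, i.e.\ the bottom exponent is \emph{simple}, which again follows from strict positivity of the $\wedge^2$ blocks (giving simplicity of the top $\wedge^2$ exponent, hence of $\lambda_1$ and separately forcing a gap at the bottom) — more carefully, simplicity of $\lambda_1+\lambda_2$ for the $\wedge^2$ cocycle translates to $\lambda_1 + \lambda_2 > \lambda_1 + \lambda_3$, i.e.\ $\lambda_2 > \lambda_3$, so the three exponents are distinct: $\lambda_1 > \lambda_2 > \lambda_3$ with $\lambda_1 > 0$, $\lambda_3 < 0$ and $\lambda_1 + \lambda_2 + \lambda_3 = 0$, which forces $\lambda_2$ and at least one of the others to straddle $0$; combined with $\lambda_2 < \lambda_1$ and $\lambda_2 > \lambda_3$ and the sum being zero, the only consistent option with $\lambda_3<0<\lambda_1$ is indeed $\lambda_2 > 0$ unless $\lambda_2 = 0$ — so the genuine remaining task is excluding $\lambda_2 = 0$.

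The main obstacle, then, is ruling out the degenerate case $\lambda_2 = 0$ (equivalently $\lambda_1 = -\lambda_3$). I expect this to require more than soft positivity: one needs a quantitative lower bound showing the top exponent of the $\wedge^2$ cocycle is \emph{strictly} larger than the top exponent of the original cocycle. The cleanest route is probably to use that both cocycles are \emph{uniformly} hyperbolic along the telescoped blocks when the gap structure is bounded, and to handle the general (unbounded-gap) case by a Birkhoff-cone / positivity-of-Lyapunov-exponent criterion: since infinitely many blocks are strictly positive with uniformly bounded entries relative to their size in a way that gives a uniform contraction of the Hilbert projective metric, the second exponent of the $\wedge^2$ cocycle is strictly dominated, giving simplicity there, which back-substitutes to $\lambda_2 > \lambda_3$ and then, with the trace-zero relation and $\lambda_1 > 0 > \lambda_3$, I would close the argument by ruling out $\lambda_2 = 0$ via an explicit irrationality/algebraicity observation: for a periodic sequence $\lambda_2 = 0$ would force the characteristic polynomial $P_k(\lambda) = \lambda^3 - \lambda^2 - k\lambda + 1$ of the relevant product to have a root on the unit circle, which a direct check (the product of such polynomials cannot have a unimodular root for $k \geq 1$, or the relevant Salem/Pisot dichotomy) excludes; the aperiodic case then follows by an approximation/semicontinuity argument on the exponents. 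I would present the positivity steps in full and flag the $\lambda_2 \neq 0$ step as the technical heart, citing the standard cone-contraction criterion for strict positivity of the gap.
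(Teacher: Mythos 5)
Your first step ($\lambda_1>0$ via strict positivity of the telescoped blocks) matches the paper, and your reduction of $\lambda_3<0$ to positivity of the top exponent of a dual cocycle is the right idea in spirit. But two things go wrong. First, the claim that the $2\times 2$ minors of $A_k$ are nonnegative is false: the $(e_1\wedge e_2,\,e_1\wedge e_2)$ entry of $\wedge^2A_k$ is $\det\left(\begin{smallmatrix}0&k\\1&0\end{smallmatrix}\right)=-k$, and no diagonal sign change of basis can fix a negative diagonal entry. The paper instead observes that each $A_k^{-1}$ preserves the cone $Q^-=\{x_1,x_2\geq 0\geq x_3\}$ and conjugates by the signed permutation $U$ of \eqref{eq:UB} to obtain genuinely nonnegative matrices $B_k$ whose telescoped blocks are strictly positive; this is the correct repair of your $\wedge^2$ step and is equivalent to it, since $\wedge^2$ of a $3\times3$ unimodular matrix is conjugate to $\pm$ its inverse transpose.

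The more serious gap is exactly the step you flag: excluding $\lambda_2=0$. Neither of your proposed routes works. Per-block spectral gaps of the individual $\tilde A_i$ do not control the Lyapunov exponents of the infinite product (eigenvalues of factors say nothing about singular values of products), and the blocks are not uniformly bounded when $\sup_i k_i=\infty$. And Lyapunov exponents are not continuous (nor usefully semicontinuous) under approximation of an aperiodic sequence by periodic ones, so the algebraicity argument for periodic products --- which is essentially the paper's separate Theorem~\ref{thm:periodic} --- cannot be transferred to the general case ``by semicontinuity''. What the paper actually does is prove the quantitative entrywise comparison \eqref{eq:AB}, namely $\sup_{u,v}(\bA^n)_{u,v}<\inf_{u,v}(\bB^n)_{u,v}$ and in fact $(1+\eps)\bA^n<\bB^n$ blockwise, by a weight-preserving bijection of paths in the transition graphs of $A_k$ and $B_k$ (which differ only in swapping the multiplicities of the $1\to2$ edge and the $3\to3$ loop); the key identity is the induction \eqref{eq:wAB} showing the $A$-weights sum to $k_2k_4\cdots k_{2n}-1$ while the $B$-weights are at least as large, strictly so whenever some odd-indexed $k_{2j+1}\geq 2$, which \eqref{eq:k2} guarantees infinitely often. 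This gives $(1+\eps)^n\lambda_1(\bA^n)<\lambda_3(\bA^n)^{-1}$, hence $\limsup_n\frac1n\bigl(\log\lambda_1(\bA^n)+\log\lambda_3(\bA^n)\bigr)\leq-\log(1+\eps)<0$, and with $\det=\pm1$ this forces $\liminf_n\frac1n\log|\lambda_2(\bA^n)|>0$. This comparison argument is the missing idea; without it, or a substitute of equal quantitative strength, your proof does not close.
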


In fact, we won't prove the existence of these Lyapunov exponents as limits,
but there are independent vectors $\vec v_1, \vec v_2, \vec v_3 \in \R^3$
and $0 < \lambda_3  <  1 < -\lambda_2 < \lambda_1$
%with $\lambda_1\lambda_2\lambda_3 = \det(A_k) = -1$
and $C > 0$ such that for all $n$,
\begin{equation}\label{eq:ev}
\| \vec v_i \bA^n\|
\geq C |\lambda_i|^n, \  i \in \{1,2\},
\quad \text{ and }\quad
\| \vec v_3 \bA^n\| \leq C^{-1} \lambda_3^n.
\end{equation}
where we write $\bA^n = \tilde A_1 \cdot \tilde A_2 \cdots \tilde A_n$ for the telescoped strictly positive matrices $\tilde A_i$  as outlined in \eqref{eq:tele}.
\\[3mm]
\begin{proof}
Let $\lambda_i(\bA^n)$, $i=1,2,3$, indicate the eigenvalues of $\bA^n$. Their product is $\pm 1$, because $\det(A_k) = -1$ for all $k$.
Each $A_k$ preserves the positive octant.
$Q^+ = \{ x \in \R^3 : x_i \geq 0\}$.  As in (the proof of) the Perron-Frobenius Theorem, % (see \cite{KH}),
 this leads to a positive leading Lyapunov exponent
$\liminf \frac1n \log \lambda_1(\bA^n) > 0$ and corresponding unstable space $E_1$ through the interior of $Q^+$.

The octant $Q^- = \{ x \in \R^3 : x_1,x_2 \geq 0 \geq x_3\}$
is preserved by each inverse matrix $A_k^{-1}$.
A change of coordinates changes $Q^-$ into $Q^+$. Indeed, take
\begin{equation}\label{eq:UB}
U := \begin{pmatrix}
  0 & 1 & 0 \\ 1 & 0 & 0 \\ 0 & 0 & -1
 \end{pmatrix}
 \quad \text{ and }\quad
 B_{k_i} = U A_{k_i}^{-1} U^{-1} =
 \begin{pmatrix}
  0  & 1 & k-1\\ 1 & 0 & 0 \\  0 & 1 & k
 \end{pmatrix}.
\end{equation}
Let $\bB^n = \tilde B_n \cdots \tilde B_2 \cdot \tilde B_1$, where the $\tilde B_i$'s are
blocks of matrices $B_{k_j}$ telescoped in exactly the same way (but in the other direction) as the $A_{k_i}$'s.
As will be shown in the proof of Lemma~\ref{lem:unique}  all the $\tilde B_i$'s
are strictly positive.

Then the Perron-Frobenius Theorem applies and we obtain
$\liminf_n \frac1n \log \lambda_3(\bB_n) > 0$.
This gives a negative $\limsup_n \frac1n \log \lambda_3(\bA_n) < 0$.

% Now for the remaining Lyapunov exponent, we need to show that $\liminf_n \frac1n \log |\lambda_2(A^n)| > 0$.
 Since $\det(\bA^n) = \pm 1$, we have
 $\log \lambda_1(\bA^n) + \log |\lambda_2(\bA^n)| + \log \lambda_3(\bA^n) = 0$ for every $n \in \N$.
 Therefore, if we can prove that  $\limsup_n \frac{1}{n} \left( \log \lambda_1(\bA^n)  + \log \lambda_3(\bA^n) \right) < 0$, then it follows that $\liminf_n \frac1n \log |\lambda_2(\bA^n)| > 0$.

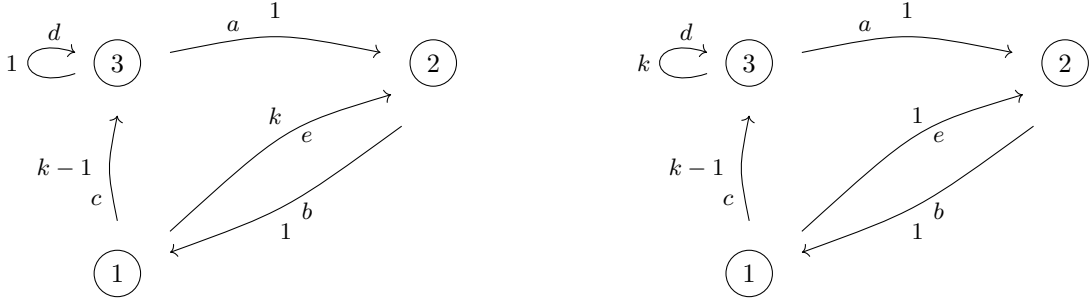
\begin{figure}[ht]
\begin{center}
\begin{tikzpicture}[scale=1.4]
\node[circle, draw=black] at (1,0.5) {1};
  \draw[->] (1,1) .. controls (0.9, 1.5) .. (1,2); \node at (0.5,1.5) {\small $k-1$};
   \node at (0.8,1.2) {\small $c$}; \node at (6.8,1.2) {\small $c$};
\node[circle, draw=black] at (4,2.5) {2};
  \draw[->] (3.7,1.9) .. controls (2.6, 1.1) .. (1.5,0.7); \node at (2.6,0.9) {\small $1$};
  \node at (2.8,1.1) {\small $b$}; \node at (8.8,1.1) {\small $b$};
   \draw[<-] (3.6,2.2) .. controls (2.6, 1.9) .. (1.5,0.9);  \node at (2.5,2) {\small $k$};
    \node at (2.8,1.8) {\small $e$}; \node at (8.8,1.8) {\small $e$};
\node[circle, draw=black] at (1,2.5) {3};
  \draw[->] (1.5,2.6) .. controls (2.5, 2.8) .. (3.5,2.6);  \node at (2.5,3) {\small $1$};
   \node at (2.1,2.85) {\small $a$}; \node at (8.1,2.85) {\small $a$};
  \draw[->] (0.6,2.4) .. controls (0,2.2) and (0,2.8) .. (0.6,2.6);  \node at (0,2.5) {\small $1$};
  \node at (0.4,2.8) {\small $d$};\node at (6.4,2.8) {\small $d$};
 %%%%%%
\node[circle, draw=black] at (7,0.5) {1};
  \draw[->] (7,1) .. controls (6.9, 1.5) .. (7,2); \node at (6.5,1.5) {\small $k-1$};
\node[circle, draw=black] at (10,2.5) {2};
  \draw[->] (9.7,1.9) .. controls (8.6, 1.1) .. (7.5,0.7); \node at (8.6,0.9) {\small $1$};
   \draw[<-] (9.6,2.2) .. controls (8.6, 1.9) .. (7.5,0.9);  \node at (8.6,2) {\small $1$};
\node[circle, draw=black] at (7,2.5) {3};
  \draw[->] (7.5,2.6) .. controls (8.5, 2.8) .. (9.5,2.6);  \node at (8.5,3) {\small $1$};
  \draw[->] (6.6,2.4) .. controls (6, 2.2) and (6, 2.8) .. (6.6,2.6);  \node at (6,2.5) {\small $k$};
\end{tikzpicture}
\caption{Transition graphs of $A_k$ and $B_k$. The numbers at the edge-labels stand
for weights (= multiplicities) of the edges.}
\label{fig:SFT}
\end{center}
\end{figure}

We will continue the proof using the original non-telescoped matrices $A_k$ and $B_k$.
For our choice of $B_k$, the corresponding transition graphs of
$A_k$ and $B_k$ are the same except that the multiplicities of the $12$-entry of $33$-entry are swapped.
(In particular, $A_k = B_k$ if $k=1$.)

The fact that $B_k$'s have a loop $3 \to 3$ with multiplicity $k$ (which has only multiplicity $1$ in $A_k$)
is the reason why $\bB^n$ is eventually (i.e., for sufficiently large $n$) larger than $\bA^n$ coordinate-wise.

Note that the matrix multiplication goes in opposite direction.
This can be remedied by taking the transpose, amounting to a reversal of arrows in the transition graph, which has no influence on the number of loops.

We claim that for every sequence $(k_i)_{i \in \N}$ there is $n_0$ such that for all $n \geq n_0$:
\begin{equation}\label{eq:AB}
 \sup_{u,v=1,2,3} (A_{k_1} \cdots A_{k_n})_{u,v} < \inf_{u,v=1,2,3} (B_{k_n} \cdots B_{k_1})_{u,v}.
\end{equation}
In fact, we can choose $n_0$ any integer larger than
$\min\{ j \geq 3 : J \text{ odd and } k_j \geq 2\}$, and hence we have
\begin{equation}\label{eq:ABtilde}
 \sup_{u,v=1,2,3} (\bA^n)_{u,v} < \inf_{u,v=1,2,3} (\bB^n)_{u,v}.
\end{equation}
The estimate \eqref{eq:AB} works for every sequence, so \eqref{eq:ABtilde} holds for every block.
Thus $\bA^n < \bB^n$ and also $\bA^n(1+\eps) < \bB^n$ entry-wise for some small
positive $\eps$.
It follows that
$(1+\eps)^n \lambda_1(\bA^n)  < \lambda_3(\bA^n)^{-1}$, proving that
$\limsup_n \frac{1}{n} \left( \log \lambda_1(\bA^n)  + \log \lambda_3(\bA^n) \right) \leq -\log(1+\eps) < 0$,
%and hence $\liminf_n \frac{1}{n}  \log \lambda_2(\bA^n)> 0$
as required.

It remains to prove claim \eqref{eq:AB}.
The $u,v$-entries of the matrices represent the number of $n$-paths from $u$ to $v$.
Note, that in every iterate (= matrix-multiplication) we use a different matrix,
of the same structure, but with different values $k_i$.
That is, we can use the same strings of labels, and insert (i.e., multiply with) the correct
multiplicities to compute the number of paths. Let us call this total number of paths for a single code
$x_1 \cdots x_n$ the weight $w(x_1 \dots x_n)$ of the code. The weight of a path is the product of the weights of its edges.

We will make a bijection between $n$-paths for $\bA^n$ and $\bB^n$ such that the multiplicity for
$\bB^n$ is always higher.

We first look at the loops from edge $a$ back to $a$. There are represented by words $a \cdots a$ (allowed by the transition graph),
with no $a$ in between. In fact, we don't write the first $a$ because it is the end of the previous loop.

Let $w_A$ and $w_B$ denote the weights of the edges of the transition graphs of $A$ and $B$.
 For every $n \geq 1$, we have
\begin{equation}\label{eq:wAB}
 \sum_{j=0}^{n-1} w_A( (be)^jbc d^{2(n-j-1)} )
 = k_2k_4 \cdots k_{2n}-1 \leq
  \sum_{j=0}^{n-1} w_B( (be)^jbc d^{2(n-j-1)} ),
\end{equation}
and the inequality is strict
if any of the odd-indexed $k_{2j+1} \geq 2$.

Each edge $b$ has weight $w_A(b) = w_B(b) = 1$;
the edges $c$ in position $2i$ have weight $w_A(c) = w_B(c) = k_{2i}-1$
and the remaining edges in position $i$ have weight
$w_A(e) = w_B(d) = k_i$, $w_A(d) = w_B(e) = 1$.

We prove by induction that
$\sum_{j=0}^{n-1} w_A( (be)^jbc d^{2(n-j-1)} ) =
k_2k_4 \cdots k_{2n} - 1$.
Indeed, for $n = 1$ we have $w_A(bc) = k_2-1$.
If the statement holds for $n-1$, then
\begin{eqnarray*}
 \sum_{j=0}^{n-1} w_A( (be)^jbc d^{2(n-j-1)} ) &=&
 \sum_{j=0}^{n-2} w_A( (be)^jbc d^{2(n-j-1)} ) +
 k_2k_4 \dots (k_{2n}-1) \\
 &=&
 k_2k_4 \cdots k_{2n-2} - 1 +  k_2k_4 \dots (k_{2n}-1) \\
 &=& k_2k_4 \cdots k_{2n}-1.
\end{eqnarray*}
We show that
$\sum_{j=m}^{n-1} w_B( (be)^jbc d^{2(n-j-1)} ) =
k_{2m+2} \cdots k_{2n} - 1$  for all $n \in \N$ and $0 \leq m < n$.
This is done by induction too,
but now working downwards, first taking the odd-indexed
$k_{2j+1} = 1$.
For $m = n-1$, we get $w_B((be)^{n-1}bc ) = k_{2n}-1$.
If the statement holds for $m$, then
\begin{eqnarray*}
 \sum_{j=m-1}^{n-1} w_B( (be)^jbc d^{2(n-j-1)} ) &=&
 \sum_{j=m}^{n-1} w_B( (be)^jbc d^{2(n-j-1)} ) +
 k_{2m}k_{2m+2} \dots (k_{2n}-1) \\
 &=&
 k_{2m+2} \cdots k_{2n-2} - 1 +
 k_{2m} \dots (k_{2n}-1) \\
 &=& k_{2m} \cdots k_{2n}-1.
\end{eqnarray*}
This concludes the induction.
Since each instance $w_B(d)>1$ at an odd position increases
the terms in the sum of $w_B$-weights
by a factor $\geq 2$, the inequality follows.
This proves \eqref{eq:wAB}.

In continuation of the proof of \eqref{eq:AB},
the bijection between loops is done by grouping according to their lengths and
 estimating their weights group-wise using equation~\eqref{eq:wAB}.

For paths that don't start or finish in $a$, similar arguments hold,
where we can offset $(be)^j$ in the graph of $A$ by $(dd)^j$ in the graph of $B$, so that indeed
$\sup_{u,v=1,2,3} (\bA^n)_{u,v} < \inf_{u,v=1,2,3} (\bB^n)_{u,v}$
 for sufficiently large $n$.
\end{proof}

\subsection{Host's eigenvalue condition}\label{sec:host}
Host \cite{Host86} formulated a condition for primitive substitution shifts
to have an eigenvalue:
For $e^{2\pi i \ev}$
to be an eigenvalue the Koopman operator for some $\ev \in (0,1)$ is:
\begin{equation}\label{eq:wm}
\sum_{n=1}^\infty \tb  \vec \ev A^n \tb  < \infty,
\qquad \vec \ev = (\ev,\ev,\ev),
\end{equation}
where $\tb x \tb$ is the distance of a vector to the nearest integer lattice point and $A$ is the associated matrix of the substitution.
The condition is of the same gist as Veech criterion  \cite{V84}
for non-weak-mixing of
IETs, although there $A$ can be seen as the matrix representing the action of Rauzy induction on the homology $H^1(\Z)$ of the translation surface obtain as suspension flow over the IET.

For primitive substitution shifts, \eqref{eq:wm} is both sufficient and necessary and every eigenfunction can be taken to be continuous.
Later works, see e.g.\  \cite{CDHM03,DFM19} showed that
in the context of linearly recurrent $S$-adic shifts (i.e., subshifts based on a sequence of substitution rather than a single one), \eqref{eq:wm} is necessary and sufficient
as well. For more general $S$-adic subshifts, more complicated conditions are needed, and we come back to them in Sections~\ref{sec:wm-continuous} and~\ref{sec:wm-measurable} where we distinguish between continuous and measurable eigenvalues.

\subsection{Weak mixing for (pre-)periodic ITMs}\label{sec:wm-periodic}

\begin{thm}\label{thm:periodic}
 For every (pre-)periodic sequence $(k_i)_{i \in \N}$ satisfying \eqref{eq:k2},
 the corresponding system $(\Omega,T_{\alpha,\beta})$ is weakly mixing.
\end{thm}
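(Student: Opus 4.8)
The plan is to use Host's eigenvalue condition \eqref{eq:wm} together with the hyperbolicity established in Proposition~\ref{prop:AB}. Since $(k_i)$ is (pre-)periodic, after telescoping into finitely many strictly positive blocks $\tilde A_i$ as in \eqref{eq:tele}, the sequence $(\tilde A_i)$ is itself (pre-)periodic, so the relevant product $\bA^n$ is governed by a single matrix $M$: namely $M = \tilde A_{j_0} \cdots \tilde A_{j_0+p-1}$, the product over one period. By Theorem~\ref{thm:LR} a (pre-)periodic bounded sequence gives a linearly recurrent shift, so by the remarks in Section~\ref{sec:host} condition \eqref{eq:wm} is \emph{necessary and sufficient} for $e^{2\pi i\xi}$ to be an eigenvalue. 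Thus weak mixing is equivalent to: there is no $\xi\in(0,1)$ with $\sum_n \tb \vec\xi \,\bA^n\tb < \infty$.

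Next I would translate summability of $\tb\vec\xi\,\bA^n\tb$ into a linear-algebraic statement about $M$. Because $M$ is a fixed integer matrix with $\det = \pm 1$ and, by Proposition~\ref{prop:AB}, eigenvalues $\lambda_1 > |\lambda_2| > 1 > \lambda_3 > 0$ (all simple, since the product of strictly positive matrices has a simple Perron root and the spectral gaps from Prop.~\ref{prop:AB} force simplicity of all three), $\R^3$ splits as $\R\vec v_1 \oplus \R\vec v_2 \oplus \R\vec v_3$. Write $\vec\xi = c_1\vec v_1 + c_2\vec v_2 + c_3\vec v_3$. Then $\vec\xi\,\bA^n = \vec\xi M^{\lfloor n/p\rfloor}(\text{bounded factor})$ grows like $|\lambda_1|^n$ in the $\vec v_1$-component unless $c_1 = 0$, and like $|\lambda_2|^n$ unless $c_2 = 0$; so $\tb\vec\xi\,\bA^n\tb\to 0$ forces $c_1 = c_2 = 0$, i.e. $\vec\xi$ must lie in the one-dimensional stable line $\R\vec v_3$. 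Conversely if $\vec\xi \in \R\vec v_3$ then $\|\vec\xi\,\bA^n\| \leq C\lambda_3^n$ decays geometrically and \eqref{eq:wm} holds. Hence $(\Omega,T_{\alpha,\beta})$ fails to be weakly mixing iff the stable line $\R\vec v_3$ contains a point $\vec\xi = (\xi,\xi,\xi)$ with $\xi\in(0,1)$, equivalently iff $\vec v_3$ is a nonzero scalar multiple of $(1,1,1)$.

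So the crux reduces to a purely algebraic claim: for a (pre-)periodic admissible $(k_i)$, the stable eigenvector $\vec v_3$ of $M$ is \emph{not} parallel to $(1,1,1)$. Equivalently, $(1,1,1)$ is not an eigenvector of $M$; and since $M$ is a product of matrices of the form $A_k$ (and the telescoping filler $A_1$'s), it suffices to show $(1,1,1)$ is not a common eigenvector — more precisely that $(1,1,1)M$ is not a multiple of $(1,1,1)$. A direct computation gives $(1,1,1)A_k = (1, k+1, k)$ for the row action (or the column version $A_k(1,1,1)^t = (2k-1,1,2)^t$, depending on the convention fixed in Section~\ref{sec:wm-Lyap}); in either case this equals $(1,1,1)$ only for a degenerate value of $k$ excluded by \eqref{eq:k2}. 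One then has to argue that this non-invariance survives multiplication over a full period: if $(1,1,1)$ were the stable eigenvector of $M$, then since the unstable/stable splitting is $M$-invariant and each $A_{k_i}$ permutes these lines in a controlled way (indeed the stable line of $M$ pulls back to the stable line at the next level), $(1,1,1)$ would have to be (proportional to) the stable eigenvector of each individual $A_{k_i}$-block as well, forcing $(1,1,1)$ to be a $\lambda_3$-eigenvector of some $A_k$ with $k\geq 2$ (such a $k$ exists by \eqref{eq:k2}), contradicting the explicit computation above — note $P_k(\lambda)=\lambda^3-\lambda^2-k\lambda+1$ has $P_k(1) = 1-k \neq 0$, so $1$ is never an eigenvalue of $A_k$, yet $(1,1,1)A_k = (1,k+1,k)$ shows $(1,1,1)$ is an eigenvector of $A_k$ only if the eigenvalue is $1$. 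This simultaneously handles the pre-periodic case: the pre-period only contributes an invertible left factor, which does not change whether the stable line at the start of the periodic tail equals $\R(1,1,1)$.

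I expect the main obstacle to be the bookkeeping in the middle paragraph: carefully justifying that summability of $\tb\vec\xi\,\bA^n\tb$ along the \emph{telescoped} sequence is equivalent to summability along the original sequence $A_{k_1}A_{k_2}\cdots$ (the telescoping only groups consecutive terms, but one must check the intermediate partial products stay controlled — this follows since each $\tilde A_i$ is a product of boundedly many fixed matrices), and pinning down the exact row-vs-column convention so that the eigenvector computation matches the definition of $W^s(\vec 0)$ used in Section~\ref{sec:wm-Lyap}. The Lyapunov/hyperbolicity input and the necessity-and-sufficiency of Host's condition are quoted wholesale, so the genuinely new content is just the short eigenvector calculation $(1,1,1)A_k \neq \mu(1,1,1)$ and its propagation across a period.
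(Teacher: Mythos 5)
There is a genuine gap, and it occurs at the pivotal step of your second paragraph. Host's condition \eqref{eq:wm} involves $\tb\cdot\tb$, the distance to the nearest point of the integer lattice $\Z^3$, not the norm. Consequently $\tb\vec\xi\,\bA^n\tb\to 0$ does \emph{not} force the expanding components $c_1,c_2$ of $\vec\xi$ to vanish: for instance any integer vector $(p,q,r)$ satisfies $\tb (p,q,r)\bA^n\tb=0$ for all $n$ while blowing up along the unstable directions. The correct reduction is that an eigenvalue can exist only if $(\xi,\xi,\xi)$ lies in some \emph{integer translate} $(p,q,r)+\R\vec v_3$ of the stable line, i.e.\ in a countable union of lines, and ruling out all the nonzero translates is the entire content of the theorem. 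Your criterion ``non-weak-mixing iff $\vec v_3\parallel(1,1,1)$'' only addresses the translate $(p,q,r)=(0,0,0)$, which is in fact vacuous: the stable direction has the form $(u_1,u_2,-1)$ with $u_1,u_2>0$ (it lies in $Q^-$), so it is never parallel to $(1,1,1)$, and your argument would ``prove'' weak mixing with no arithmetic input at all --- a sure sign the reduction is too strong. The paper instead writes $(\xi,\xi,\xi)+s(u_1,u_2,-1)=(p,q,r)$, eliminates $s$ and $\xi$ to get a rational linear relation \eqref{eq:u1u2} between $u_1$ and $u_2$, and combines this with the eigenvalue equations to show that $\tilde\lambda_3$ would then satisfy a quadratic equation over $\Q$, contradicting that $\tilde\lambda_3$ is a cubic irrational (irreducibility of the characteristic polynomial of the periodic block). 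None of this Diophantine/algebraic mechanism appears in your proposal.

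A secondary flaw: your ``propagation across a period'' argument is unsound. The stable line of the periodic product $M$ pulls back under one factor to the stable line of the cyclically shifted product, not to a stable eigenvector of the individual matrix $A_{k_i}$; the factors need not share eigenvectors with $M$. (For the special case you are treating one could instead note that $(1,1,1)M$ is an integer vector, so if it were proportional to $(1,1,1)$ the ratio would be an integer, incompatible with $\tilde\lambda_3\in(0,1)$ --- but again this only disposes of the trivial translate.) The framing of the first paragraph (telescoping, linear recurrence, necessity and sufficiency of \eqref{eq:wm}) matches the paper and is fine; the proof is missing its core.
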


\begin{proof}
Suppose that the sequence $(k_i)_{i \in \N}$ has period $n$ and also satisfies \eqref{eq:k2},
then the corresponding $S$-adic shift is linearly recurrent (and uniquely ergodic).
If $A^n = A_{k_1} \cdots A_{k_n}$,
then its eigenvalues are $\tilde \lambda_i$ where $\tilde \lambda_3 < 1 < |\tilde \lambda_2| < \tilde \lambda_1$.
That is, there is only a one-dimensional stable subspace $E_3$ spanned by a vector $(u_1,u_2,-1) \in Q^-$.
Also, since $A^n$ is irreducible, the $\tilde \lambda_i$ are cubic numbers.

As $(k_i)$ is periodic, telescoping gives a stationary sequence,
so condition \eqref{eq:wm} decides on the eigenvalues of the
the Koopman operator.
To prove that there is a non-trivial eigenvalue, we need to show that $\vec \ev$ lies in some integer translation of $E_3$. That is
\begin{equation}\label{eq:tt}
(\ev,\ev,\ev)+s(u_1,u_2,-1) = (p,q,r)
\end{equation}
for some integers $p,q,r$ and reals $u_1,u_2 >0$.
Eliminating $s$ and $\ev$ from this system, we obtain
%that $u_1+1$ and $u_2+1$ are rationally dependent:
 \begin{equation}\label{eq:u1u2}
  (q-r)u_1 = (p-r)u_2+(p-q).
 \end{equation}
 For the matrix $A^n = (a_{ij})_{i,j=1}^3$, the third eigenvalue equation
 $\tilde\lambda_3 (u_1,u_2,-1) = (u_1,u_2,-1) A$
 gives
 \begin{eqnarray*}
  \tilde\lambda_3 u_1 &=& a_{11} u_1 + a_{21} u_2 - a_{31},\\
   \tilde\lambda_3 u_2 &=& a_{12} u_1 + a_{22} u_2 - a_{32}.
 \end{eqnarray*}
 Multiplying these equations and inserting \eqref{eq:u1u2} to eliminate
 $u_1$, we obtain
 \begin{eqnarray*}
  \tilde\lambda_3( (p-r)u_2 + (p-q)) &=& a_{11}((p-r)u_2+(p-q)) + a_{21}(q-r)u_2 - (q-r)a_{31},\\
   \tilde\lambda_3 (q-r)u_2 &=& a_{12}((p-r)u_2+(p-q)) + a_{22}(q-r)u_2 - (q-r)a_{32}.
 \end{eqnarray*}
Making $u_2$ subject of the second equation (so that the RHS is a fractilinear expression in $\tilde\lambda_3$) and then inserting it in the first equation gives an equation of two fractilinear expressions in $\tilde\lambda_3$.
Therefore $\tilde\lambda_3$ is the solution of a quadratic equation
contradicting that $\tilde\lambda_3$ is a cubic number.

For the preperiodic case, with preperiod $m$,
we need to replace \eqref{eq:tt} by
$$
(\ev,\ev,\ev) \cdot A_{k_1} \cdots A_{k_m} + s(u_1,u_2,-1) = (p,q,r).
$$
This gives a more cumbersome version of \eqref{eq:u1u2}
but the argument is essentially the same.
\end{proof}

\subsection{The stable direction.}\label{sec:wm-stable}

Let $(\Omega, T_{\alpha,\beta})$ be an infinite type ITM based on a sequence $(k_n)_{n \in \N}$ satisfying \eqref{eq:k2}.
We call $W^s(\vec 0) := \bigcap_n A_{k_1}^{-1} \circ \dots \circ A_{k_n}^{-1}(Q^{-}) = \Span(\vec v_3)$ the {\em stable space} of the sequence $(A_{k_i})_{i \in \N}$.
In order to find  $W^s(\vec 0)$, we iterate the matrices $B_{k_i}$ from~\eqref{eq:UB}. This gives
$$
(u,v,w) = \lim_{n \to \infty} \frac{B^n(\vec a)}{\| B^n(\vec a)\|_1}
\quad \text{ for } \quad
B^n = B_{k_n} \circ \cdots \circ B_{k_1}
$$
and, apart from the assumption of unique ergodicity, the choice of the vector $\vec a \in Q^+$ can be arbitrary.
Then $(v,u,-w) = (u,v,w) U$ (with $U$ from \eqref{eq:UB})
is the stable direction of $A_{k_1} \cdot A_{k_2} \cdots$,
normalised so that $u+v+w = 1$.

\begin{lemma}\label{lem:unique}
 The direction $(u,v,w)$ is uniquely determined by the sequence $(k_i)_{i \in \N}$,
 provided it satisfies \eqref{eq:k2}.
\end{lemma}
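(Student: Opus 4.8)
The plan is to show that the nested cones $\bB^n(Q^+)=\tilde B_n\cdots\tilde B_1(Q^+)$ shrink, in the Hilbert projective metric $d_H$ on the interior of $Q^+$, onto a single ray $\Span(u,v,w)$; since telescoping merely regroups partial products, this is the same ray as $\bigcap_n B^n(Q^+)$, so $(u,v,w)$ is well defined, and $(v,u,-w)=(u,v,w)U$ then spans $W^s(\vec 0)$ under the identification set up above. Independence of the choice of $\vec a\in Q^+$ is then automatic, since any two starting vectors satisfy $d_H(\bB^n\vec a,\bB^n\vec a')\to 0$.

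First I would settle the claim deferred from the proof of Proposition~\ref{prop:AB}: every telescoped block $\tilde B_i$ — built from the $B_{k_j}$ exactly as the $\tilde A_i$ are built from the $A_{k_j}$ in \eqref{eq:tele}, only read in the opposite order — is strictly positive. This is the verbatim analogue of the primitivity argument in the proof of Proposition~\ref{prop:subshift}: by Figure~\ref{fig:SFT} the transition graph of $B_k$ has the same underlying digraph as that of $A_k$ (only the multiplicities of the edges $1\to 2$ and $3\to 3$ are swapped), so the same path count shows that \eqref{eq:k2} forces each $\tilde B_i$ to be strictly positive.

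By Birkhoff's contraction theorem, a strictly positive matrix $M$ contracts $d_H$ by a factor $\tau(M)=\tanh(\tfrac14\Delta(M))<1$, where $\Delta(M)$ is the $d_H$-diameter of the cone $M(Q^+)$, i.e.\ the logarithm of $\max\frac{M_{ac}M_{bd}}{M_{ad}M_{bc}}$ over its $2\times 2$ minors. Hence $d_H(\bB^n\vec a,\bB^n\vec a')\le\big(\prod_{i\le n}\tau(\tilde B_i)\big)\, d_H(\vec a,\vec a')$, and it suffices to prove $\prod_i\tau(\tilde B_i)=0$. The key step — the one place where the shape of $B_k$ as opposed to $A_k$ enters — is to bound the projective diameters $\Delta(\tilde B_i)$ by a constant $C_0$ independent of $i$ and of the parameters $k_{i,j},r_{i,j}$ occurring in the block. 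For this one writes each entry of $\tilde B_i$ as an explicit polynomial in the $k$'s, reads off its dominant monomial from the path structure of the transition graph, and checks that, because in $B_k$ the large weight $k$ sits on the self-loop $3\to 3$ rather than on a transition (as it does in $A_k$), those dominant monomials agree up to a bounded multiplicative factor across the four entries of every $2\times 2$ minor; this yields $\Delta(\tilde B_i)\le C_0$, hence $\tau(\tilde B_i)\le\tanh(C_0/4)=:\tau_0<1$ and $\prod_{i\le n}\tau(\tilde B_i)\le\tau_0^{\,n}\to 0$.

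Once the cones collapse to a ray $\Span(u,v,w)$ inside the interior of $Q^+$, I normalise by $u+v+w=1$ and conclude by \eqref{eq:UB} that $(v,u,-w)$ spans $W^s(\vec 0)$, which is therefore a function of $(k_i)_{i\in\N}$ alone. I expect the uniform bound $\Delta(\tilde B_i)\le C_0$ to be the main obstacle: it is precisely the statement that the time-reversed cocycle $(B_{k_i})_i$ stays ``uniquely ergodic'' even when $(k_i)$ grows fast enough to destroy unique ergodicity of $(A_{k_i})_i$ — and indeed the corresponding bound for the blocks $\tilde A_i$ fails outright — so it cannot follow from a soft argument and must be extracted from the combinatorics of Figure~\ref{fig:SFT} together with the explicit polynomial form of the block entries. (One could equivalently avoid Birkhoff's theorem and argue directly that all coordinate ratios of $\bB^n\vec a$ converge; the estimate required is the same.)
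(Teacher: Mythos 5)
Your proposal follows essentially the same route as the paper: both arguments reduce the lemma to showing that the telescoped blocks of $B$-matrices are strictly positive and contract the Hilbert projective metric on $Q^+$ by a factor bounded away from $1$ \emph{uniformly} in the parameters $k_{i,j}, r_{i,j}$, the uniformity coming precisely from the fact that in $B_k$ the weight $k$ sits on the $3\to 3$ loop, so that the block's rows (the paper writes out $B_a B_1^{2r} B_b B_c$ explicitly and notes its rows are almost proportional) have uniformly bounded cross-ratios. The only difference is presentational --- the paper exhibits one explicit minimal positive block and uses that infinitely many such blocks occur by \eqref{eq:k2}, while you describe the same entrywise dominant-monomial estimate without carrying it out --- so your outline matches the paper's proof in both structure and the identification of the crux.
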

Thus, even if $T_{\alpha,\beta}$ fails to be uniquely ergodic and there is no unique
unstable direction in the first octant, the stable direction in $Q^-$ is always well-defined.
\\[2mm]
\begin{proof}
 Since $(k_i)_{i \in \N}$ satisfies \eqref{eq:k2},
 there are infinitely many $i$ and integers $r \geq 0$ such that
 $k_i \geq 2$, $k_{i-1} = \dots = k_{i-2r} = 1$ and $k_{i-2r-1} \geq 2$.
 Abbreviate $a = k_i \geq 2$, $b = k_{i-2r-1} \geq 2$ and $c = k_{i-2r-2} \geq 1$. The telescoped block
 from $k_i$ to $k_{i+2r+2}$ gives
 \begin{eqnarray*}
 \tilde B &=& B_a \cdot B_1^{2r} \cdot B_b \cdot B_c\\
 &=&
 \begin{pmatrix}
  (a-1)(r+1) & (a-1)b(r+1)+1 & c( (a-1)b(r+1)+1 ) - (r(a-1)+1) \\
  1 & b-1 & c(b-1) \\
  a(r+1) & ab(r+1)+1 & c(ab(r+1)+1) -(ra+1)
 \end{pmatrix},
\end{eqnarray*}
which is a strictly positive matrix. Therefore it represents a strict contraction in the Hilbert metric on the first octant, see e.g.\ \cite[Section 8.6]{B23}.
The contraction factor is bounded by
$\tanh(\frac12 \log(\rho))$, where\footnote{We phrase this different from formula (8.29) in \cite{B23} because we are using left-multiplication with row vectors instead of right multiplication with column vectors as in \cite{B23}.}
\begin{equation}\label{eq:rho}
\rho = \max_{1 \leq j,j' \leq 3} \sqrt{\frac{\max\{\tilde B_{j,k}/\tilde B_{j',k} : 1 \leq k \leq 3\} }
{\min\{\tilde B_{j,k}/\tilde B_{j',k} : 1 \leq k \leq 3\}} }.
\end{equation}
The shape of $\tilde B$, where the rows are almost multiples of each other,
yields that $\rho$ is bounded, and hence
the contraction factor smaller than $1$, uniformly in $a,b,c$.
As there are infinitely many such telescoped blocks $\tilde B$,
the infinite matrix product contracts the positive octant into a single half-line $\ell$.
Thus $U\ell$ for the matrix $U$ from \eqref{eq:UB} represents the unique stable direction $W^s(\vec 0)$.
\end{proof}

To find an eigenvalue, we have to solve
\begin{equation}\label{eq:ttt}
(\ev,\ev,\ev) = (p,q,r) + s(v,u,u+v-1) \text{ for some } p,q,r \in \Z.
\end{equation}
Solving for $\ev$ and $s$ gives arcs
$\ell_{p,q,r} = \{ (u,v) \in \Delta : (1-u)(r-q) = (1-v)(r-p)\}$ in the simplex
$\Delta = \{ (u,v) : 0 \leq u \leq 1-v\}$, or equivalently
\begin{equation}\label{eq:ttt2}
\ell_{p,q,r} = \{ (u,v) \in \Delta : u(q-r) = v(p-r)+q-p\}.
\end{equation}
Expressed in terms of $p,q,r,\ev$, we find
\begin{equation}\label{eq:uv}
 u = \frac{\ev-q}{\ev+r-p-q}, \qquad v = \frac{\ev-p}{\ev+r-p-q},
\end{equation}
so that $\ev = \frac{r-p}{1-u} + p+q-r = \frac{r-q}{1-v} + p+q-r \in \Q$ if and only if both $u, v \in \Q$. We define $\ell_{p,q,r}(\ev)$ as those points $(u,v) \in \Delta$ such that \eqref{eq:uv} holds.
Let $H_k(u,v)$ indicate the first two coordinates of
$(u,v,1-(u+v)) B_k$ normalised to unit length.
Then $H_k:\Delta \to \Delta_k := H_k(\Delta)$ is a map from the unit simplex $\Delta$, and it has the formula
\begin{equation}\label{eq:H}
H_k:(u,v) = \frac{1}{D_k} (v\ ,\ 1-v)
\quad \text{ for } \quad D_k = k(1-v)+1-u,
\end{equation}
\begin{equation}\label{eq:Hinv}
H_k^{-1}: (x,y) \mapsto \left(\frac{x+(k+1)y-1}{x+y}\ ,\ \frac{x}{x+y}\right) \quad \text{ for } \quad (x,y) \in \Delta_k.
\end{equation}
The derivative and its determinant are
$$
DH_k(u,v) = \frac{1}{D_k^2}\begin{pmatrix}
                    v & k+1-u \\ 1-v &  u-1
                   \end{pmatrix},
\qquad
\det DH_k(u,v) = -\frac{1}{D_k}.
%\quad \text{ for } \ D_k = k(1-v) + 1-u.
$$

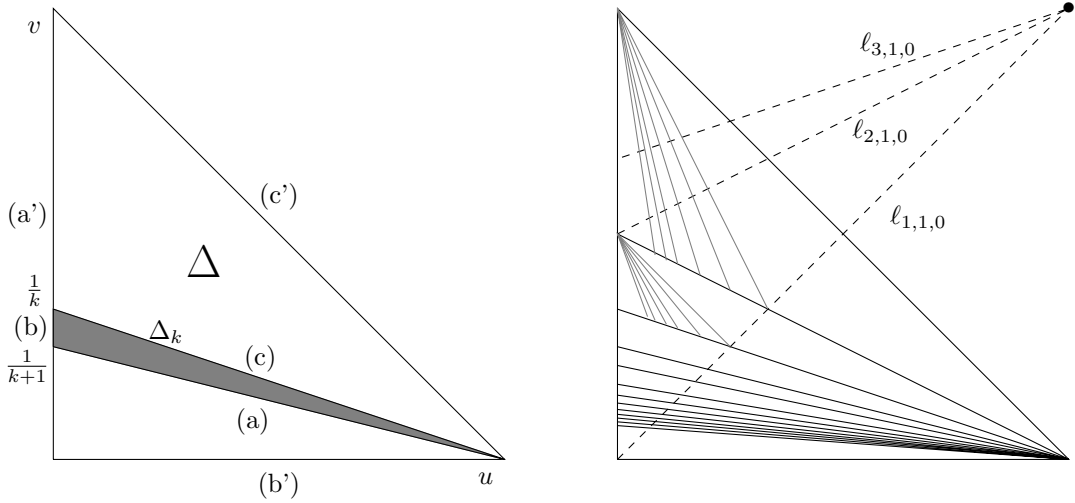
\begin{figure}[ht]
\begin{center}
\begin{tikzpicture}[scale=0.5]
\draw[-] (0,0)--(12,0)--(0,12)--(0,0);
\node at (11.5, -0.5) {$u$}; \node at (-0.5, 11.5) {$v$};
\filldraw[gray] (0,3)--(12,0)--(0,4)--(0,3);
\draw[-] (0,3)--(12,0)--(0,4)--(0,3);
\node at (4,5.26) {\LARGE $\Delta$}; \node at (3,3.4) {$\Delta_k$};
\node at (-0.5,4.5) {$\frac1{k}$};\node at (-0.7,2.45) {$\frac1{k+1}$};
\node at (-0.6,3.5) {(b)};\node at (5.3,1) {(a)};\node at (5.5,2.7) {(c)};
\node at (-0.685,6.5) {(a')};\node at (6,-0.7) {(b')};\node at (6,7) {(c')};
\draw[dashed] (15,6)--(27,12)--(15,8);
\draw[dashed] (15,0)--(27,12);
\node at (27,12) {$\bullet$};
\node at (22,8.7) {$\ell_{2,1,0}$};
\node at (22.2,11) {$\ell_{3,1,0}$};
\node at (23,6.4) {$\ell_{1,1,0}$};
%%%%%
\draw[-] (15,0)--(27,0)--(15,12)--(15,0);
\draw[-] (15,6)--(27,0)--(15,4);
\draw[-] (15,3)--(27,0)--(15,2.5);
\draw[-] (15,2)--(27,0)--(15,1.71);
\draw[-] (15,1.5)--(27,0)--(15,1.33);
\draw[-] (15,1.2)--(27,0)--(15,1.1);
\draw[-] (15,1)--(27,0)--(15,0.9);
\draw[-, color=gray] (19, 4)--(15,12)--(18, 4.5);
\draw[-, color=gray] (17.2, 4.9)--(15,12)--(16.6, 5.2);
\draw[-, color=gray] (16.3, 5.3)--(15,12)--(16, 5.44);
\draw[-, color=gray] (15.8, 3.8)--(15,6)--(18, 3);
\draw[-, color=gray] (17.2, 3.3)--(15,6)--(16.6, 3.5);
\draw[-, color=gray] (16.3, 3.6)--(15,6)--(16, 3.7);
\end{tikzpicture}
\caption{The simplex $\Delta$ and image $\Delta_k = H_k(\Delta)$ (left) and further images (right).}
\label{fig:Deltak}
\end{center}
\end{figure}

To find the set $\Delta_k$, we compute the images of the three boundary lines of $\Delta$:
\begin{eqnarray*}
 (a') \to (a) && H_k( \{ 0 \} \times [0,1]) =
\Big\{ \frac{1}{k(1-v)+1} (v,1-v) : v \in [0,1]\Big\},\\
 (b') \to (b) && H_k([0,1] \times \{ 0 \}) =
\Big\{ \frac{1}{k+1-u} (0,1) : u \in [0,1]\Big\},\\
 (c') \to (c) && H_k(\{ (1-v,v) : v \in [0,1]\}) =
\Big\{ \frac{1}{k(1-v)+v} (v,1-v) : v \in [0,1]\Big\},\\
\end{eqnarray*}
see Figure~\ref{fig:Deltak}, left. Comparing (a) and (c) we can see that the triangles $\Delta_k$ and $\Delta_{k+1}$
are adjacent, and $\Delta_1$ is adjacent to the upper boundary of $\Delta$. That is, the $\Delta_k$'s have disjoint interiors and
$\Delta  = \overline{ \bigcup_{k\geq 1} \Delta_k}$.
Furthermore, unless $k=k'=1$, $H_k \circ H_{k'}$ maps $\partial \Delta$ into the interior of $\Delta$, and hence, for every $(u,v) \in \Delta$,
there are no common boundaries of $\Delta_k$'s that are the image of
any point $H_k \circ H_{k'} \circ H_{k''}$....
It follows that for each $(u,v) \in \Delta$,  there is at most one sequence $(k_i)_{i \in \N}$
such that $\lim_{n\to\infty} H_{k_1} \circ H_{k_2} \circ \cdots \circ H_{k_n}(\vec a) = (u,v)$.

The boundary points of the $\Delta_k$ cannot be reached as limit  from an interior point $(u,v) \in \Delta^\circ$,
and under later iterates of the $H_k$'s, the iterated boundary points
$E := \bigcup_{n\in \N} \bigcup_{k_1, \dots, k_n} H_{k_1} \circ \cdots \circ H_{k_n}(\partial \Delta)$
cannot be equal to any point in $\bigcap_n  \bigcup_{k_1, \dots, k_n} H_{k_1} \circ \cdots \circ H_{k_n}(\Delta^\circ)$.

\begin{remark}
	The points $(x,y) \in \Delta$ which give rise to parameters $(k_n)_{n\in\N}$ such that the subshift associated is linearly recurrent, are bounded away
	from $\partial\Delta$ for all preimages $H_{k_n}^{-1} \circ \cdots \circ H_{k_1}^{-1}(x,y)$. For $(k_n)_{n\in\N}$ unbounded, a subsequence of preimages goes to the edges $(a')$, $(b')$; in case of unbounded blocks of $1$'s in $(k_n)_{n\in\N}$, a subsequence of preimages goes to $(c')$.
	Conjecturally, this set has full Hausdorff dimension, but for each fixed bound
	$C$,	the set of parameters for which $\limsup_n k_n \leq C$ and
	the lengths of blocks of $1$'s is eventually bounded by $C$
	has Hausdorff dimension strictly less than $1$. Since the maps $H_k$ are non-conformal, the current techniques of fractal geometry seem insufficient to prove this.
\end{remark}

The maps $H_k$ send points with rational coordinates to  points with rational coordinates,
and hence the lines between such points (hence with rational slope) to lines of the same properties.

\begin{prop}\label{prop:slope}
 The sides of every triangle $H_{k_1} \circ \cdots \circ H_{k_n}(\partial \Delta)$ have negative slopes for all $n \geq 1$
 and $k_1, \dots, k_n \in \N$.
\end{prop}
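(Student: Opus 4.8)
The plan is to prove the statement by induction on $n$, exploiting the explicit formula \eqref{eq:H} for $H_k$ and the fact that each $H_k$ sends lines to lines. The base case $n=1$ is exactly the computation of the three boundary arcs $(a),(b),(c)$ already carried out above: the image $H_k(\partial\Delta)$ consists of the three segments listed just before Figure~\ref{fig:Deltak}, and a direct inspection shows each of these has negative slope (for $(b)$ the image is a vertical segment $\{0\}\times[\tfrac1{k+1},\tfrac1k]$, which one treats as the limiting case and interprets consistently — or one observes that the relevant sides of $H_{k_1}\circ\cdots\circ H_{k_n}(\partial\Delta)$ for $n\ge 2$ are never vertical because $H_{k'}$ maps $\partial\Delta$ into the interior).

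For the inductive step, suppose the triangle $\Delta' := H_{k_2}\circ\cdots\circ H_{k_n}(\Delta)$ has all three sides of negative slope; I want to show $H_{k_1}(\Delta')$ does too. Since $H_{k_1}$ is a projective (fractional-linear) map in $(u,v)$, it carries the line through two points to the line through their images, so it suffices to understand how $H_{k_1}$ acts on slopes. Write a side of $\Delta'$ as a segment on a line $v = mu + b$ with $m<0$; using $H_k(u,v) = \frac{1}{D_k}(v,1-v)$ with $D_k = k(1-v)+1-u$, I would substitute and compute the image line. The key point is that the $v$-coordinate of $H_k(u,v)$ is $\frac{1-v}{D_k}$, which depends only on $u,v$ through the affine relation, and a short computation should show the image slope is again negative — essentially because $D_k > 0$ on $\Delta$ and because negativity of the slope is equivalent to a sign condition on a $2\times 2$ determinant built from the two endpoints, which $DH_k$ (whose determinant $-1/D_k$ is negative) and the positivity of the entries of $DH_k$ on the relevant region combine to preserve. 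Concretely, the sign of the slope of the segment through $P_1,P_2$ equals the sign of $\det\begin{pmatrix} u_2-u_1 & v_2-v_1 \\ 1 & m\end{pmatrix}$-type data, and pushing forward by the linear map $DH_k$ (which has negative determinant and, on $\Delta^\circ$, maps the relevant cone of directions into itself) flips orientation in a controlled way; tracking this carefully gives the claim.

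An alternative, perhaps cleaner route: instead of tracking slopes under $H_k$, track them under $H_k^{-1}$ using \eqref{eq:Hinv}, or simply verify directly that the three \emph{vertices} of $H_{k_1}\circ\cdots\circ H_{k_n}(\Delta)$ are the images of the three vertices of $\Delta$, compute these three vertices recursively, and check that each of the three pairwise differences has coordinates of opposite sign. Since the maps send rational points to rational points and the vertices of $\Delta$ are $(0,0),(1,0),(0,1)$, the vertices of the iterated triangle are explicit rational points, and one reduces to a finite sign check that propagates through the recursion $H_k(u,v)=\frac1{D_k}(v,1-v)$.

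The main obstacle I anticipate is the degenerate/boundary behaviour: the segment $(b)$ in the base case is vertical (slope $-\infty$ rather than a genuine negative slope), so the statement as literally phrased needs the convention that this counts, or a remark that for $n\ge 2$ no side of $H_{k_1}\circ\cdots\circ H_{k_n}(\partial\Delta)$ is vertical since the previous application of $H_{k_n}$ already pushes $\partial\Delta$ strictly inside $\Delta$ (as noted in the text, $H_k\circ H_{k'}$ maps $\partial\Delta$ into $\Delta^\circ$ unless $k=k'=1$). Handling the $k=k'=1$ case and the first step uniformly — so that ``negative slope'' is the right uniform statement — is the delicate bookkeeping point; everything else is the routine fractional-linear computation sketched above.
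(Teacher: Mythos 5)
Your inductive step has a genuine gap: it is \emph{not} true that $H_k$ maps every negative-slope line meeting $\Delta$ to a negative-slope line, so the inductive hypothesis ``all three sides of $\Delta'$ have negative slope'' is too weak to close the induction. Concretely, substituting \eqref{eq:Hinv} into $v=mu+b$ shows that $H_k$ sends this line to the line $(1-m-b)\,x=(m(k+1)+b)\,y-m$, whose slope is $\frac{1-m-b}{m(k+1)+b}$; this is negative only when $1-m-b$ and $m(k+1)+b$ have opposite signs, which is an extra condition beyond $m<0$. For instance the segment from $(0,\tfrac12)$ to $(\tfrac12,\tfrac{11}{25})$ lies in $\Delta$ and has slope $-\tfrac{3}{25}<0$, but $H_1$ sends its endpoints to $(\tfrac13,\tfrac13)$ and $(\tfrac{22}{53},\tfrac{28}{53})$, a segment of slope $\tfrac{31}{13}>0$; adjoining the third vertex $(\tfrac7{10},\tfrac15)$ produces a triangle in $\Delta$ with all three sides of negative slope whose $H_1$-image has a side of positive slope. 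Your ``alternative route'' (checking that vertex differences have coordinates of opposite signs) is just a restatement of the slope condition and fails on the same example, so it does not repair the argument.

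The missing ingredient is the special structure of the sides that actually occur, which is what the paper's proof exploits. At each generation the \emph{new} sides form a fan (pencil) of lines through a single point, namely the image of the vertex $(1,0)$ under the composition applied so far: for $n=1$ this is the fan from $(1,0)$ to the points $(0,\tfrac1k)$, i.e.\ lines with $m+b=0$, for which the image slope is $\tfrac{1}{mk}<0$ automatically. Inductively, $H_{k_1}$ carries such a fan to a fan through $V_{k_1}=H_{k_1}(1,0)$, and since the extreme members of a pencil restricted to a triangle having the pencil's base point on its boundary are two sides of that triangle, the slopes of the fan members interpolate between two slopes already known to be negative from the previous step. It is this common-vertex/interpolation argument --- not any pointwise slope-preservation property of $H_k$ --- that makes the induction work; without it the proof does not go through. (Your handling of the degenerate vertical side is fine and matches the paper's convention of treating $\partial_l\Delta$ as having slope $-\infty$.)
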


\begin{proof}
 With some stretch of terminology, the statement holds for $n=0$, so for $\Delta$ itself:
 here the ``bottom'' side $\partial_b\Delta$ has slope $0$ and the left side $\partial_l\Delta$ needs to be interpreted as with slope $-\infty$.
 Only the ``right'' side $\partial_r\Delta$ has a proper negative slope $-1$.

 Now $\bigcup_{k \in \N} \partial \Delta_k$ consists
 of $\partial_b\Delta$ and a fan $F$ of
 straight lines stretching from $(1,0)$ to $\partial_l\Delta$,
 see Figure~\ref{fig:Deltak}, right.
 The slope of these lines are therefore all negative.

 The maps $H_k$ reverse orientation: they are reflections in lines of positive slope combined with a (non-linear) contraction.
 Since the vertex $V_k := (0,\frac{1}{k}) = H_k(1,0)$ of $\Delta_k$ is the intersection of
 $\partial_l\Delta$ and $\partial_r\Delta_k$, $H_k(F)$ is a fan
 of lines stretching from $V_k$ to the opposite boundary $\partial_b\Delta_k$. These lines have slopes between the slopes of
 $\partial_l\Delta_k$ and $\partial_r\Delta_k$, so they are negative.

The proof follows from repeating this argument inductively.
\end{proof}

\subsection{Continuous eigenvalues for the general case.}\label{sec:wm-continuous}

Let $(E_n,V_n,\succ)$ be an ordered Bratteli diagram with Vershik transformation $\tau$ and associated matrices $A_i$.
(See \cite{DFM19} or \cite[Chapter 5.4]{B23} for the precise definitions.)
Theorem 2 in \cite{DFM19} states for {\bf positive} transition matrices $M_j$, $j \geq 1$, that $e^{2\pi i \ev}$ is a continuous eigenvalue if and only if
\begin{equation}\label{eq:wm2}
\sum_{n=1}^\infty \max_{x \in X} \tb \langle s_n(x) , \vec \ev M_1 \cdots M_n \rangle\tb  < \infty,
\qquad \vec \ev = (\ev,\ev,\ev),
\end{equation}
where  $(s_n(x))_v = \# \{ e\in E_{n+1} \ : \ e \succ x_{n+1}, \fs(e) = v \}$. Thus in the ordered Bratteli-Vershik diagram (with $\succ$ indicating the
order of the incoming edges)
the vector $s_n(x)$ counts the number of incoming edges that are higher in the order than edge $x_{n+1}$ in the path $x$.
This definition of $s_n$ is phrased in terms of ordered Bratteli diagrams. In terms of $S$-adic shifts $(X,\sigma)$ based on substitutions $\chi_n:\cA_n \to \cA_{n-1}$,
every $x \in X$ can be written as
$$
x = \lim_{n \to\infty} \sigma^{j_1} \circ \chi_1 \circ \sigma^{j_2} \circ  \chi_2 \dots
\sigma^{j_n} \circ  \chi_n(a_n)
$$
for some integer sequence $(j_n)$ and symbols $a_n \in \cA_n$
such that $a_n$ is the first letter of $\sigma^{j_{n+1}} \circ \chi_{k_{n+1}}(a_{n+1})$.
Then
$$
s_{n,b}(x) = |\sigma^{j_{n+1}} \circ \chi_{k_{n+1}}(a_{n+1})|_b,
\qquad b \in \cA_n
$$
where $|w|_b$ indicates the number of symbols $b$ in a word $w$.

We will use this for our situation, so with $\chi_{k_i}$ and $A_{k_i}$
(or in fact, the telescoped version $\tilde A_i$ from \eqref{eq:tele}) replacing $\chi_i$ and $A_i$.
The paper \cite{DFM19} is formulated for invertible $S$-adic shifts, but
the Bratteli-Vershik system having a minimal path (which is true in our case because $\chi_{k_i} \circ \chi_{k_{i+1}}$ is left-proper)
is sufficient.

\begin{lemma}
 Let $\{ \tilde A_i\}_{i \geq 1}$ be the telescoped version of $(A_i)_{i \geq 1}$ given in \eqref{eq:tele},
 and $\tilde s_{i+1}$ the associated $s$-value of $\tilde A_{i+1}$.
 Then
 \begin{equation}\label{eq:sum}
  \sum_{n=1}^\infty \max_{x \in X} \tb \langle s_n(x), \vec \ev A_{k_1} \cdots A_{k_n} \rangle\tb
  \geq
  \sum_{n=1}^\infty \max_{x \in X} \tb \langle \tilde s_n(x),  \vec \ev \tilde A_1 \cdots \tilde A_n \rangle\tb.
 \end{equation}
\end{lemma}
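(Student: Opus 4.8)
The plan is to compare the sum on the left over all indices $n$ with the sum on the right over the telescoped blocks, using the fact that telescoping only \emph{merges} consecutive substitution steps. Concretely, the telescoped matrix $\tilde A_i$ is a product $A_{k_{m_{i-1}+1}} \cdots A_{k_{m_i}}$ of consecutive $A_{k_j}$'s, so that $\tilde A_1 \cdots \tilde A_n = A_{k_1} \cdots A_{k_{m_n}}$ for the block endpoint $m_n$. Thus the partial products appearing on the right-hand side of \eqref{eq:sum} form a \emph{subsequence} of those appearing on the left-hand side, indexed by the block endpoints $m_1 < m_2 < \cdots$. If I can show that each term $\max_{x} \tb \langle \tilde s_n(x), \vec\ev \tilde A_1 \cdots \tilde A_n\rangle \tb$ on the right is dominated by the corresponding term $\max_x \tb \langle s_{m_n}(x), \vec\ev A_{k_1} \cdots A_{k_{m_n}}\rangle\tb$ on the left (or more precisely by a term with the same matrix factor and an $s$-vector that arises as a genuine $s_{m_n}(x)$), then the left sum, having the same terms as the right sum plus extra non-negative terms, is at least the right sum.

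First I would make precise the relation between $\tilde s_n(x)$, the $s$-vector for the telescoped Bratteli diagram at level $n$, and $s_{m_n}(x)$, the $s$-vector for the original diagram at the block-endpoint level $m_n$. Writing $x = \lim_n \sigma^{\tilde j_{n+1}} \circ \tilde\chi_{n+1}(\tilde a_{n+1}) \circ \cdots$ in the telescoped representation, one has $\tilde s_{n,b}(x) = |\sigma^{\tilde j_{n+1}} \circ \tilde\chi_{n+1}(\tilde a_{n+1})|_b$ for $b \in \cA_n$. The point is that the telescoped path $x$ also has an original-diagram representation through level $m_n$ in which the truncation at level $m_n$ agrees, i.e.\ the edge $x_{m_n+1}$ of the original diagram is the first edge of the composite $\tilde\chi_{n+1}$-block, and the higher-order-incoming-edge count $s_{m_n,b}(x)$ counts exactly the same incoming edges at a vertex $b \in \cA_{m_n} = \cA_n$. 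Hence $\tilde s_n(x) = s_{m_n}(x)$ as vectors indexed by $\cA_n$, and $\vec\ev \tilde A_1 \cdots \tilde A_n = \vec\ev A_{k_1} \cdots A_{k_{m_n}}$, so the two pairings are literally equal term by term.

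With that identification, the inequality is immediate: for each $n$ the $n$-th term of the right-hand sum equals the $m_n$-th term of the left-hand sum, and since $m_1 < m_2 < \cdots$ is strictly increasing and every term $\max_x \tb \cdot \tb \geq 0$, we get
$$
 \sum_{n=1}^\infty \max_{x} \tb \langle \tilde s_n(x), \vec\ev \tilde A_1 \cdots \tilde A_n\rangle \tb
 = \sum_{n=1}^\infty \max_{x} \tb \langle s_{m_n}(x), \vec\ev A_{k_1} \cdots A_{k_{m_n}}\rangle \tb
 \leq \sum_{n=1}^\infty \max_{x} \tb \langle s_n(x), \vec\ev A_{k_1} \cdots A_{k_n}\rangle \tb,
$$
which is \eqref{eq:sum}.

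The main obstacle is the bookkeeping in the middle step: verifying carefully that a point of $X$ expressed via the telescoped substitutions $(\tilde\chi_i)$ has an original-diagram expansion whose level-$m_n$ truncation is \emph{the same path} (rather than merely a path with the same infinite tail), so that the $s$-vectors genuinely coincide and no spurious shift $\sigma^{j}$ is introduced at the block boundary. One has to track the first-letter/minimal-edge compatibility: because each $\tilde\chi_{i}$ is a composition $\chi_{k_{m_{i-1}+1}} \circ \cdots \circ \chi_{k_{m_i}}$, the integer shift $\tilde j_{n+1}$ in the telescoped representation coincides with the shift $j_{m_n+1}$ in the original representation (the intermediate shifts being $0$ once one reads the $\chi$'s left-to-right inside a block, since the block is left-proper), and the first-letter constraints propagate consistently. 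Once this compatibility is spelled out, the rest is formal.
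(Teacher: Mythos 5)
Your proposal has a genuine gap at its central step: the identification $\tilde s_n(x) = s_{m_n}(x)$ is false, so the right-hand sum is \emph{not} a sub-sum of the left-hand sum. A telescoped edge at level $n$ of the telescoped diagram is an entire path from level $m_n$ to level $m_{n+1}$ of the original diagram, and $\tilde s_n(x)_v$ counts all such paths from $v$ that exceed $(x_{m_n+1},\dots,x_{m_{n+1}})$ in the induced order --- a path can exceed $x$ because \emph{any} of its constituent edges is higher, not only the first one. Concretely, for a block of two substitutions one computes
$\tilde s_n(x) = s_{m_n}(x) + A_{k_{m_n+1}}\, s_{m_n+1}(x)$,
and hence
$\langle \tilde s_n(x), \ev \tilde h_n\rangle = \langle s_{m_n}(x), \ev h_{m_n}\rangle + \langle s_{m_n+1}(x), \ev h_{m_n+1}\rangle$,
not a single term of the left-hand sum. (For instance, if $x_{m_n+1}$ is the maximal incoming edge at its target then $s_{m_n}(x)=0$, while $\tilde s_n(x)$ is generally nonzero because deeper edges of the block need not be maximal.) In the substitution picture: the shift $\tilde j_{n+1}$ locates the starting position inside the long composite word $\tilde\chi_{n+1}(\tilde a_{n+1})$, and the letter count of its suffix aggregates contributions from every level of the block, not just level $m_n+1$. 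So your "term-by-term equality" and the ensuing sub-sum comparison do not hold, even though the matrix products $\vec\ev\,\tilde A_1\cdots\tilde A_n = \vec\ev\, A_{k_1}\cdots A_{k_{m_n}}$ do agree at block endpoints.

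The repair is exactly the aggregation identity above, extended to a full block:
$\langle \tilde s_n(x), \tilde h_n\rangle = \sum_{j=m_n}^{m_{n+1}-1}\langle s_j(x), h_j\rangle$
(this is the telescoping-invariance of the ``number of Vershik steps below $x$'' count), followed by subadditivity of the distance to the nearest integer, $\tb a+b \tb \leq \tb a\tb + \tb b\tb$, and the trivial bound $\max_x \sum_j \tb\cdot\tb \leq \sum_j \max_x \tb\cdot\tb$. Summing the resulting block estimates over $n$ reassembles the full left-hand sum and yields \eqref{eq:sum}. This is the argument the paper gives; your write-up would need to replace the claimed equality of $s$-vectors by this decomposition-plus-triangle-inequality step.
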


\begin{proof}
Let $ \tilde h_n = (1,1,1)\tilde A_1 \cdots \tilde A_n$.  We can decompose the telescoped $ \langle \tilde s_i(x),  \tilde h_i \rangle$ back into levels $n_i$ (the index of the first matrix in the product $\tilde A_i$) to $n_{i+1}-1$ (the index of $A_{k_{i,m+2}}$). It follows that
$$
	\langle \tilde s_i(x),  \tilde h_i \rangle = \sum_{j = n_i}^{n_{i+1}-1} \langle s_j(x), h_j \rangle.
$$
Then
\begin{align*}
	%\sum_{n=1}^\infty \max_{x \in X} \tb \tilde s_n(x) \vec \ev \tilde A_{k_1} \cdots \tilde A_{k_n} \tb
	\sum_{i=1}^\infty \max_{x \in X} \tb \ev \langle \tilde s_i(x), \tilde h_i\rangle\tb
	&=  \sum_{i=1}^\infty \max_{x \in X} \tb \ev \sum_{j = n_i}^{n_{i+1}-1} \langle s_j(x), h_j\rangle\tb\\
	&\leq \sum_{i=1}^\infty \max_{x \in X}  \sum_{j = n_i}^{n_{i+1}-1} \tb \ev\langle s_j(x), h_j\rangle\tb\\
	&\leq \sum_{j=1}^\infty \max_{x \in X} \tb \ev\langle s_j(x), h_j\rangle\tb,
\end{align*}
as claimed.
\end{proof}
\begin{remark}
For our specific case of infinite type ITMs $s_j(x)$ can be simplified to the vector $(r_j,0,0)^T$ with $r_j \in\{0,\dots k_j \}$ depending on the choice of $x$. Then the sum is
$$
\sum_{j=1}^\infty \max_{x \in X} \tb \ev\langle s_j(x), h_j\rangle\tb =  \sum_{j=1}^\infty \max_{r_j \in\{0,\dots k_j\}} \tb \ev r_j h_j(1)\tb.
$$
\end{remark}

One would expect (in accordance with the Veech criterion \cite{V84})
the Koopman operator to have a non-trivial eigenvalue if and only if  $(u,v) \in \ell_{p,q,r}$ for some $p,q,r\in \Z$ and otherwise the map is weak mixing.
However, also because we have to deal with summability
condition \eqref{eq:wm2}, especially the factor $s_n(x)$, the story is not as clear-cut as that.
In one direction, it is as expected:

\begin{thm}\label{thm:wm}
 If $\vec \ev$ does not belong to the stable space
 $W^s(\vec 0) \bmod 1$ for $\ev \in (0,1)$, then the corresponding ITM has no continuous eigenvalue.
\end{thm}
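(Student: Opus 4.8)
The plan is to argue by contraposition: if $e^{2\pi i\ev}$ is a continuous eigenvalue of the Koopman operator, then $\vec\ev$ must lie in $W^s(\vec 0)\bmod 1$. By the discussion preceding the statement (Theorem~2 of \cite{DFM19}, valid for our telescoped strictly positive matrices $\tilde A_i$ since the Bratteli--Vershik system has a minimal path), a continuous eigenvalue forces the series
\[
\sum_{n=1}^\infty \max_{x\in X}\tb\langle \tilde s_n(x),\vec\ev\tilde A_1\cdots\tilde A_n\rangle\tb<\infty.
\]
Using the simplification in the last Remark, $\tilde s_n(x)$ ranges over multiples $(r_n,0,0)^\top$ with $r_n$ up to the relevant $k$-value, so in particular the series $\sum_n \tb \ev\,(\tilde A_1\cdots\tilde A_n)_{1,1}\,(1,1,1)?\tb$-type terms must converge; more robustly, taking $\vec e_j$ in place of the first-column restriction, convergence of the maximum forces $\tb \vec\ev\,\tilde A_1\cdots\tilde A_n\tb \to 0$ along the sequence, i.e. $\vec\ev\,\bA^n$ tends to the integer lattice. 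The first step, then, is to extract from summability the \emph{qualitative} conclusion $\tb\vec\ev\,\bA^n\tb\to 0$.

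The second step is to convert $\tb\vec\ev\,\bA^n\tb\to 0$ into the statement $\vec\ev\in W^s(\vec 0)\bmod 1$. Write $\vec\ev = \vec\ev\,\bA^n (\bA^n)^{-1}$; choosing integer vectors $\vec p_n$ with $\|\vec\ev\,\bA^n-\vec p_n\|=\tb\vec\ev\,\bA^n\tb\to 0$, we get $\vec\ev - \vec p_n(\bA^n)^{-1} = (\vec\ev\,\bA^n-\vec p_n)(\bA^n)^{-1}$. Now decompose $\vec\ev\,\bA^n-\vec p_n$ along the eigen-directions of $\bA^n$: its component in the unstable space $E_1\oplus E_2$ (the two expanding directions from Proposition~\ref{prop:AB}) gets magnified by $(\bA^n)^{-1}$ by a factor that is at most $C|\lambda_2|^{-n}$ on $E_2$ and $C\lambda_1^{-n}$ on $E_1$, which is summable/bounded, but the key point is the reverse: since $\vec\ev\,\bA^n-\vec p_n\to 0$ while the expanding directions contract under $(\bA^n)^{-1}$, the vector $\vec p_n(\bA^n)^{-1}$ converges, and its limit equals $\vec\ev$; moreover $\vec p_n(\bA^n)^{-1}$ differs from $\vec\ev$ by something that, pulled forward by $\bA^n$, stays bounded (indeed goes to $0$), so by the splitting \eqref{eq:ev} this difference must lie in the one-dimensional stable space $W^s(\vec 0)=\Span(\vec v_3)$, where $\bA^n$ contracts. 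Hence $\vec\ev \in \vec p_n(\bA^n)^{-1} + W^s(\vec 0)$ for each $n$, i.e. $\vec\ev\in W^s(\vec 0)\bmod 1$ (after passing to the limit of the integer translates, using that $W^s(\vec0)$ is closed and the integer part stabilizes because $(\bA^n)^{-1}\vec p_n$ converges).

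I expect the main obstacle to be the second step, and specifically making rigorous the claim that the limit vector lies in the \emph{stable} space rather than merely being $\vec\ev$ modulo the lattice with no control on direction. The subtlety is that $(\bA^n)^{-1}$ acts on the quotient $\T^3$, so ``$\vec p_n(\bA^n)^{-1}$ converges'' must be phrased carefully: one works with a fixed lift to $\R^3$ and shows the sequence of lifts is Cauchy. The clean way is to fix the splitting $\R^3 = E_1^{(n)}\oplus E_2^{(n)}\oplus E_3^{(n)}$ for $\bA^n$ (or better, use the persistence of a genuine stable subspace $\vec v_3$ from Proposition~\ref{prop:AB} and the fact that the complementary directions are uniformly expanding), project $\vec\ev$ minus its integer approximant onto the stable direction, and check that the error in the expanding complement must vanish because otherwise $\tb\vec\ev\,\bA^n\tb$ would grow. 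A secondary technical point is justifying that summability of the $\max_x$ in \eqref{eq:wm2} — which involves the combinatorial weights $\tilde s_n(x)$, not just $(1,1,1)$ — still yields $\tb\vec\ev\,\bA^n\tb\to0$; here one uses that among the admissible $\tilde s_n(x)$ there is a choice whose first coordinate is comparable to $(\tilde A_{n+1})_{1,\cdot}$-sums, together with strict positivity of the $\tilde A_i$, so that smallness of the pairing propagates to smallness of each coordinate of $\vec\ev\,\bA^n$. Once these two points are in place, the contrapositive is complete.
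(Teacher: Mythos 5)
Your overall shape --- argue by contraposition, get $\tb \vec\ev\,\bA^n\tb\to 0$ from the summability condition \eqref{eq:wm2}, then upgrade this to $\vec\ev\in W^s(\vec 0)\bmod 1$ --- is the right one, and your first step is sound: as $x_{n+1}$ runs through the incoming edges, successive admissible values of $\tilde s_n(x)$ differ by standard unit vectors (all three of them occur, by strict positivity of $\tilde A_{n+1}$), so smallness of $\max_x\tb\langle\tilde s_n(x),\vec\ev\,\bA^n\rangle\tb$ does force every coordinate of $\vec\ev\,\bA^n$ to $0$ modulo $1$. The genuine gap is in your second step. From $\tb\vec\ev\,\bA^n\tb\to 0$ alone one cannot conclude $\vec\ev\in W^s(\vec 0)\bmod 1$, because of an \emph{overflow} problem: writing $\vec\ev\,\bA^n=\vec p_n+\vec\delta_n$ with $\vec p_n\in\Z^3$ and $\|\vec\delta_n\|=\tb\vec\ev\,\bA^n\tb$, your pull-back argument needs $\vec p_{n+1}=\vec p_n\tilde A_{n+1}$ for all large $n$; the obstruction is the integer vector $\vec p_{n+1}-\vec p_n\tilde A_{n+1}=\vec\delta_n\tilde A_{n+1}-\vec\delta_{n+1}$, whose size is governed by $\|\vec\delta_n\|\,\|\tilde A_{n+1}\|$, and since the $k_i$ (hence the norms $\|\tilde A_{n+1}\|$) need not be bounded, $\vec\delta_n\to0$ does not make this $<1$. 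Your proposed repair --- ``the error in the expanding complement must vanish because otherwise $\tb\vec\ev\,\bA^n\tb$ would grow'' --- does not close this: a nonzero expanding component makes $\|\vec\ev\,\bA^n-\vec p_{n_0}\tilde A_{n_0+1}\cdots\tilde A_n\|$ grow, but $\tb\cdot\tb$ measures distance to the \emph{nearest} lattice point, which may simply jump away from the forward orbit of $\vec p_{n_0}$; that jump is precisely what you have not excluded.

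The paper excludes overflow by using the combinatorial structure of the $\tilde s_n(x)$ a second time, not merely their unit differences. The admissible values form a chain decreasing in unit steps from $\tilde s_n^+(x)$ (a column of $\tilde A_{n+1}$ with one entry reduced by $1$) down to $\vec 0$. Once $\vec\ev_n:=\vec\ev\,\bA^n\bmod 1$ lies in $B_\eps(\vec 0)$, each unit step changes $\langle\tilde s_n(x),\vec\ev_n\rangle$ by less than $\eps$; so if $\langle\tilde s_n^+(x),\vec\ev_n\rangle$ were near a \emph{nonzero} integer, some intermediate choice of $x$ would give $\tb\langle\tilde s_n(x),\vec\ev_n\rangle\tb>\frac13$, contradicting the bound $<\frac14$ supplied by summability. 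Hence the pairing with (essentially) each column of $\tilde A_{n+1}$ is small as a real number, i.e.\ $\vec\ev_n\tilde A_{n+1}$ is genuinely small in $\R^3$ --- this is exactly the no-overflow statement --- and iterating gives $\vec\ev_n\tilde A_{n+1}\cdots\tilde A_{n+m}\to\vec 0$, which is $\vec\ev\in W^s(\vec 0)\bmod 1$ after pulling back by the unimodular matrix $\bA^n$. This intermediate-value argument along the chain of $\tilde s_n(x)$-values is the missing ingredient in your proposal; once it is in place, the eigenspace decomposition from Proposition~\ref{prop:AB} that you invoke is not needed for this theorem.
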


\begin{proof}
Let $\vec \ev_0 = \vec \ev \bmod 1$ and $\vec \ev_{n+1} = \vec \ev_n \tilde A_{n+1} \bmod 1 \in (-\frac{1}{2},\frac{1}{2}]^3$. Assume by contradiction that
condition \eqref{eq:wm2} holds, i.e., $\max_{x\in X} \tb \langle \tilde s_{n}(x) , \vec \ev_n \rangle \tb$ is summable.
Take $n_0 \in \N$ so that $\max_{x\in X} \tb \langle \tilde s_{n}(x) , \vec \ev_n \rangle \tb <  1/4$ for all $n \geq n_0$.

The vectors $\tilde s_{n}(x)$ have non-negative entries, but are smaller
than the $v$-th column of $\tilde A_{n+1}$ if the path $x \in X$ is such that $r(x_{n+1})=v$.
Using the language of BV-diagrams, if $x_{n+1}$ is the smallest incoming edge to $v$, then $\tilde s_{n}(x)$ has the largest possible value,
say $s^+_{n}(x)$, which is the $v$-th column of $\tilde A_{n+1}$ with one entry decreased by $1$.
Taking successive incoming edges for $x_{n+1}$, $\tilde s_{n}(x)$ decreases each time by
one unit, depending on the source vertex $\fs(x_{n+1})$.

Now if $\vec \ev_n \in B_\eps(\vec 0)$, but $\langle \tilde s^+_{n}(x), \vec \ev_n\rangle$
is close to a non-zero integer, then choosing paths $x$ with $x_{n+1}$ equal to successive
incoming edges to $v$,  $\langle \tilde s_{n}(x), \vec \ev_n\rangle$ decreases every step by an
amount $<\eps$.
At some step,  $\tb \langle \tilde s_{n}(x), \vec \ev_n\rangle \tb > \frac13$,
contradicting that $\max_{x\in X} \tb \langle \tilde s_{n}(x) , \vec \ev_n \rangle \tb < 1/4$.

This implies that $\vec \ev_n \cdot \tilde A_{n+1} \cdots \tilde A_{n+m} \to \vec 0$ as $m \to \infty$, contrary to the assumption that $\vec \ev \notin W^s \bmod 1$, and the proof is complete.
\end{proof}

The next proposition together with \eqref{eq:uv} shows that there cannot be an eigenvalue $e^{2\pi i \ev}$
for rational $\ev$, because $\frac{1}{\ev+r-p-q} (\ev-q,\ev-p) \in \Delta$ is contained in
 $H_{k_1} \circ \cdots \circ H_{k_n}(\partial \Delta)$
for $n$ sufficiently large, and that means there is no sequence $(k_i)_{i \in \N}$ satisfying \eqref{eq:k2}
such that  $\ell_{p,q,r}(\ev) = \lim_{n \to \infty} H_{k_1} \circ \cdots \circ H_{k_n}(\Delta^\circ)$.

\begin{prop}\label{prop:rat}
 Every rational point in $\Delta$ belongs to
 $\bigcup_{n \geq 0} \bigcup_{k_1, \dots, k_n \in \N}
 H_{k_1} \circ \cdots \circ H_{k_n}(\partial \Delta)$.
\end{prop}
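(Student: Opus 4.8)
The plan is to show that any rational point $(u_0,v_0) \in \Delta$ is pushed to the boundary $\partial\Delta$ after finitely many applications of the inverse maps $H_{k}^{-1}$, exactly mimicking a continued-fraction / Euclidean-algorithm style descent. Recall from \eqref{eq:Hinv} that if $(x,y) \in \Delta_k$ then
$$
H_k^{-1}(x,y) = \left(\frac{x+(k+1)y-1}{x+y}\ ,\ \frac{x}{x+y}\right).
$$
First I would observe that the $\Delta_k$ have disjoint interiors and $\Delta = \overline{\bigcup_k \Delta_k}$, so a rational point $(u_0,v_0) \in \Delta^\circ$ either lies on some common boundary $\partial\Delta_k \cap \partial\Delta_{k+1}$ (or on $\partial\Delta_1 \cap \partial_r\Delta$), in which case we are already done at level $n=1$, or it lies in a unique $\Delta_k^\circ$. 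In the latter case apply $H_k^{-1}$ to obtain a new rational point $(u_1,v_1) \in \Delta^\circ$; if that point is again interior, iterate. The claim is that this process must terminate on $\partial\Delta$ in finitely many steps.

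The key is to find the right nonnegative integer-valued \emph{complexity measure} that strictly decreases. Writing $(u_0,v_0) = (a/c,\, b/c)$ in lowest terms with $a,b,c \in \N$, $a+b \le c$, and tracking how the denominators and numerators transform under $H_k^{-1}$, one sees that the natural quantity is the common denominator $c$ (or equivalently something like $c - a - b$, the ``slack'' to the bottom edge, together with $a$). Indeed, from \eqref{eq:Hinv} the new first coordinate has denominator $x + y$, which in the reduced fraction is $(a+b)/c$ with $a+b < c$ strictly unless $(u_0,v_0)$ is on $\partial_r\Delta$; so the denominator of the image is at most $a+b \le c$, and it drops strictly when $(u_0,v_0)$ is interior. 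The second new coordinate $x/(x+y) = a/(a+b)$ similarly has strictly smaller denominator. Hence after each genuinely-interior step the least common denominator of the pair strictly decreases; a strictly decreasing sequence of positive integers must halt, and it can only halt by landing on $\partial\Delta$ — which is exactly a point in $H_{k_1}\circ\cdots\circ H_{k_n}(\partial\Delta)$. This establishes the statement.

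Two points need care. First, one must verify the bookkeeping that $H_k^{-1}$ really does send $\Delta_k \cap \Q^2$ into $\Delta \cap \Q^2$ with strictly smaller complexity whenever the source point is interior; this is the routine-calculation core, handled by clearing denominators in \eqref{eq:Hinv} and using $a+b\le c$, $0 \le b$. Second — and this is the main obstacle — one must handle the boundary/degenerate cases so the induction is airtight: a rational point could sit on $\partial_l\Delta$ or $\partial_b\Delta$ rather than an interior $\Delta_k^\circ$, and points on $\partial_b\Delta$ ($v=0$) need the index $k$ chosen so that the denominator genuinely decreases rather than stabilizing (the edges $(b')\to(b)$ in Figure~\ref{fig:Deltak} collapse the bottom edge, which is what forces termination there even though $a+b$ need not drop in one step). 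Once one checks that every rational point of $\partial\Delta$ is by definition in the target set, and that the only alternative to termination is an infinite strictly-decreasing integer sequence, the proof is complete.
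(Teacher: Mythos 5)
Your proposal is correct and follows essentially the same route as the paper: apply $H_{k}^{-1}$ using \eqref{eq:Hinv}, observe that a point with common denominator $q$ in $\Delta^\circ$ is sent to one with common denominator $p+p'<q$, and conclude termination on $\partial\Delta$ by descent. The boundary cases you flag as the ``main obstacle'' are in fact vacuous, since any rational point of $\partial\Delta$ lies in the target set already at $n=0$, which is exactly how the paper dispenses with them.
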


\begin{proof}
 Suppose $(x,y) \in \Delta^\circ$ has rational coordinates.
 We can give them the same denominator,
 i.e., we write $(x,y) = (\frac{p}{q}, \frac{p'}{q})$
 with $p+p' < q$ because $x+y < 1$.
 Take $k_1 \in \N$ such that $(x,y) \in H_{k_1}(\Delta)$.
 If $(x,y) \in H_{k_1}(\partial \Delta)$ we are done, so assume that
 $(x,y) \in H_{k_1}(\Delta^\circ)$.
 Then using \eqref{eq:Hinv} we get
$$
 H_{k_1}^{-1}(x,y) = \left(\frac{x+(k_1+1)y-1}{x+y}\ ,\ \frac{x}{x+y}\right) = \left( \frac{ p + (k_1+1)p' - q }{p+p'} \ , \ \frac{p}{p+p'} \right),
 $$
 which are fractions of common denominator $p+p' < q$,
 independently of $k_1$.
 Continuing this way, we find $n < q$ such that
 $H_{k_n}^{-1} \circ \cdots \circ H_{k_1}^{-1}(x,y)$ lies on the line $\{ x+y=1 \} \subset \partial \Delta$.
\end{proof}

\begin{thm}
 There exist parameters $(\alpha,\beta)$ such that $T_{\alpha,\beta}$ is of type $\infty$ and $\vec \ev $ does belong to the stable space
 $W^s(\vec 0) \bmod 1$ for $\ev \in (0,1)$, but $e^{2\pi i \ev}$ is not a continuous eigenvalue of the Koopman operator.
\end{thm}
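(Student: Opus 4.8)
The plan is to exhibit a sequence $(k_i)_{i\in\N}$ satisfying \eqref{eq:k2} for which $\vec\ev\in W^s(\vec 0)\bmod 1$ for some $\ev\in(0,1)$, but for which the Host–Durand–Frank–Maass summability condition \eqref{eq:wm2} fails because of the factor $s_n(x)$ — more precisely, because the $k_i$ along the orbit grow fast enough that $\max_{r_j\in\{0,\dots,k_j\}}\tb \ev r_j h_j(1)\tb$ is bounded away from $0$ infinitely often even though $\tb \vec\ev_j\tb = \tb \ev h_j\tb$ tends to $0$ exponentially. By the Remark following the Lemma of Section~\ref{sec:wm-continuous}, the relevant sum is $\sum_j \max_{r_j\in\{0,\dots,k_j\}}\tb \ev r_j h_j(1)\tb$, so the task splits cleanly: first arrange $\vec\ev\in W^s(\vec 0)$, i.e.\ that $\vec\ev_n = \vec\ev A_{k_1}\cdots A_{k_n}\bmod 1\to\vec 0$; then show that along a subsequence $n_\ell$ one can find $r_{n_\ell}\le k_{n_\ell}$ with $\tb r_{n_\ell}\ev h_{n_\ell}(1)\tb\ge \tfrac14$, which forces divergence of the sum and hence, by Theorem~\ref{thm:wm}'s converse direction (Theorem 2 of \cite{DFM19}), the absence of a continuous eigenvalue.

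The construction I would use: pick $\ev$ irrational (so that by Proposition~\ref{prop:rat} and \eqref{eq:uv} we are not automatically excluded), let $(u,v)\in\Delta$ be the point $\ell_{p,q,r}(\ev)$ given by \eqref{eq:uv} for a suitable choice of integers $p,q,r$, and take $(k_i)_{i\in\N}$ to be the (unique, by the discussion after Lemma~\ref{lem:unique}) coding sequence with $\lim_n H_{k_1}\circ\cdots\circ H_{k_n}(\vec a)=(u,v)$, provided this sequence satisfies \eqref{eq:k2}; one has the freedom to choose $p,q,r$ and the orbit so as to make the $k_i$ grow arbitrarily quickly (since fast growth corresponds to preimages $H_{k_n}^{-1}\circ\cdots\circ H_{k_1}^{-1}(u,v)$ approaching the edges $(a')$ or $(b')$, cf.\ the Remark in Section~\ref{sec:wm-stable}). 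With $(u,v)$ on the arc $\ell_{p,q,r}$, the vector $\vec\ev$ lies in the corresponding integer translate of the stable line, so $\vec\ev\in W^s(\vec 0)\bmod 1$ by Lemma~\ref{lem:unique}; simultaneously $h_n(1)=((1,1,1)A_{k_1}\cdots A_{k_n})_1$ grows like $\lambda_1^n$ while $\vec\ev_n\to\vec 0$ like $\lambda_3^n$, so $\ev h_n(1)\bmod 1$ is equidistributed-like in a sense that, together with $k_n$ large, lets us select $r_n\le k_n$ with $r_n\ev h_n(1)$ a bounded distance from $\tfrac12$.

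The heart of the argument, and the main obstacle, is the quantitative step: making precise how large $k_n$ must be relative to $\tb\ev h_n(1)\tb$ so that $\max_{0\le r\le k_n}\tb r\,\ev h_n(1)\tb\ge\tfrac14$. If $\delta_n:=\tb\ev h_n(1)\tb$, then the multiples $r\delta_n$, $r=0,1,\dots,k_n$, step through $\R/\Z$ with step $\delta_n$; as long as $k_n\delta_n\ge \tfrac12$ (equivalently $k_n\ge \tfrac{1}{2\delta_n}$) some multiple lands in $[\tfrac14,\tfrac34]\bmod 1$, giving the required lower bound $\tfrac14$. So it suffices to arrange $k_n\ge 1/(2\delta_n)$ for infinitely many $n$; since $\delta_n$ decays geometrically while we have total freedom to let $k_n$ grow along a subsequence (the $k_i$'s are only constrained by \eqref{eq:k2}, which merely forbids eventual constancy to $1$ on either parity class), this is achievable — though one must verify carefully that prescribing large values of $k_{n_\ell}$ is compatible with the orbit $(u,v)$ still lying on $\ell_{p,q,r}$, i.e.\ that the map from coding sequences to points of $\Delta$ is surjective onto a subset of $\ell_{p,q,r}$ that is not exhausted by slowly-growing sequences. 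I would handle this by choosing $\ev$ and $(p,q,r)$ first so that $(u,v)$ is an \emph{irrational} point of $\ell_{p,q,r}$ (hence, by Proposition~\ref{prop:rat}, not on any $H_{k_1}\circ\cdots\circ H_{k_n}(\partial\Delta)$, so the coding sequence is infinite and well-defined), and then invoke that the cylinder sets $H_{k_1}\circ\cdots\circ H_{k_n}(\Delta^\circ)$ shrink to points, so that along $\ell_{p,q,r}$ there are points whose codes contain arbitrarily large entries at arbitrarily late positions — a density/category argument on the arc $\ell_{p,q,r}$ suffices and avoids the non-conformality difficulties flagged in the earlier Remark. Finally one checks \eqref{eq:k2} holds for the constructed sequence (adjusting by inserting, if necessary, occasional large values at both even and odd positions, which does not disturb the stable direction being on $\ell_{p,q,r}$), and concludes via Theorem 2 of \cite{DFM19} that $e^{2\pi i\ev}$ is not a continuous eigenvalue.
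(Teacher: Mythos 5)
Your overall strategy is the same as the paper's: keep $\vec\ev$ in the stable space while breaking the summability condition \eqref{eq:wm2} through the suffix factor $s_n(x)$. But the specific mechanism you choose — making individual entries $k_{n}$ large so that $\max_{0\le r\le k_{n}}\tb r\,\ev h_{n-1}(1)\tb\ge\tfrac14$ — runs into a genuine obstruction exactly at the point you flag as ``the main obstacle,'' and the density/category argument you propose does not resolve it; it merely restates it. Staying on $\ell_{p,q,r}$ means that at stage $n$ the pulled-back line must cross the subtriangle $\Delta_{k_{n+1}}$, and the $\Delta_k$ with large $k$ accumulate on the bottom edge of $\Delta$. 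A positive-slope line through $(1,1)$ enters $\Delta$ through the hypotenuse and, if it exits through the left edge at height $v_0$, meets only $\Delta_1,\dots,\Delta_{\lfloor 1/v_0\rfloor}$; whether the successive pullbacks of $\ell_{p,q,r}$ ever reach down to high-index subtriangles is not generic and can fail outright. For instance, for $\ell_{2,1,0}$ and $\ell_{3,1,0}$ one can compute the pullbacks explicitly: the line alternates between segments exiting near the top of the left edge (forcing $k=1$) and segments exiting through the bottom edge (allowing all $k$), so large entries are only available at one parity of positions. In particular your parenthetical claim that one may ``insert, if necessary, occasional large values at both even and odd positions'' without disturbing membership in $\ell_{p,q,r}$ is false in these examples. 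So ``density of large-entry cylinders along the arc'' is precisely the unproved statement, not a tool for proving it.

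The paper avoids this by using the one piece of freedom that is \emph{always} available: since every $\ell_{p,q,r}$ enters each nested subtriangle through its upper side and $\Delta_1$ is the sub-subtriangle adjacent to that side, arbitrarily long runs of $k_j=1$ are always compatible with staying on the line. A run $A_1^{2r}$ produces a telescoped suffix count $\tilde s_i\approx r$ in the third coordinate, and $r$ is tuned so that $\tfrac1i\le|\langle\tilde s_i,\vec\ev\tilde A_1\cdots\tilde A_i\rangle|\le\tfrac12$; the harmonic lower bound (rather than your constant $\tfrac14$) also keeps the relevant products below $\tfrac12$, so no wrap-around occurs and $\vec\ev_n\to\vec 0$ is not endangered — whereas your requirement $k_n\tb\ev h_{n-1}(1)\tb\ge\tfrac12$ additionally forces $k_n\,\ev h_{n-1}(1)$ to be near a nonzero integer, a Diophantine constraint you do not address. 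A smaller point: the criterion \eqref{eq:wm2} is stated for positive (telescoped) matrices, and the inequality \eqref{eq:sum} goes the wrong way for your purposes (divergence of the non-telescoped sum does not formally imply divergence of the telescoped one); this is repairable by choosing paths that kill the other levels in each block, but it needs saying. As it stands, the core of your construction rests on an availability claim that is unproved and false in examples, so the proof is incomplete.
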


\begin{proof}
From \eqref{eq:ttt2} it follows that a parameter $(u,v) \in \Delta$ is such that $\vec \ev$ is in the stable subspace $W^s(\vec 0)$ only if it belongs to  $\ell_{p,q,r}$ for some $p,q,r \in \Z$.
 The lines $\ell_{p,q,r}$ all pass through $(1,1)$ and have positive
 slope if they indeed intersect $\Delta$.
 Therefore if $\ell_{p,q,r}$ intersections some subtriangle
 $H_{k_1} \circ \cdots \circ H_{k_n}(\Delta)$, it goes through its upper side.

 Assume now that $\tilde A_1 \cdots \tilde A_i$
 is a block of telescoped matrices, each of the form \eqref{eq:tele}.
 Then $\eps_i := \tb \vec \ev A_{k_1} \cdots A_{k_n}\tb$ is exponentially small in $i$ (or even smaller).
 For the block $\tilde A_{i+1}$ as in \eqref{eq:tele},
 we choose $m=1$, and let $r_{i+1,1}$ be some large integer and
 $k_{i+1,1} =k_{i+1,2} = 2$, then
 $\tilde A_{i+1} = A_1^r \cdot A_{k_{i+1,1}} \cdot A_{k_{i+1,2}} \cdot A_{k_{i+1,3}}$ is the next telescoped matrix, and the corresponding value $\tilde s_{i} \approx r$.
 Choosing $r$ sufficiently large, we can assure that
 $\frac{1}{i} \leq |\langle \tilde s_{i} , \vec \ev \tilde A_1 \cdots \tilde A_i \rangle | \leq \frac12$.
Then the summability condition \eqref{eq:wm2} for a continuous eigenvalue fails.
\end{proof}

\begin{thm}\label{thm:generic}
 The set of parameters $(\alpha,\beta) \in \Omega_{\infty}$ such that
 $T_{\alpha,\beta}$ 
 %fails the summability condition \eqref{eq:wm2}
 does not have a eigenvalue $e^{2\pi i \ev}$ with $\ev \in (0,1)$
 contains a dense $G_\delta$-subset of $\Omega_{\infty}$.
\end{thm}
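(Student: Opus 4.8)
The plan is to exhibit an explicit dense $G_\delta$-subset of $\Omega_\infty$ on which $T_{\alpha,\beta}$ has no eigenvalue $e^{2\pi i\ev}$ with $\ev\in(0,1)$ (equivalently, on which it is weakly mixing). The starting point is the observation already used above: by \eqref{eq:ttt}--\eqref{eq:ttt2}, the vector $\vec\ev$ can lie in $W^s(\vec0)\bmod1$ for some $\ev\in(0,1)$ only if the stable direction $(u,v)$ of the sequence $(k_i)$ lies on one of the countably many arcs $\ell_{p,q,r}$, $(p,q,r)\in\Z^3$. So it is enough to show that the set of parameters whose stable direction avoids all of these arcs, intersected with a residual set of parameters on which $T_{\alpha,\beta}$ is uniquely ergodic, contains a dense $G_\delta$; the absence of \emph{measurable} eigenvalues then follows from the result announced in Section~\ref{sec:wm-measurable}.

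First I would set up the symbolic picture. Renormalization by $G$ gives a coding map $\kappa\colon\Omega_\infty\to\Sigma:=\{(k_i)_{i\in\N}:\eqref{eq:k2}\text{ holds}\}$ which, after discarding the meager set of parameters whose $G$-orbit ever lands on a chamber boundary $\{\alpha=1/k\}$, is a homeomorphism: $\kappa$ is then locally constant on cylinders up to any finite time, and the cylinders $[k_1,\dots,k_n]$ shrink to points by expansivity of $G$. Composing with the assignment of the stable direction (Lemma~\ref{lem:unique}) we obtain a map $\Phi\colon\Omega_\infty\to\Delta$, with $\Phi((k_i))$ the single point of the nested intersection $\bigcap_n H_{k_1}\circ\cdots\circ H_{k_n}(\Delta)$; this map is continuous because every sequence in $\Sigma$ contains infinitely many telescoping blocks, so by Lemma~\ref{lem:unique} those triangles contract to a point. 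Finally $\Omega_\infty$ is a Baire space (a dense-in-itself $G_\delta$ subset of the square $U$, modulo the meager exclusion above).

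Next I would verify that
$$
\mathcal{G}:=\big\{(\alpha,\beta)\in\Omega_\infty: k_n=1\text{ for infinitely many }n\big\}\ \cap\bigcap_{(p,q,r)\in\Z^3}\Phi^{-1}\!\big(\Delta\setminus\ell_{p,q,r}\big)
$$
is a dense $G_\delta$. The first set is $G_\delta$ (the cylinders $\{k_n=1\}$ are clopen in $\Omega_\infty$) and dense, since any admissible prefix extends inside $\Sigma$ by the periodic tail $2,2,1,1,2,2,1,1,\dots$, which restores \eqref{eq:k2} and forces $k_n=1$ infinitely often; moreover on this set $\liminf_n k_n=1<\infty$, so $T_{\alpha,\beta}$ is uniquely ergodic. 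Each arc $\ell_{p,q,r}$ is closed in $\Delta$, so $\Phi^{-1}(\Delta\setminus\ell_{p,q,r})$ is open; it is also dense, because every nonempty open subset of $\Omega_\infty$ contains a cylinder $[k_1,\dots,k_n]$, the set $\Phi([k_1,\dots,k_n])$ is dense in the genuinely two-dimensional triangle $H_{k_1}\circ\cdots\circ H_{k_n}(\Delta)$ (given an interior point there, truncate its $H$-coding after $m$ steps and append the good tail above to land inside $\Sigma$; the resulting stable direction lies in the same depth-$m$ subtriangle, hence within any prescribed distance of the point for $m$ large), and a set dense in a two-dimensional triangle cannot be contained in the one-dimensional arc $\ell_{p,q,r}$. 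By the Baire category theorem $\mathcal{G}$ is a dense $G_\delta$.

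To conclude: if $(\alpha,\beta)\in\mathcal{G}$ then $(u,v)\notin\bigcup_{(p,q,r)}\ell_{p,q,r}$, so by \eqref{eq:ttt}--\eqref{eq:uv} there is no $\ev\in(0,1)$ with $\vec\ev\in W^s(\vec0)\bmod1$; since $T_{\alpha,\beta}$ is uniquely ergodic on $\mathcal{G}$, the result of Section~\ref{sec:wm-measurable} rules out a measurable eigenvalue $e^{2\pi i\ev}$ for every such $\ev$ (and Theorem~\ref{thm:wm} already rules out continuous ones). I expect the main obstacle to be showing that the stable-direction map $\Phi$ is ``two-dimensional onto cylinders'', i.e.\ that $\Phi([k_1,\dots,k_n])$ is dense in the full-dimensional triangle $H_{k_1}\circ\cdots\circ H_{k_n}(\Delta)$ for every finite prefix: this is exactly what makes lying on some $\ell_{p,q,r}$ a codimension-one, hence non-generic, constraint. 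The remaining ingredients — that $\kappa$ is a homeomorphism away from a meager set of parameters, and that $\liminf_i k_i<\infty$ forces unique ergodicity — are either routine or already available from the earlier sections, after which the eigenvalue conclusion is immediate.
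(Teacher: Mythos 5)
Your argument is correct and follows the same route as the paper's proof: identify $\Omega_\infty$ with the set of stable directions in $\Delta$ via the coding $(k_i)$, note that an eigenvalue forces the stable direction onto one of the countably many closed lines $\ell_{p,q,r}$, show each $\Phi^{-1}(\ell_{p,q,r})$ is closed with empty interior, and apply Baire. Two remarks on where you diverge. First, you prove slightly more than the paper does: the paper's proof only produces a dense $G_\delta$ on which the summability condition \eqref{eq:wm2} fails, i.e.\ on which there is no \emph{continuous} eigenvalue, whereas by intersecting with the dense $G_\delta$ set $\{k_n=1 \text{ infinitely often}\}$ you obtain unique ergodicity (Corollary~\ref{cor:EU}) and can invoke Theorem~\ref{thm:wm_liminfk} to exclude measurable eigenvalues as well; this is needed if ``eigenvalue'' in the statement is to be read measure-theoretically, so your version is the more complete one. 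Second, your justification of nowhere-density proves more than needed and has a soft spot as written: you assert that $\Phi([k_1,\dots,k_n])$ is dense in the full two-dimensional triangle because, after truncating the coding of an arbitrary interior point at depth $m$, the depth-$m$ subtriangle containing that point has small diameter. That is not automatic: e.g.\ the triangles $H_1^{2r}(\Delta)$ contain the entire hypotenuse of $\Delta$ and do not shrink, so along codings with long blocks of $1$'s (or other inadmissible tails) the nested triangles need not contract to the point --- the contraction in Lemma~\ref{lem:unique} is only guaranteed across the special telescoped blocks. What you actually need is only that $\Phi([k_1,\dots,k_n])$ is not contained in a single line, and this can be argued directly: the $H_k$ are projective maps, so by Proposition~\ref{prop:slope} the sides of every subtriangle are segments of negative slope, while $\ell_{p,q,r}$ has positive slope; for each fixed $a$ the fan $\{\Delta_{k_1\dots k_n a b}\}_{b\in\N}$ accumulates as $b\to\infty$ on a subsegment (depending on $a$, pairwise disjoint up to endpoints) of the left side of $\Delta_{k_1\dots k_n}$, so a line meeting the interior of every $\Delta_{k_1\dots k_n a b}$ would have to meet that side in infinitely many of these subsegments, which is impossible for a transversal line meeting it in one point; hence some $\Delta_{k_1\dots k_n ab}$, and with it a point of $\Phi([k_1,\dots,k_n])$, avoids $\ell_{p,q,r}$. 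Since the paper itself dismisses this density claim with ``clearly'', this is a refinement rather than a correction, and the overall proposal stands.
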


Hence,
%if the summability condition is indeed equivalent to the Koopman operator having a non-trivial eigenvalue, then
for a dense $G_\delta$-subset of parameters $(\alpha,\beta) \in \Omega_\infty$,
the map $G_{\alpha,\beta}$ is weak mixing.
\\[3mm]
\begin{proof}
The map $G:U \to U \cup L$ from \eqref{eq:G} is piecewise continuous. Let
$U_k = \{ (\alpha, \beta) \in \R^2 : \frac1{k+1} < \alpha \leq \frac1k, 0 \leq \beta \leq \alpha\}$, $k \in \N$, be the domains on which $G$ is continuous.
 The sets $\Omega_\infty$ and $\Delta_\infty = \bigcap_n \bigcup_{k_1, \dots, k_n \in \N^n}
 H_{k_1} \circ \cdots \circ H_{k_n}(\Delta^\circ)$ are homeomorphic
 via the coding sequences $(k_i)_{i \in \N}$ satisfying \eqref{eq:k2}.
 Indeed, every cylinder set $[k_1, \dots, k_n]$ corresponds to open triangles
 $\Omega_{k_1 \dots k_n}$ in parameter space and $\Delta_{k_1, \dots, k_n}$
 in $\Delta$, and these triangles form a topological basis of $\Omega_\infty$ and $\Delta_\infty$ respectively.
 Common boundary points of such triangles don't belong to $\Omega_\infty$ or
 $\Delta_\infty$, so $\Omega_\infty$ and
 $\Delta_\infty$ are zero-dimensional sets without isolated points, but not compact.
 The itinerary maps
 $$
\begin{cases}
  (\alpha,\beta) \mapsto (k_i)_{i \in \N}, &  G^{i-1}(\alpha,\beta) \in U_{k_i}^\circ, \\
  (u,v) \mapsto (k_i)_{i \in \N}, &  H^{1-i}(u,v) \in \Delta_{k_i}^\circ, \\
\end{cases}
$$
are homeomorphisms, and the composition $h:\Delta_\infty \to \Omega_\infty$
that assigns to $(u,v) \in \Delta_\infty$ the unique parameter pair $(\alpha,\beta) \in \Omega_\infty$ so that their itineraries coincide is the required homeomorphism
between $\Delta_\infty$ and $\Omega_\infty$.
Since $\Delta_\infty \setminus \bigcup_{p,q,r \in \Z} \ell_{p,q,r}$ is clearly a dense $G_\delta$ set and no $(u,v)$ satisfies the summability condition \eqref{eq:wm2},
the $h$-image of this set is the required dense $G_\delta$-subset of $\Omega_\infty$
of parameters failing \eqref{eq:wm2}.
\end{proof}

\subsection{Weak mixing and measurable eigenvalues for the general case.}\label{sec:wm-measurable}

For the Bratteli-Vershik representation $(V,E, \leq)$  of the ITM.
Let $h_n \in \Z^3$ be the vector with entries
$$
h_n(v) = \#\{ \text{paths from } v_0 \text{ to } v \in V_n\} = (1, \dots, 1)M_1 \cdots M_n.
$$
A necessary and sufficient condition for $e^{2\pi i \ev}$
to be a (measure-theoretic) eigenvalue is the following (\cite{BDM05} and \cite[Prop 6.122]{B23}):
There is a sequence of functions $\rho_n:\tilde V_{n+1} \to \R$ such that
\begin{equation}\label{eq:meas_ev}
g_n(x) := \left( \tilde S_n(x) + \rho_n(\ft(x_{n+1})) \right) \ev \bmod 1
  \text{ converges for $\mu$-a.e. } x \in X_{BV}
  \text{ as } n \to \infty,
\end{equation}
where $\tilde S_n(x) = \sum_{j=1}^n \langle \tilde s_j(x), h_j(\fs(x_{j+1})) \rangle$ is the minimal number $k \geq 1$ of iterates of the Vershik map such that $\tau^k(x)_{n+2} \neq x_{n+2}$.
(Recall $\tilde s_j(x)$ from \eqref{eq:wm2} with $\tilde A_k$ instead of $M_k$.)
Furthermore, $\mu$ is a $\tau$-invariant and ergodic probability measure.
The condition $\liminf_j k_j < \infty$ made in this section
implies that there is only one such measure, see Corollary~\ref{cor:EU}.

Let $\Sigma$ be the unit simplex in $\R^3$ and for $w \in \R_{\geq 0}^3 \setminus \{ 0 \}$,
let $\pi(w) = \frac{w}{ \|w \|_1}$ denote the projection of $w$ onto $\Sigma$, that is, the frequency vector $(f_v(w))_{v=1}^3$
of symbols $v$ in $w$.
In terms of properties of the matrices
$A_{k_j}$, unique ergodicity is equivalent to
$$
\pi\left( \bigcap_{j > m} A_{k_{m+1}} \cdots A_{k_j}(\R_{\geq_0}^3) \right) = \ell_m 
$$
is a single point in $\Sigma$. 
This enables us, for any sequence $(\eps_n)_{n \in \N}$,  to find a sequence $r_n \nearrow \infty$ and telescope the sequence of substitutions
(and associated matrices) accordingly, such that for $\tilde \chi_{n+1} := \chi_{k_{r_{n-1}+1}} \circ \cdots \circ \chi_{k_{r_{n+1}}}$
with associated matrices $\tilde A_{n+1} = A_{k_{r_n+1}} \cdots A_{k_{r_{n+1}}}$ we have for all $n \in \N$:
\begin{equation}\label{eq:tele1}
\sup_{v \in \{ 1,2,3\}} \sup_{w,w' \in \{1,2,3\} } \frac{ f_v(w') }{f_v(w)} \leq 
\sup_{v \in \{ 1,2,3\}} \sup_{x,y \in \R^3_{\geq 0}} \frac{ \pi(\tilde A_{n+1} x)_v }{ \pi(\tilde A_{n+1} y)_v} 
\leq 1+\eps_{n+1},
\end{equation}
for the frequencies
$f_v(w) := \frac{|\tilde \chi_{n+1}(w)|_v}{|\tilde \chi_{n+1}(w)| }$.

\begin{thm}\label{thm:wm_liminfk}
 If $\liminf_n k_n  < \infty$ and $\vec \ev$ does not belong to the stable space
 $W^s(\vec 0) \bmod 1$ for $\ev \in (0,1)$, then the corresponding ITM is weakly mixing.
\end{thm}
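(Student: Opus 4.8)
The plan is to run the argument of Theorem~\ref{thm:wm}, but with the continuous‑eigenvalue criterion \eqref{eq:wm2} replaced by the measurable‑eigenvalue criterion \eqref{eq:meas_ev}, and to exploit $\liminf_n k_n<\infty$ — which by Corollary~\ref{cor:EU} gives unique ergodicity — both to choose a convenient telescoping and to pass from $\mu$‑almost‑everywhere convergence to a combinatorial statement about the tower heights $h_n$. Since $\mu$ is then the unique, hence ergodic, invariant measure, it suffices to show that the Koopman operator has no eigenvalue $e^{2\pi i\ev}$ with $\ev\in(0,1)$. Continuous ones are excluded by Theorem~\ref{thm:wm}. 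For rational $\ev$, the relation $\vec\ev\in W^s(\vec 0)\bmod 1$ would force, via \eqref{eq:ttt}, the coding point $(u,v)$ of the ITM to have rational coordinates, contradicting Proposition~\ref{prop:rat}; so $\vec\ev\notin W^s(\vec 0)\bmod1$ is automatic for rational $\ev$, and it remains to exclude a measurable eigenvalue $e^{2\pi i\ev}$ whenever $\vec\ev\notin W^s(\vec 0)\bmod1$.

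First I would telescope. Using unique ergodicity one picks $(\tilde\chi_n),(\tilde A_n)$ so that \eqref{eq:tele} and \eqref{eq:tele1} hold with $\sum_n\eps_n<\infty$, and — crucially — so that the three tower heights $h_n(1),h_n(2),h_n(3)$ stay comparable and the masses $\mu(\cB_n(v))h_n(v)$ of the level‑$n$ Kakutani–Rokhlin towers over $v$ stay bounded below, uniformly in $n$, by some $\gamma>0$; here $h_n=(1,\dots,1)\tilde A_1\cdots\tilde A_n$ and $\cB_n(v)$ is the base of the tower over $v$. This last requirement is where $\liminf_n k_n<\infty$ enters: a large index $k$ followed by a bounded one "resets" the height ratios, such pairs occur infinitely often unless $(k_i)$ is already bounded (a case with no degeneracy), and one also uses \eqref{eq:k2}. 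Since $\vec\ev\tilde A_1\cdots\tilde A_n=\ev\,h_n$, it suffices — exactly as at the end of the proof of Theorem~\ref{thm:wm}, using that the $\tilde A_i$ are unimodular integer matrices — to deduce from the existence of a measurable eigenfunction that the reduced vectors $\vec\ev_n:=\ev h_n\bmod1\in(-\tfrac12,\tfrac12]^3$ tend to $\vec 0$; this contradicts $\vec\ev\notin W^s(\vec 0)\bmod1$.

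The heart of the matter is this deduction. Let $f$ be a measurable eigenfunction; by \eqref{eq:meas_ev} the functions $g_n$ converge $\mu$‑a.e., so $\phi_n:=g_{n+1}-g_n\to 0$ in measure, and $\phi_n$ is constant on the atoms of the level‑$(n+2)$ Kakutani–Rokhlin partition, with value $\big[\langle\tilde s_n(x),h_n\rangle+\rho_{n+1}(\ft(x_{n+2}))-\rho_n(\ft(x_{n+1}))\big]\ev\bmod1$; inside one level‑$(n+1)$ tower over $w=\ft(x_{n+1})$, sitting at a fixed place in a fixed level‑$(n+2)$ tower, the summand $\langle\tilde s_n(x),h_n\rangle$ runs through the prefix heights $0=H_0<H_1<\dots<H_L$ of $\tilde\chi_{n+1}(w)=a_1\cdots a_L$, with $H_m-H_{m-1}=h_n(a_m)$, while $\ft(x_{n+1})$ and $\ft(x_{n+2})$ — hence the $\rho$‑terms — are fixed. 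Thus the difference of $\phi_n$ at the atoms that are the bases of the $m$‑th and the $(m{+}1)$‑st level‑$n$ subtower equals $h_n(a_m)\ev\bmod1$. Now fix $\eps>0$ and $v\in\{1,2,3\}$, and take $n$ large so that $\mu(\{\tb\phi_n\tb>\eps\})<\gamma/4$. If there were no occurrence $m$ of $v$ (in any $\tilde\chi_{n+1}(w)$, at any location, with $m<L$) such that both the $m$‑th and $(m{+}1)$‑st subtowers avoid the "bad set" $\{\tb\phi_n\tb>\eps\}$, then the bad subtowers would cover, within each such tower, every occurrence of $v$ together with its right neighbour; a vertex‑cover estimate over the runs then shows at least half of the level‑$n$ subtowers over $v$ (or of their successors) are bad, whence, since every level‑$n$ tower carries mass $\ge\gamma$ and the heights $h_n(u)$ are comparable, $\mu(\{\tb\phi_n\tb>\eps\})\ge\gamma/2$, a contradiction. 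Hence such a "good pair" exists, so $\tb h_n(v)\ev\tb<2\eps$; as $\eps$ and $v$ are arbitrary, $\tb h_n\ev\tb\to 0$ and therefore $\vec\ev_n\to\vec 0$, completing the proof.

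I expect the third paragraph to be the main obstacle. The criterion \eqref{eq:meas_ev} only gives a.e. information, and because $\mu(\cB_n)\to 0$ one cannot just intersect a fixed Egorov set with a single level‑$n$ tower: the counting must be performed inside the self‑similar Kakutani–Rokhlin structure at a level where the bad set is provably of small measure, which is precisely where unique ergodicity is indispensable, both to make the tower masses positive in the limit and to provide the frequency lower bounds in \eqref{eq:tele1}. A second, related difficulty is that the heights $h_n(1),h_n(2),h_n(3)$ can be of wildly different orders (e.g. $h_n(1)$ much smaller near a large $k$) while the symbol $3$ occurs only in isolated positions; this is why the telescoping of the second paragraph must be chosen to keep the heights comparable and the masses bounded below, and why the $\phi_n$‑difference identity is applied to pairs of \emph{adjacent} subtowers (for which the $\rho$‑terms cancel automatically regardless of the symbols involved), rather than to pairs sharing a symbol. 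The final step — that $\tb\vec\ev\tilde A_1\cdots\tilde A_n\tb\to 0$ contradicts $\vec\ev\notin W^s(\vec 0)\bmod1$ — is inherited from the proof of Theorem~\ref{thm:wm} and rests on the one‑dimensionality of the stable space (Proposition~\ref{prop:AB}) and on unimodularity of the $\tilde A_i$.
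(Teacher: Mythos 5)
Your overall strategy coincides with the paper's (telescope using $\liminf_n k_n<\infty$ so that \eqref{eq:tele1} holds and the heights stay comparable, then show that a measurable eigenvalue via \eqref{eq:meas_ev} would force the reduced vectors $\vec\ev_n$ to tend to $\vec 0$, contradicting $\vec\ev\notin W^s(\vec 0)\bmod 1$), but the combinatorial heart of your third paragraph contains a genuine gap: the claimed identity that the difference of $\phi_n$ between \emph{adjacent} level-$n$ subtowers equals $h_n(a_m)\ev\bmod 1$, ``the $\rho$-terms cancelling automatically regardless of the symbols involved,'' is false. Write out $g_{n}-g_{n-1}=\bigl(\langle\tilde s_n(x),h_n\rangle+\rho_n(\ft(x_{n+1}))-\rho_{n-1}(\ft(x_n))\bigr)\ev$. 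When you move from the $m$-th to the $(m+1)$-st subtower of the level-$(n+1)$ tower over $w$ (at a fixed place in a level-$(n+2)$ tower), the edge $x_{n+1}$ changes, so $\ft(x_n)=\fs(x_{n+1})$ changes from $a_m$ to $a_{m+1}$; the term $\rho_{n-1}(\fs(x_{n+1}))$ therefore does \emph{not} cancel unless $a_m=a_{m+1}$, and the difference is $\bigl(h_n(a_{m+1})+\rho_{n-1}(a_m)-\rho_{n-1}(a_{m+1})\bigr)\ev$ with $\rho_{n-1}$ an unknown, uncontrolled real function. (If instead you take $\phi_n=g_{n+1}-g_n$, whose two $\rho$-terms are indeed both fixed across these subtowers, then the height term $\langle\tilde s_{n+1}(x),h_{n+1}\rangle$ is also fixed and $\phi_n$ is constant there, so the identity is vacuous.) Moreover, even after restricting to pairs with $a_m=a_{m+1}$ — the very ``pairs sharing a symbol'' you explicitly reject — the symbol $3$ never occurs twice consecutively in any $\chi_{k_i}\circ\chi_{k_{i+1}}(\cdot)$, so you could never isolate $\tb h_n(3)\ev\tb$ this way.

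The paper's proof handles exactly this obstruction differently: it compares two paths $x^{(a)}(u)$ and $x^{(b)}(u)$ that agree up to level $n$ and whose level-$(n+1)$ edges $a,b$ have the \emph{same} source $v$ and the \emph{same} target $w$, so that $\tilde S_{n-1}$ and all $\rho$-terms agree exactly and $g_n(x^{(b)}(u))-g_n(x^{(a)}(u))=\sum_{j=a}^{b-1}\ev_n(\tilde\chi_{n+1}(w)_j)$. Rather than extracting a single $h_n(v)\ev$, it lower-bounds this partial sum over a stretch of length at least $|\tilde\chi_{n+1}(w)|/3$ by $\eta/(72C)$, using the near-constancy of letter frequencies in long subwords (\eqref{eq:tele1}, \eqref{eq:ab}) together with the height comparability of Lemma~\ref{lem:h}; the matching upper bound $4\eps$ comes from Lusin's theorem combined with the common-prefix/recurrence estimate of Lemmas~\ref{lem:prefix} and~\ref{lem:distance}. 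To repair your argument you would need either to justify rigorously how the $\rho_{n-1}$-terms are eliminated, or to switch to same-source-and-target comparisons and replace ``$\tb h_n(v)\ev\tb$ small for each $v$'' by a lower bound on partial sums along long stretches, which is essentially the paper's route.
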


Contrary to Theorem~\ref{thm:wm} about the absence of continuous eigenvalues, the proof below does depend explicitly
on the substitutions $\chi_{k_i}$ rather than only its abelianizations.
It remains an open question whether our family of ITMs contains maps with a measurable non-continuous eigenvalue.

Before proving this main result of this section,
first some lemmas.

\begin{lemma}\label{lem:prefix}
For all values of $k_1, \dots, k_n$, the word $\chi_{k_1} \circ \dots \circ \chi_{k_n}(3)$ is a prefix of
 $\chi_{k_1} \circ \dots \circ \chi_{k_n}(2)$.
 If $k_n \geq 2$, then $\chi_{k_1} \circ \dots \circ \chi_{k_n}(1)$ is a prefix of
 $\chi_{k_1} \circ \dots \chi_{k_n}(3)$, up to its last letter which is $1$ or $2$
 when $n$ is even or odd.
 If $k_n = 1$ and $n \geq 2$, then $\chi_{k_1} \circ \dots \circ \chi_{k_n}(3)$ is a prefix of
 $\chi_{k_1} \circ \dots \chi_{k_n}(1)$.
\end{lemma}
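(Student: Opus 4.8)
The plan is a single induction on $n$. Write $\phi_n := \chi_{k_1} \circ \cdots \circ \chi_{k_n}$, with $\phi_0 = \mathrm{id}$, and record the three identities that come straight from the definition \eqref{eq:chi} of $\chi_k$:
\[
\phi_n(1) = \phi_{n-1}(2), \qquad \phi_n(2) = \phi_{n-1}(3)\,\phi_{n-1}(1)^{k_n}, \qquad \phi_n(3) = \phi_{n-1}(3)\,\phi_{n-1}(1)^{k_n-1}.
\]
Combining the last two gives $\phi_n(2) = \phi_n(3)\,\phi_{n-1}(1)$, which proves the first assertion ($\phi_n(3)$ is a prefix of $\phi_n(2)$) outright for every $n \ge 1$, with no induction. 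The third assertion is equally cheap: if $k_n = 1$ and $n \ge 2$ then $\phi_n(3) = \phi_{n-1}(3)$, and this is a prefix of $\phi_{n-1}(2) = \phi_n(1)$ by the first assertion applied at level $n-1$.

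For the second assertion I would first prove, by induction on $n$ in steps of $2$ with base cases $n = 0$ ($\phi_0(1) = 1$) and $n = 1$ ($\phi_1(1) = 2$), the auxiliary statement $(\star_n)$: the last letter of $\phi_n(1)$ equals $1$ if $n$ is even and $2$ if $n$ is odd, and the word $w_n$ obtained from $\phi_n(1)$ by deleting that last letter is a prefix of $\phi_n(2)$. For the inductive step one writes $\phi_n(1) = \phi_{n-1}(2) = \phi_{n-1}(3)\,\phi_{n-2}(1)$, so the last letter of $\phi_n(1)$ is that of $\phi_{n-2}(1)$ (parity follows) and $w_n = \phi_{n-1}(3)\,w_{n-2}$; since $\phi_n(2) = \phi_{n-1}(3)\,\phi_{n-1}(1)^{k_n}$ begins with $\phi_{n-1}(3)\,\phi_{n-1}(1) = \phi_{n-1}(3)\,\phi_{n-2}(2)$ and $w_{n-2}$ is a prefix of $\phi_{n-2}(2)$ by $(\star_{n-2})$, we conclude that $w_n$ is a prefix of $\phi_n(2)$.

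Granting $(\star)$, the second assertion follows at once: when $k_n \ge 2$ we have $k_n - 1 \ge 1$, so $\phi_n(3) = \phi_{n-1}(3)\,\phi_{n-1}(1)^{k_n-1}$ also begins with $\phi_{n-1}(3)\,\phi_{n-2}(2)$, hence $w_n = \phi_{n-1}(3)\,w_{n-2}$ is a prefix of $\phi_n(3)$ as well; and the last letter of $\phi_n(1)$ is $1$ or $2$ according to the parity of $n$ by $(\star_n)$. For $n = 1$ the statement is trivial, $w_1$ being empty.

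I do not anticipate a genuine difficulty: everything reduces to elementary prefix bookkeeping once the recursive identities are in place. The only subtle point is the choice of the invariant $(\star_n)$ — in particular realising that the right thing to propagate is that $\phi_n(1)$ minus its last letter is a prefix of $\phi_n(2)$ (rather than of $\phi_n(3)$), together with the parity of that last letter — so that the three assertions of the lemma, which refer to levels $n$, $n-1$ and $n-2$, can be made to close up into one induction.
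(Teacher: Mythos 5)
Your proof is correct and follows the same route as the paper, whose entire proof is the single line ``Direct by induction on $n$''; you have simply supplied the details that the authors omit. The recursive identities $\phi_n(2)=\phi_n(3)\,\phi_{n-1}(1)$ and $\phi_n(1)=\phi_{n-1}(3)\,\phi_{n-2}(1)$, together with your invariant $(\star_n)$, check out (I verified them on small cases), so this is a valid fleshing-out of the intended induction.
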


\begin{proof}
Direct by induction on $n$.
\end{proof}

\begin{lemma}\label{lem:h}
 If $2 \leq k_n \leq C$ or $k_n=1, k_{n-1} \geq 2$, then
 for every $j$ such that $A_{k_{n-j}} \cdots A_{k_{n}}$ is a full matrix,
 $$
 |\chi_{k_{n-j}} \circ \dots \circ \chi_{k_n}(v)|_u \leq 4C |\chi_{k_{n-j}} \circ \dots \circ \chi_{k_n}(v')|_u
 $$
 for all $v,v' \in V_n$ and $u \in \{1,2,3\}$.
In particular, $h_n(v) \leq 4C h_n(v')$ for all $v,v' \in V_n$.
\end{lemma}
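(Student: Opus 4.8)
The strategy is to bound the letter-counts $|\chi_{k_{n-j}} \circ \dots \circ \chi_{k_n}(v)|_u$ directly in terms of the entries of the product matrix $\tilde A = A_{k_{n-j}} \cdots A_{k_n}$, since these entries \emph{are} precisely those letter-counts: $(\tilde A)_{u,v} = |\chi_{k_{n-j}}\circ\dots\circ\chi_{k_n}(v)|_u$. So the claim is equivalent to the purely linear-algebraic statement that whenever such a product is a full (strictly positive) matrix and $2 \le k_n \le C$ or ($k_n = 1$, $k_{n-1}\ge 2$), then $(\tilde A)_{u,v} \le 4C\,(\tilde A)_{u,v'}$ for all columns $v,v'$ and all rows $u$; the statement about $h_n$ then follows by multiplying by $\tilde A_1 \cdots \tilde A_{n-j-1}$ on the left, which is a nonnegative matrix and only mixes columns with nonnegative weights, hence preserves columnwise inequalities up to the same constant.

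First I would reduce to comparing adjacent columns using the structure of $A_k$: since $A_k$ has column $3$ equal to column $2$ minus $e_1$ (indeed $\chi_k(3) = \chi_k(2)$ with one trailing $1$ removed), the columns of $\tilde A$ corresponding to $v=2$ and $v=3$ differ by at most the first column of $\tilde A_{\text{inner}}$ — more precisely, by Lemma~\ref{lem:prefix}, $\chi_{k_{n-j}}\circ\dots\circ\chi_{k_n}(3)$ is a prefix of $\chi_{k_{n-j}}\circ\dots\circ\chi_{k_n}(2)$, so column $3 \le$ column $2$ entrywise; and conversely column $2 \le$ column $1 +$ column $3$ roughly, from $2 \to 31^{k_n}$. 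The real work is to get a \emph{lower} bound on each entry of one column in terms of the largest entry overall, so that the ratio is controlled. Here I would exploit that $\tilde A$ is a full matrix built from a telescoping block of the form \eqref{eq:tele}, and that the hypothesis $k_n \le C$ (or $k_n=1$, $k_{n-1}\ge 2$) controls the \emph{last} one or two substitutions, which are the ones that determine how the columns of $\tilde A$ compare: the final factor $A_{k_n}$ (or $A_1 A_{k_{n-1}}$) has all entries in $\{0,1,\dots,C\}$ and is the matrix whose columns get combined by $\tilde A_{\text{inner}}$. Writing $\tilde A = \tilde A_{\text{inner}} \cdot A_{k_n}$ (resp.\ $\tilde A = \tilde A_{\text{inner}} \cdot A_{k_{n-1}} \cdot A_1$), each column of $\tilde A$ is a $\{0,\dots,C\}$-combination of at most three columns of $\tilde A_{\text{inner}}$, and since $\tilde A_{\text{inner}}$ itself has the shape analyzed in Proposition~\ref{prop:subshift} (rows nearly proportional, all entries $\ge 1$ once full), the columns of $\tilde A_{\text{inner}}$ are comparable to each other up to a uniform constant independent of the $k_i$'s. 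Combining, each column of $\tilde A$ lies between (a single column of $\tilde A_{\text{inner}}$) and $C$ times (the sum of the columns of $\tilde A_{\text{inner}}$), which is at most $3C$ times a single column, giving the factor $4C$ after absorbing the small combinatorial losses; I expect $4C$ to be comfortable rather than tight.

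The main obstacle is making the "columns of $\tilde A_{\text{inner}}$ are uniformly comparable" claim precise \emph{without} a lower bound on the $k_i$ appearing inside the block — the inner block may contain long runs of $1$'s (the $A_1^{2r}$ factors in \eqref{eq:tele}) and large $k_{i,j}$'s, so one must check that the near-proportionality of rows shown in the proof of Lemma~\ref{lem:unique} (the matrix $\tilde B$ there, and symmetrically $\tilde A$) forces the three columns to have ratios bounded by an absolute constant, uniformly over all admissible parameter choices. Concretely I would revisit the explicit form of a telescoped full block: its rows are almost scalar multiples of one another (this is exactly what made $\rho$ in \eqref{eq:rho} bounded in Lemma~\ref{lem:unique}), and "rows nearly proportional with a full matrix" forces "columns nearly proportional" by a short Hilbert-metric / cross-ratio argument. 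Once that uniform columnwise comparability of $\tilde A_{\text{inner}}$ is in hand, the passage to $\tilde A$ and then to $h_n$ is routine: left-multiplication by the nonnegative matrix $\tilde A_1 \cdots \tilde A_{n-j-1}$ (which includes multiplication by $(1,\dots,1)$ for the $h_n$ statement) preserves the inequality $\text{col}(v) \le 4C\,\text{col}(v')$ with the same constant, and I would present that as a one-line observation.
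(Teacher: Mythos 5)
Your reduction to the matrix statement $(\tilde A)_{u,v}\le 4C(\tilde A)_{u,v'}$ and the final passage to $h_n$ (left multiplication by a nonnegative row vector preserves columnwise inequalities) are both correct and match the paper. The gap is in your central claim that the columns of $\tilde A_{\mathrm{inner}}=A_{k_{n-j}}\cdots A_{k_{n-1}}$ are uniformly comparable, independently of the inner $k_i$'s. This is false. Already $A_K$ has columns $(0,1,0)^T$, $(K,0,1)^T$, $(K-1,0,1)^T$, which are not comparable at all; and even if the inner block is \emph{full}, writing it as $M\cdot A_K$ with $M$ full and $K$ large gives first column equal to $M_{\cdot 2}$ (moderate) while columns $2$ and $3$ equal $K M_{\cdot 1}+M_{\cdot 3}$ and $(K-1)M_{\cdot 1}+M_{\cdot 3}$ (of size $\sim K$), so the column ratio grows like $K$ and is not bounded by any absolute constant. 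The supporting heuristic you invoke --- that ``rows nearly proportional'' plus fullness forces ``columns nearly proportional'' --- is also false: the matrix with two identical rows $(1,K)$ has exactly proportional rows but column ratio $K$. So the step ``each column of $\tilde A$ lies between a single column of $\tilde A_{\mathrm{inner}}$ and $3C$ times a single column'' cannot be carried out as stated; the comparison of $\mathrm{col}_2^{\mathrm{inner}}$ (which becomes column $1$ of $\tilde A$) against $k_n\,\mathrm{col}_1^{\mathrm{inner}}+\mathrm{col}_3^{\mathrm{inner}}$ (column $2$ of $\tilde A$) is precisely the hard point, and it depends on the entire internal structure of the block, not just on $k_n$.

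The paper handles this by an explicit state-machine induction: it builds $\bA^m=A_{k_m}\cdots A_{k_n}$ by successive left multiplications with $\bD_m=A_{k_{m-2}}A_{k_{m-1}}$ and tracks, in four states, exact entrywise inequalities between the columns. The intermediate products (States 1--3) are exactly the matrices whose columns are \emph{not} uniformly comparable (they contain zero entries and rows like $(q,1-q,0)$), and the inequalities recorded there (e.g.\ $\max_j Q_j<3C\min_i Q_i-C$) are what eventually yield the $4C$ bound when the matrix first becomes full (State 4); the hypothesis $2\le k_n\le C$ (or $k_n=1$, $k_{n-1}\ge 2$) enters only to bound the entries of the rightmost factor, which seeds the induction. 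If you want to keep your ``factor out $A_{k_n}$'' viewpoint, you would still need to prove, by some induction over the inner block, a quantitative relation between $\mathrm{col}_1^{\mathrm{inner}},\mathrm{col}_2^{\mathrm{inner}},\mathrm{col}_3^{\mathrm{inner}}$ that survives arbitrary large inner $k_i$'s and long runs of $1$'s --- which is essentially what the paper's state machine does.
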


The proof is deferred to Section~\ref{sec:prooflemh}.

\begin{corollary}\label{cor:EU}
 If \eqref{eq:k2} holds and $\liminf_j k_j < \infty$, then the corresponding ITM is uniquely ergodic.
\end{corollary}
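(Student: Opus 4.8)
The plan is to show that the nested column cones $C_n := A_{k_1} \cdots A_{k_n}(\R_{\geq 0}^3)$ intersect in a single ray; by the characterization of unique ergodicity for primitive $S$-adic subshifts recalled in Section~\ref{sec:wm-measurable}, and since $(\Omega,T_{\alpha,\beta})$ is isomorphic to $(X,\sigma)$, this yields the corollary. Primitivity is already available from Proposition~\ref{prop:subshift}, so the real content is producing \emph{uniform} contraction of the $C_n$ in the Hilbert metric on the octant.

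First I would extract a good subsequence of indices from the hypotheses. Fix $C := \liminf_j k_j \in \N$, so $k_n \le C$ for infinitely many $n$. A short combinatorial argument shows that, together with \eqref{eq:k2}, there are in fact infinitely many indices $n$ with either $2 \le k_n \le C$ or ($k_n = 1$ and $k_{n-1} \ge 2$): if only finitely many such $n$ existed, then every large $n$ with $k_n \le C$ would have $k_n = 1$, and since the second alternative fails also $k_{n-1} = 1$; iterating backwards forces $k_i = 1$ for all large $i$, contradicting \eqref{eq:k2}. Call these the \emph{good} indices; they are precisely the indices where Lemma~\ref{lem:h} applies. Using primitivity, more precisely the forward-telescoping in the proof of Proposition~\ref{prop:subshift} which for any starting position $s$ produces a finite window $[s,s']$ with $A_{k_s}\cdots A_{k_{s'}}$ strictly positive, I would inductively choose good indices $n_1 < n_2 < \cdots$, sparse enough that $P_0 := A_{k_1}\cdots A_{k_{n_1}}$ and each block $B_i := A_{k_{n_i+1}}\cdots A_{k_{n_{i+1}}}$ is strictly positive. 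Here one uses that strict positivity of such a block is stable under multiplication by further $A_k$'s on the right (the columns of $PA_k$ are $p_2$, $kp_1+p_3$, $(k-1)p_1+p_3$ in terms of the columns $p_1,p_2,p_3$ of $P$), so only sparseness is required.

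Next, for each block $B_i$ the hypothesis of Lemma~\ref{lem:h} holds (it is a full matrix ending at the good index $n_{i+1}$), giving $(B_i)_{u,v} \le 4C\,(B_i)_{u,v'}$ for all $u,v,v'$. Hence the Birkhoff projective diameter $\Delta(B_i) = \log \max_{u,u',v,v'} \frac{(B_i)_{u,v}(B_i)_{u',v'}}{(B_i)_{u,v'}(B_i)_{u',v}} \le \log(16C^2) =: \Delta_0$, so each $B_i$ contracts the Hilbert metric on $\R_{\ge 0}^3$ by a factor at most $\kappa := \tanh(\tfrac14\Delta_0) < 1$, uniformly in $i$ (Birkhoff's theorem, \cite[Section 8.6]{B23}). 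Since $C_{n_m} = P_0 B_1 \cdots B_{m-1}(\R_{\geq 0}^3)$ and $P_0$ is non-expansive for the Hilbert metric, iterating the contractions gives $\mathrm{diam}_H\!\big(B_1\cdots B_{m-1}(\R_{\ge0}^3)\big) \le \kappa^{m-2}\Delta_0$, hence $\mathrm{diam}_H(C_{n_m}) \to 0$. As these cones lie in a fixed compact subset of the interior of the octant for $m \ge 2$, their projections $\pi(C_{n_m})$ have Euclidean diameter tending to $0$, so $\bigcap_n \pi(C_n) = \bigcap_m \pi(C_{n_m})$ is a single point of $\Sigma$. This is exactly the criterion for unique ergodicity stated in Section~\ref{sec:wm-measurable}, and the corollary follows.

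The main obstacle is the bookkeeping in the second paragraph: one must arrange the telescoping so that the blocks between consecutive chosen indices are \emph{simultaneously} strictly positive (which needs primitivity, Proposition~\ref{prop:subshift}) \emph{and} end at an index where $k$ is bounded (which needs $\liminf_j k_j < \infty$ together with the combinatorial observation above), because only then does Lemma~\ref{lem:h} deliver the uniform bound on the entry ratios that drives the Birkhoff contraction. Once this setup is in place, the Hilbert-metric estimate and the conclusion are routine.
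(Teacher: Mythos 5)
Your proposal is correct and follows essentially the same route as the paper: telescope the sequence into blocks whose associated matrices are strictly positive and end at an index with $2\le k_{n_i}\le C$ (or $k_{n_i}=1$, $k_{n_i-1}\ge 2$), invoke Lemma~\ref{lem:h} to bound the column ratios by $4C$, and conclude uniform Hilbert-metric contraction of the nested cones, hence unique ergodicity. Your additional care in checking that infinitely many such ``good'' indices exist, and that strict positivity survives right-multiplication by further $A_k$'s, only makes explicit what the paper leaves implicit.
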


\begin{proof}
 We telescope the sequence $\chi_{k_j}$ into blocks $\chi_{ k_{n_{i-1}+1} } \circ \cdots \circ \chi_{k_{n_i}}$ such that $\tilde{A_{i}} = A_{k_{n_{i-1}+1} } \cdots A_{k_{n_i}}$ is a full matrix and $k_{n_i} \leq C$ (and if $k_{n_i} = 1$ then $k_{n_{i-1}} \geq 2$) for all $i$.
 Using Lemma~\ref{lem:h} we estimate 
  $$
 \rho(L, L') := \sqrt{
 	\frac{ \max\{ L_u/L'_u : u=1,2,3\}  }{ \min\{ L_u/L'_u : u=1,2,3\} } } \ \text{ with $L, L'$ columns of $\tilde{A_{i}}$}
 $$
 for the Hilbert metric\footnote{This time we multiply by column vector on the right, so the notation is exactly as in \cite[Formula 8.29]{B23} and transposed compared to \eqref{eq:rho}.}. For these associated matrices, we get $\rho \leq 4C$. This gives a contraction factor $\tanh(\frac12 \log(\rho)) < 1$ for the Hilbert metric, independently of $i$. Hence, unique ergodicity follows.
\end{proof}

In the sequel, write $\tilde \chi_{n} = \chi_{\tilde k_1} \circ \cdots \circ  \chi_{\tilde k_m}$ where $m = m(n)$. 
We will use Bratteli-Vershik system arguments, but still prefer to keep a metric there that agrees to the metric on the subshift. 
Hence if $(X,\tau)$ is the Bratteli-Vershik system containing distinct edge-labeled paths $x$ and $y$, then
we set $d(x,y) = 2^{-n}$ for $n = \min\{ j \geq 0 : \tau^j(x)_1 \neq \tau^j(y)_1\}$.

For $x \in X_{BV}$ and $n \in \N$ fixed, define recursively
\begin{equation}\label{eq:y}
y^0 = (y^0_m)_{m \ge 1} = x \quad \text{ and } \quad
y^{\ell+1} = \tau^{h_n(\fs(y^{\ell}_{n+1}))}(y^{\ell}).
\end{equation}

\begin{lemma}\label{lem:distance}
For every $C \geq 2$, there is $n_0 \in \N$ such that for all $n > n_0$ for which
$\tilde k_m \leq C$ and if $\tilde k_m=1$, then $\tilde k_{m-1} \geq 2$, we have
\begin{equation}\label{eq:muv}
 \mu( \{ x \in [e] : d(x, y^{\ell} ) < 2^{-n/16C}\}) \geq \frac{1}{16C} \mu([e]),
 \end{equation}
for all $\ell  \geq 0$ and $e \in E_{n+1}$, where $ [ e] = \{x \in X_{BV} : x_{n+1} = e \}$.
\end{lemma}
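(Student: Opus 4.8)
# Proof proposal for Lemma~\ref{lem:distance}

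The plan is to estimate how much the Vershik tower structure at level $n$ is disturbed when we apply the map $y^\ell \mapsto y^{\ell+1} = \tau^{h_n(\fs(y^\ell_{n+1}))}(y^\ell)$. The key observation is that adding $h_n(\fs(y^\ell_{n+1}))$ to a path in the Bratteli–Vershik system corresponds, at the level of the subshift, to shifting by exactly one $\tilde\chi_n$-block of the source symbol $\fs(y^\ell_{n+1})$. Concretely, if $x$ lies in the tower over an edge $e \in E_{n+1}$ with source $v = \fs(e)$, then within the column $\tilde\chi_{n+1}(\cdot)$ of the letter $a_{n+1}$ labelling $x_{n+2}$, the point $x$ sits at some position, and $\tau^{h_n(v)}(x)$ moves forward past one full occurrence of $\tilde\chi_n(v)$ inside $\tilde\chi_{n+1}(a_{n+1})$, i.e.\ one ``$v$-subtower''. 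So the question is: for what fraction of positions $x$ in the tower over $e$ does this shift keep $x$ and $y^\ell$ agreeing on a long prefix (and suffix) — long enough that $d(x,y^\ell) < 2^{-n/16C}$?

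First I would set up the combinatorial picture. Fix $n$ large with $\tilde k_m \leq C$ (and $\tilde k_{m-1}\geq 2$ if $\tilde k_m = 1$), so that Lemma~\ref{lem:h} applies: all $h_n(v)$ are comparable up to a factor $4C$, and all block-lengths $|\tilde\chi_n(v)|$ and coordinate-counts $|\tilde\chi_n(v)|_u$ are comparable up to $4C$. The point $x \in [e]$ determines its vertical position $t \in \{0,1,\dots, h_{n+1}(\fs(x_{n+2})) - 1\}$ in the tower over $x_{n+2}$, or more precisely its position inside the word $\tilde\chi_{n+1}(a_{n+1})$. Applying $\tau^{h_n(v)}$ advances this position by $h_n(v)$. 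The point $y^\ell$ differs from $x$ on the first coordinate only within a window of size $h_n(v) \leq 4C\cdot\min_u h_n(u)$ around the ``seam'' where the shift crosses a $\tilde\chi_n$-block boundary inside $\tilde\chi_{n+1}(a_{n+1})$. Using the prefix structure from Lemma~\ref{lem:prefix} (each $\tilde\chi_n(v)$ shares a long common prefix with $\tilde\chi_n(v')$, differing only near the end), I would argue that as long as $x$ is not within distance $\approx h_n(v)$ of such a seam, $x$ and $y^\ell$ agree on a block of length comparable to $\min_v |\tilde\chi_n(v)| \geq h_n(\cdot)/(4C)$ centered appropriately — hence $d(x,y^\ell) \leq 2^{-h_n/(4C)} < 2^{-n/16C}$ once $n$ is large (here one uses that $h_n \gtrsim n$, which follows from primitivity/the telescoping producing strictly positive matrices, so column sums grow at least linearly).

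Next I would count. Within the tower over $e$, the number of ``bad'' positions $t$ — those within $h_n(v)$ of a seam — is at most $h_n(v)$ times the number of $\tilde\chi_n$-subtowers stacked in $\tilde\chi_{n+1}(a_{n+1})$, which is $|\tilde\chi_{n+1}(a_{n+1})| / \min_v|\tilde\chi_n(v)|$ up to the constant $4C$. This is a $\leq 4C \cdot h_n(v)/\min_v|\tilde\chi_n(v)| \leq (4C)^2 \cdot h_n(v)/h_n(v) = (4C)^2$... — wait, I need to be more careful: the bad fraction is (number of seams)$\times h_n(v)$ divided by total height, i.e.\ bounded by a constant multiple of $h_n(v)/\min_v |\tilde\chi_n(v)|$, which by Lemma~\ref{lem:h} is $\leq 4C \cdot \max_v h_n(v)/\min_v h_n(v) \cdot$ (geometry) — this is only $O(C)$, not small. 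So I would instead \emph{further telescope}: choose $\tilde\chi_n$ to absorb many levels, so that $\min_v|\tilde\chi_n(v)|$ is much larger than $\max_v h_{n-1}(v)$ — but $h_n(v)$ here is the full height, not $h_{n-1}$, so the relevant ratio is the number of $\tilde\chi_n(v)$-blocks appearing inside $\tilde\chi_{n+1}(a)$, which we can make $\geq 16C\cdot\,$(something) by telescoping $\tilde\chi_{n+1}$ to contain enough applications. Since each $\tilde\chi_n$-block crossed contributes a bad window of relative width $\leq 1/(\text{number of blocks})$ times $4C$, choosing the telescoping so that $\tilde\chi_{n+1}(a)$ contains at least $(16C)^2$ copies of each $\tilde\chi_n(v)$ makes the bad fraction $\leq 1/(16C)$. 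This uniform control across all $a$ uses Lemma~\ref{lem:h} again. Finally, since $\mu$ restricted to $[e]$ is (by the tower structure and $\tau$-invariance) uniform over the $h_{n+1}$ vertical positions up to the measure of the base, ``fraction of good positions $\geq 1 - 1/(16C) \geq 15/16$'' translates into \eqref{eq:muv}, with room to spare, and crucially the bad set does not depend on $\ell$ in a way that accumulates — since each $y^\ell$ is a fixed power of $\tau$ applied to $x$, the estimate is the same for every $\ell$.

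The main obstacle I expect is the bookkeeping of the ``seam'' windows: one must verify that crossing a $\tilde\chi_n$-block boundary really does only disturb a prefix/suffix of length $\leq 4C\cdot h_n(\cdot)$ and not more, which requires combining the prefix structure of Lemma~\ref{lem:prefix} (so that neighbouring $\tilde\chi_n(v)$-blocks look alike except near their ends) with the length comparabilities of Lemma~\ref{lem:h}, and then propagating this through the choice of telescoping. Getting the constant to come out as $1/(16C)$ rather than merely ``some positive constant'' is just a matter of choosing the telescoping block $\tilde\chi_{n+1}$ large enough, which is always possible by primitivity; I would state this choice of $n_0$ (equivalently, of the telescoping) explicitly at the start of the proof and then run the counting argument.
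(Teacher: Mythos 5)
There is a genuine gap. You aim to show that the \emph{bad} set has relative measure $\leq \frac{1}{16C}$, i.e.\ that a fraction $\geq 1-\frac{1}{16C}$ of $[e]$ is good; but the structure available only supports a good fraction of order $\frac{1}{C}$, which is all the lemma asks for. The point $y^{\ell}$ reads the sequence starting $\ell$ whole $n$-blocks ahead of $x$, so for $d(x,y^{\ell})$ to be small you must compare $B[j..j+q]$ with $B^{(\ell)}[j..j+q]$, where $B$ and $B^{(\ell)}$ are images $\tilde\chi_1\circ\cdots\circ\tilde\chi_n(v)$ of possibly \emph{different} letters $v$ and $j$ is the offset of $x$ in its block. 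Lemma~\ref{lem:prefix} only guarantees agreement of these blocks on a common prefix whose length is of order $\frac{1}{C}\max_v h_n(v)$ (and the blocks themselves have lengths differing by factors up to $4C$ by Lemma~\ref{lem:h}), so agreement is only certified for offsets $j$ in roughly the first half of that common prefix --- a $\frac{1}{16C}$-fraction of the tower, not everything outside a ``seam window''. Your heuristic that disagreement is confined to a window of width $h_n(v)$ around the seam is therefore unjustified: disagreement can occur at any offset beyond the common prefix, because the blocks differ in length and content there. Relatedly, your proposed fix (telescoping $\tilde\chi_{n+1}$ so that it contains many copies of each $\tilde\chi_n(v)$) controls the number of $n$-blocks inside an $(n+1)$-block, which is not the relevant quantity; the obstruction lives inside a single pair of consecutive (or $\ell$-separated) $n$-blocks.

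A second, related problem is uniformity in $\ell$. You assert that the bad set ``does not accumulate'' over $\ell$, but in your seam picture each $y^{\ell}$ crosses a different seam, and moreover the offset of $y^{\ell}$ within its block is only preserved under the successive jumps if that offset is smaller than \emph{all} intermediate block heights $h_n(\cdot)$ --- which can be as small as $\frac{1}{4C}\max_v h_n(v)$. The paper's proof secures both points at once by taking the good set to be exactly the first $q=\lfloor\frac12|\tilde\chi_1\circ\cdots\circ\tilde\chi_{n-1}(W)|\rfloor$ levels of the subtower over $\fs(e)$, where $W$ is the explicit common prefix of all $\tilde\chi_n(v)$: this set has measure $\geq\frac{1}{16C}\mu([e])$, every $y^{\ell}$ then sits at the \emph{same} offset inside the common prefix of its own block, and the distance bound follows for all $\ell$ simultaneously. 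Reorienting your argument toward exhibiting this small good set, rather than bounding the bad set, is what is needed.
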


\begin{proof}
The telescoping is such that $\tilde \chi_{n}(v) = \chi_{\tilde k_1} \circ \cdots \circ \chi_{\tilde k_m}(v)$ 
satisfies \eqref{eq:tele1};
in particular, for each $v \in V_{n}$, $\tilde \chi_{n}(v)$ contains every symbol in $V_{n-1}$
multiple times, and $m  = m(n)$ is so large that there are
a minimal even $j' \geq 2$ and a minimal odd $j'' \geq 2$ both such that $\tilde k_{m-j'}, \tilde k_{m-j''} \geq 2$.
Due to \eqref{eq:k2} such $j'$ and $j''$ can always be found.
Set $j = \max\{ j', j''\}$.
Then $\chi_{\tilde k_{m-j}} \circ \cdots \circ \chi_{\tilde k_m}(v)$, $v \in V_n$, have a maximal common prefix $W'$ that contains every symbol.
In the light of Lemma~\ref{lem:prefix}, $W'$ equals $\chi_{\tilde k_{m-j}} \circ \cdots \circ \chi_{\tilde k_m}(1)$ minus its last letter if  $\tilde k_m \geq 2$,
and $W' =  \chi_{\tilde k_{m-j}} \circ \cdots \circ \chi_{\tilde k_m}(3)$ if $\tilde k_{m-1} > \tilde k_m = 1$.

Clearly
$$
W :=  \chi_{\tilde k_1} \circ \cdots \circ \chi_{\tilde k_{m-j-1}}(W')
$$
is a common prefix of $\tilde \chi_{n}(v)$, $v \in V_{n}$,
and it contains every symbol as well.
By Lemma~\ref{lem:prefix} we have
$$
q := \left\lfloor\ \frac12 |\tilde \chi_1 \circ \cdots \circ \tilde \chi_{n-1}(W)|\ \right\rfloor \geq \frac{1}{16 C} \max_{v \in V_{n}} |\tilde \chi_1 \circ \cdots \circ \tilde \chi_{n}(v)|.
$$

Then as all paths from $v_0$ to $\ft(e)$ in the Bratteli diagram have the same mass, the union of the first $q$ such paths in $[e]$ has mass $\geq \frac{1}{16C} \mu([e])$.

Take $v = \fs(e)$ and let $x_{\min}(v)$ be the minimal path from $v_0$ to $v$. For any
$x \in [e] \cap \tau^j([x_{\min}(v)])$ with $0 \leq j \leq q$, if follows that
$\tau^{h_n(v)}(x) \in \tau^j([x_{\min}(v')])$, where $v' \in V_{n}$ is the source of the successor edge to $x_{n+1}$ (or if $x_{n+1}$ is the maximal incoming edge, $v'\in V_n$ is the source of $\tau^{h_n(v)}(x)_{n+1}$).
Because $q$ is only half of the length of the common prefix $W$,
$d(x,y^1) = d(x, \tau^{h_n(v)}(x)) \leq 2^{-n/16C}$.
Since this is true for all  $x \in \tau^j([x_{\min}(v)])$ and $0 \leq j \leq q$, and for every $\ell \geq 0$, 
$y^{\ell} \in \tau^j([x_{\min}(v'')])$ for some $v'' \in V_{n}$, the lemma follows.
\end{proof}

\begin{proof}[Proof of Theorem~\ref{thm:wm_liminfk}]
Let $C := 3+\liminf_j k_j$.
We can assume that the telescoping of the BV-diagram can be done in such a way that \eqref{eq:tele1} holds,
and the last matrix in each telescoped block either has subscript $2 \leq k \leq C$ or is equal to $1$, but then the subscript
of the penultimate matrix is $\geq 2$.
Let $\vec \ev_0 = \vec \ev$ and $\vec \zeta_{n+1} = \vec \ev_n \cdot \tilde A_{n+1}$
and $\vec \ev_{n+1} =  \vec \zeta_{n+1} - \vec z_{n+1} \in (-\frac12,\frac12]^3$,
where $\vec z_{n+1} \in \Z^3$ is the closest integer vector to $\vec \zeta_{n+1}$.
By assumption, $\vec \ev \notin W^s(\vec 0) \bmod 1$, therefore there must be $\eta > 0$ such that
$ \|\vec \zeta_{n+1} \|_1 > \eta$ infinitely often.

Assume by contradiction that \eqref{eq:meas_ev} holds.
 Let $g(x) = \lim_{n\to\infty} g_n(x)$ wherever it converges.
 By Lusin's Theorem, for every $\delta > 0$, there is a compact subset
 $X' \subset X_{BV}$ such that $\mu(X_{BV} \setminus X') < \delta$, $g$ is uniformly continuous on $X'$
  and $\lim_{n\to\infty} g_n(x) = g(x)$ for all $x \in X'$. Pick $\delta < 1/(144C)$.
Then, for $\eps \in (0,\eta/(288C))$ arbitrary, there is $N \in \N$ such that
for every $n > N$ and $x,y \in X'$ such that $d(x,y) < 2^{-N/16C}$ we have
\begin{equation}\label{eq:cont}
 |g(x)-g(y)| < \eps \quad \text{ and } \quad |g_n(x)-g(x)| < \eps.
\end{equation}
It also follows that $|g_{n+1}(x) - g_n(x)| < 2\eps$ for all $n \geq N$.

Let $X_{n} = \{x \in \tau^j([x_{\min}(v)]) : v \in V_{n}, 0 \leq j < q\}$
with the notation $q$ and $x_{\min}(v)$ from Lemma~\ref{lem:distance}.
In fact, $X_{n}$ consists of all the paths $x \in X$ such that the finite path $(x_1, \dots, x_n)$
is among the first $q$ paths from $v_0$ to $\fs(x_{n+1})$ and $\mu(X_{n}) \geq 1/(16C)$.

Recalling also the points $y^{\ell}$ from \eqref{eq:y}, we get from Lemma~\ref{lem:distance} that 
\begin{equation}\label{eq:ee}
d(x,y^{\ell}) < 2^{-n/16C} \text{ for all } x \in X_{n} \text{ and } \ell \geq 0.
\end{equation}

Now take $x \in X_{n}$ and corresponding $y^{\ell}$ such that
$x, y^{\ell} \in X'$ and $x_{n+2} = y^j_{n+2}$ for all $0 \leq j \leq \ell$.
Then
\begin{eqnarray}\label{eq:v}
 \tb \sum_{j=0}^{\ell-1} \ev h_n(\fs(y^j_{n+1})) \tb &=&
 \tb \ev \cdot (\tilde S_n(x)-\tilde S_n(y^\ell)) \tb \nonumber \\[-4mm]
 &=& \tb \ev \cdot \tilde S_n(x) \bmod 1 - \ev \cdot  \tilde S_n(y^\ell) \bmod 1 \tb =
 \tb g_n(x) - g_n(y^\ell) \tb  \nonumber \\
&\leq& \tb g_n(x) - g(x) \tb + \tb g(x) - g(y^\ell) \tb + \tb g(y^\ell) - g_n(y^\ell) \tb < 3\eps.
\end{eqnarray}
Now take $n \geq N$ such that $\| \vec \zeta_{n+1} \| \geq \eta$.
Notice that the components of $\vec \ev_n$ are
$\ev_n(v) = \ev h_n(v) \bmod 1 \in (-\frac12, \frac12]$.
\begin{eqnarray*}
\vec \zeta_{n+1} &=& \vec \ev_n \cdot \tilde A_{n+1} = \left(\ev_n(1), \ev_n(2), \ev_n(3) \right) \cdot \tilde A_{n+1} \\
&=& \left(\sum_{j=1}^{|\tilde \chi_{n+1}(1)|}
\ev_n(\tilde\chi_{n+1}(1)_j),
\sum_{j=1}^{|\tilde \chi_{n+1}(2)|} \ev_n( \tilde \chi_{n+1}(2)_j),
\sum_{j=1}^{|\tilde \chi_{n+1}(3)|} \ev_n( \tilde \chi_{n+1}(3)_j) \right).
\end{eqnarray*}
By \eqref{eq:tele1}, for each $v \in V_n$, the frequencies
$f_v(w)$
differ among the $w \in V_{n+1}$ by no more than a factor $1+\eps_{n+1}$,
where we choose $\eps_{n+1}$ to be the smallest positive value
among the sums $\left| \sum_{v \in V_n} \ev_n(v) f_v(w) \right|$ for $w \in V_{n+1}$.
Lemma~\ref{lem:h} applied to the block of substitution
that produces $\tilde \chi_{n+1}$ gives $|\tilde \chi_{n+1}(w)| \leq 4C |\tilde \chi_{n+1}(w')|$ for all $w,w' \in V_{n+1}$.
%and also that the frequencies $f_v(w) \geq 1/4C$ for all $v \in V_n, w \in V_{n+1}$.

Since $\| \vec \zeta_{n+1}\|_1 \geq \eta$, we can pick $w \in V_{n+1}$ such that
$\left| \sum_{j=1}^{|\tilde \chi_{n+1}(w)|} \ev_n( \tilde \chi_{n+1}(w)_j) \right| \geq \eta/3$.
First assume that $\sum_{v \in V_n} \xi_n(v) f_v(w) > 0$.
Then, recalling that $\sum_{v \in V_n} |\ev_n(v)| f_v(w) \leq \frac12$,
we have
\begin{eqnarray*}
\frac{\eta}{24C}
&\leq& \frac{1}{8C} \sum_{j=1}^{|\tilde \chi_{n+1}(w)|} \ev_n( \tilde \chi_{n+1}(w)_j) \leq
\frac{1}{4C} | \tilde \chi_{n+1}(w)| \cdot \frac12
 \sum_{v \in V_n} \ev_n(v) f_v(w) \\
&\leq& \frac{1}{4C} |\tilde \chi_{n+1}(w)| \left(
 \sum_{v \in V_n} \ev_n(v) f_v(w)
 - \eps_{n+1}  \sum_{v \in V_n} |\ev_n(v)| f_v(w)  \right) \\
&\leq& \frac{1}{4C} |\tilde \chi_{n+1}(w)|  \left(
 \sum_{\ev_n(v) > 0} \ev_n(v) \frac{f_v(w)}{1+\eps_{n+1}}
 + \sum_{\ev_n(v) < 0} \ev_n(v) f_v(w)(1+\eps_{n+1}) \right)\\
 &\leq& |\tilde \chi_{n+1}(w')|  \left(
 \sum_{\ev_n(v) > 0} \ev_n(v) f_v(w')
 + \sum_{\ev_n(v) < 0} \ev_n(v) f_v(w') \right)\\
&=& |\tilde \chi_{n+1}(w')| \,  \sum_{v \in V_n} \xi_n(v) f_v(w')
= \sum_{j=1}^{|\tilde \chi_{n+1}(w')|} \ev_n( \tilde \chi_{n+1}(w')_j).
\end{eqnarray*}
If on the other hand  $\sum_{v \in V_n} \xi_n(v) f_v(w) < 0$, we change signs:
\begin{eqnarray*}
\frac{\eta}{24C}
&\leq& \frac{1}{8C}
\left| \sum_{j=1}^{|\tilde \chi_{n+1}(w)|} \ev_n( \tilde \chi_{n+1}(w)_j) \right|
\leq \left| \frac{1}{4C} | \tilde \chi_{n+1}(w)| \cdot \frac12
 \sum_{v \in V_n} \ev_n(v) f_v(w) \right| \\
&\leq& -\frac{1}{4C} |\tilde \chi_{n+1}(w)| \left(
 \sum_{v \in V_n} \ev_n(v) f_v(w)
 - \eps_{n+1}  \sum_{v \in V_n} |\ev_n(v)| f_v(w)  \right) \\
&\leq& -\frac{1}{4C} |\tilde \chi_{n+1}(w)|  \left(
 \sum_{\ev_n(v) > 0} \ev_n(v) f_v(w)(1+\eps_{n+1})
 + \sum_{\ev_n(v) < 0} \ev_n(v) \frac{f_v(w)}{1+\eps_{n+1}} \right)\\
 &\leq& -|\tilde \chi_{n+1}(w')|  \left(
 \sum_{\ev_n(v) > 0} \ev_n(v) f_v(w')
 + \sum_{\ev_n(v) < 0} \ev_n(v) f_v(w') \right)\\
&=& -|\tilde \chi_{n+1}(w')| \,  \sum_{v \in V_n} \xi_n(v) f_v(w')
= \left| \sum_{j=1}^{|\tilde \chi_{n+1}(w')|} \ev_n( \tilde \chi_{n+1}(w')_j)\right|.
\end{eqnarray*}

Recall that the $\tilde \chi_{n+1}(w)$ gives the order of incoming edges to $w \in V_{n+1}$. Without loss of generality, we can assume that \eqref{eq:tele1} applies to any subword of $\tilde \chi_{n+1}(w)$ of length at least $|\tilde \chi_{n+1}(w)|/3$.
That is, if $1 \leq a < a+|\tilde \chi_{n+1}(w)|/3 < b$, then
\begin{equation}\label{eq:ab}
\frac{\eta}{72C} \leq \left|\sum_{j=a}^{b-1}\ev_n(\tilde \chi_{n+1}(w)_j) \right|.
\end{equation}
Indeed, since the frequencies of letters in all of these subwords are almost the same,
these sums (consisting of $b-a$ terms of three types $\xi_n(1), \xi_n(2)$ and $\xi_n(3)$) indicate almost collinear vectors of lengths almost proportional to
$|b-a|/|\tilde \chi_{n+1}(w)|$.

Now choose  the most frequent (measure-wise) $w \in V_{n+1}$ and $v \in V_n$; that is
$$
\mu(\{ x \in X'_{n} : \fs(x_{n+1}) = v, \ft(x_{n+1}) = w\}) \geq \frac{1}{144C}-\delta>0.
$$
For a path $u := u_1 \dots u_n \in \tau^j([x_{min}(v)] )$ from $v_0$ to $v \in V_n$ and $0\leq j \leq q$
and for each edge $a \in E_{n+1}$ with $\fs(a) = v$ and $\ft(a) = w$, let $x^{(a)}(u)$
be a path from $v_0$ to $w \in V_{n+1}$ such that
\begin{itemize}
	\item[(i)] $x_{n+1}^{(a)}(u) = a$, %is the $a$-th incoming edge into $w$ (so $\fs(x_{n+1}^{(a)}(u)) = v$, $\ft(x_{n+1}^{(a)}(u))=w$)
	and
	\item[(ii)] $x_k^{(a)}(u) = u_k$ for all $1 \leq k \leq n$.
\end{itemize}
Hence for edges $a,b \in E_{n+1}$ connecting $v \in V_n$ and $w \in V_{n+1}$ such that \eqref{eq:ab} holds. Then
$$
\mu(X' \cap [x^{(a)}(u)]) > \frac12 \mu([x^{(a)}(u)])
\quad \text{ and }\quad
\mu(X' \cap [x^{(b)}(u)]) > \frac12 \mu([x^{(b)}(u)]). 
$$
This implies that there exists $x \in [x^{(a)}(u)] \cap X'$ and $y \in [x^{(b)}(u)] \cap X'$ such that $y = y^{\ell}$
for some $\ell \geq 1$ in the sense of \eqref{eq:y} (because $y^{(j)} \mapsto y^{(j+1)}$ is measure-preserving).

Then $\tilde S_{n-1}(x^{(a)}(u)) = \tilde S_{n-1}(x^{(b)}(u))$ and hence $g_{n-1}(x^{(a)}(u)) =  g_{n-1}(x^{(b)}(u))$.
Because $\fs(a) = \fs(b)$ and $b-a > |\tilde \chi_{n+1}(w)|/3$, it follows from \eqref{eq:ab} that
$$
\left|g_n(x^{(b)}(u)) - g_n(x^{(a)}(u))\right| =
\left|\ev \cdot \left( \tilde S_n(x^{(b)}(u)) - \tilde S_n(x^{(a)}(u))\right) \right|
= \left| \sum_{j=a}^{b-1} \ev_n(\tilde \chi_{n+1}(w)_j) \right| \geq \frac{\eta}{72C}.
$$
Therefore
\begin{eqnarray*}
4\eps &\geq& \left|(g_n(x^{(b)}(u))-g_{n-1}(x^{(b)}(u)))
+ ( g_{n-1}(x^{(b)}(u)) - g_{n-1}(x^{(a)}(u))) \right. \\
&& \left. +\ (g_{n-1}(x^{(a)}(u))-g_{n}(x^{(a)}(u)) ) \right| \\[1mm]
&=&
|g_{n}(x^{(b)}(u)) - g_{n}(x^{(a)}(u))|
%= \left| \ev ( \tilde S_n(x^{(b)}(u)) - \tilde S_n(x^{(a)}(u)) ) \right|
\geq \frac{\eta}{72C},
\end{eqnarray*}
which contradicts the choice of $\eps$. This finishes the proof.
\end{proof}

\begin{corollary}
	A linearly recurrent ITM of infinite type is weakly mixing if and only if $\vec \ev$ does not belong to the stable space
	$W^s(\vec 0)  \bmod 1$ for $\ev \in (0,1)$. Furthermore, any measurable eigenvalue is continuous.
\end{corollary}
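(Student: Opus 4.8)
The plan is to bootstrap the statement from the results already proved, together with the known eigenvalue theory for linearly recurrent $S$-adic shifts. First I would record the standing consequences of linear recurrence. By Theorem~\ref{thm:LR} the sequence $(k_i)_{i\in\N}$ is bounded and the sets $\{i:k_{2i}>1\}$, $\{i:k_{2i-1}>1\}$ have bounded gaps; hence $\liminf_n k_n<\infty$, the ITM is uniquely ergodic (Corollary~\ref{cor:EU}), and the telescoping \eqref{eq:tele} has bounded block length, so it uses only finitely many distinct, strictly positive matrices $\tilde A_i$. Consequently the vectors $\tilde s_n(x)$ from \eqref{eq:wm2} --- each dominated by a column of $\tilde A_{n+1}$ --- take only finitely many values, so there is $B$ with $\|\tilde s_n(x)\|_1\le B$ for all $n$ and all $x$; and by Proposition~\ref{prop:AB} together with Lemma~\ref{lem:unique} the stable space $W^s(\vec 0)=\Span(\vec v_3)$ is one-dimensional, with $\|\vec v_3\,\bA^n\|\le C^{-1}\lambda_3^{\,n}$ for $\bA^n=\tilde A_1\cdots\tilde A_n$ and some $\lambda_3\in(0,1)$.

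Given this, the direction ``if no $\ev\in(0,1)$ satisfies $\vec\ev\in W^s(\vec 0)\bmod 1$, then the ITM is weakly mixing'' needs nothing new: since $\liminf_n k_n<\infty$, it is exactly Theorem~\ref{thm:wm_liminfk}. For the converse, suppose $\vec\ev\in W^s(\vec 0)\bmod 1$ for some $\ev\in(0,1)$, i.e.\ $\vec w:=\vec\ev-\vec m\in\Span(\vec v_3)$ for some $\vec m\in\Z^3$. Then $\|\vec w\,\bA^n\|$ decays geometrically, and since $\vec m\,\bA^n\in\Z^3$ we get $\max_v\tb(\vec\ev\,\bA^n)_v\tb\le\|\vec w\,\bA^n\|\le C'\lambda_3^{\,n}$. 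Since $\tilde s_n(x)$ has non-negative integer entries,
$$
\tb\langle\tilde s_n(x),\vec\ev\,\bA^n\rangle\tb\le\|\tilde s_n(x)\|_1\,\max_v\tb(\vec\ev\,\bA^n)_v\tb\le B\,C'\lambda_3^{\,n},
$$
so the summability condition \eqref{eq:wm2} holds. As the telescoped matrices are strictly positive and the Bratteli--Vershik system has a minimal path ($\chi_{k_i}\circ\chi_{k_{i+1}}$ is left-proper), Theorem~2 of \cite{DFM19} gives that $e^{2\pi i\ev}$ is a continuous, hence non-trivial, eigenvalue; thus the ITM is not weakly mixing. This establishes the equivalence.

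For the final assertion I would invoke the linearly recurrent eigenvalue theory recalled in Section~\ref{sec:host}: for linearly recurrent $S$-adic shifts, condition \eqref{eq:wm2} characterises \emph{all} (a priori only measurable) eigenvalues and every eigenfunction may be chosen continuous (\cite{CDHM03,DFM19}). Hence any measurable eigenvalue of the ITM is continuous. Combining this with Theorem~\ref{thm:wm} (a continuous eigenvalue forces $\vec\ev\in W^s(\vec 0)\bmod 1$) and the estimate above, one obtains, for linearly recurrent ITMs of infinite type, the chain: $e^{2\pi i\ev}$ is a measurable eigenvalue $\iff$ it is a continuous eigenvalue $\iff$ \eqref{eq:wm2} holds $\iff$ $\vec\ev\in W^s(\vec 0)\bmod 1$, and weak mixing is the negation of these over all $\ev\in(0,1)$.

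The main obstacle is the one genuinely new implication, that $\vec\ev\in W^s(\vec 0)\bmod 1$ forces \eqref{eq:wm2}: one must keep the factor $\tilde s_n(x)$ in \eqref{eq:wm2} under control, and the decisive use of linear recurrence is that the telescoped matrices $\tilde A_i$, hence the vectors $\tilde s_n(x)$, range over a finite set, so that the geometric decay of $\tb\vec\ev\,\bA^n\tb$ along the one-dimensional stable space survives multiplication by $\tilde s_n(x)$. Without linear recurrence this fails: in the theorem preceding Theorem~\ref{thm:generic} an unbounded $\tilde s_n$ is used precisely to destroy summability even when $\vec\ev\in W^s(\vec 0)\bmod 1$.
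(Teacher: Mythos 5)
Your argument is correct and follows the same skeleton as the paper's proof: one direction is exactly Theorem~\ref{thm:wm_liminfk} (applicable because linear recurrence forces $(k_i)$ bounded, hence $\liminf_n k_n<\infty$), and the other direction derives summability of an eigenvalue criterion from the exponential decay along the one-dimensional stable space. The difference is which criterion you sum. You work with \eqref{eq:wm2} from \cite{DFM19} and therefore have to control the factor $\tilde s_n(x)$; your observation that linear recurrence makes the telescoped matrices, hence the vectors $\tilde s_n(x)$, range over a finite set is correct and is a nice explicit use of the hypothesis. The paper instead quotes \cite{BDM05}: for linearly recurrent systems $e^{2\pi i\ev}$ is a measurable eigenvalue iff $\sum_n \tb \vec\ev\tilde A_1\cdots\tilde A_n\tb^2<\infty$ and continuous iff $\sum_n \tb \vec\ev\tilde A_1\cdots\tilde A_n\tb<\infty$; since exponential decay makes both sums converge simultaneously, this sidesteps $\tilde s_n$ entirely and delivers the ``measurable $=$ continuous'' statement in one stroke.

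One caveat on your last paragraph: the blanket assertion that for linearly recurrent $S$-adic shifts condition \eqref{eq:wm2} characterises \emph{all} measurable eigenvalues and that every eigenfunction may be chosen continuous is not a theorem you can cite --- the whole point of \cite{BDM05} is that the measurable and continuous criteria differ ($\ell^2$ versus $\ell^1$ summability), and linearly recurrent systems with non-continuous measurable eigenvalues exist. Your proof survives because you do not actually need that claim: the implication ``measurable eigenvalue $\Rightarrow\vec\ev\in W^s(\vec 0)\bmod 1$'' is the contrapositive of Theorem~\ref{thm:wm_liminfk} (not of Theorem~\ref{thm:wm}, which only handles continuous eigenvalues), and from there your estimate plus \cite{DFM19} closes the cycle measurable $\Rightarrow$ stable space $\Rightarrow$ \eqref{eq:wm2} $\Rightarrow$ continuous $\Rightarrow$ measurable. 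I would restate the final step that way rather than leaning on the incorrect general fact.
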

\begin{proof}
	It follows from Theorem~\ref{thm:wm_liminfk} that if $\vec \ev \not\in W^s(\vec 0)\bmod 1$, then the system is weakly mixing. If on the other hand $\vec \ev \in W^s(\vec 0) \bmod 1$, then the convergence of
	$\| \vec \ev A_{k_1} \cdots A_{k_n}\|$
	to zero is exponential. By \cite[Theorem 1]{BDM05}, a measurable eigenvalue $e^{2\pi i \ev}$ for linearly recurrent system exists if and only if
	$$
		\sum_{n\geq1} \tb \vec\ev \tilde{A}_1 \cdots \tilde{A}_n \tb^2 < \infty
	$$
	and additionally the eigenvalue is continuous if and only if
	$$
	\sum_{n\geq1} \tb \vec\ev \tilde{A}_1 \cdots \tilde{A}_n \tb < \infty.
	$$
	Thus if $\vec \ev \in W^s(\vec 0) \bmod 1$, then the sum converges in both cases and $e^{2\pi i \ev}$ is a continuous eigenvalue of the ITM.
\end{proof}

\subsection{The proof of Lemma~\ref{lem:h}}\label{sec:prooflemh}

\begin{proof}[Proof of Lemma~\ref{lem:h}]
 Let us write $\bA^m = A_{k_m} \cdots A_{k_n}$ for the matrix associated to the substitution $\chi_{k_m} \circ \dots \circ \chi_{k_n}$.
 Then $\bA^n = A_{k_n}$ and the columns $\bA^m_2 \geq \bA^m_3$ element-wise
 for every $m \leq n$.
 
Let
$$
\bD_m := A_{k_{m-2}} \cdot A_{k_{m-1}} =
\begin{pmatrix}
 k_{m-2} & k_{m-2}-1 & k_{m-2}-1 \\ 0 & k_{m-1} & k_{m-1}-1 \\ 1 & 1 & 1
\end{pmatrix}.
$$
be the matrix associated to the next pair of substitutions $\chi_{k_{m-2}} \circ \chi_{k_{m-1}}$.
Our proof is of algorithmic nature,
illustrated by the following scheme:

\begin{figure}[h]
\begin{center}
\begin{tikzpicture}[scale=0.5]
\node at (0.6,6) {\fbox{\quad$\begin{array}{c}\text{\bf State 1} \\[1mm] \begin{pmatrix} 
 0 & k_n & k_n-1 \\ 1 & 0 & 0 \\ r & rk_n+1 & r(k_n-1)+1 \end{pmatrix}\end{array}$\quad}};
\node at (16,6) {\fbox{\quad$\begin{array}{c}\text{\bf State 2} \\[1mm] \begin{pmatrix}
  0 & k_n & k_n-1 \\ Q_1 & Q_2 & Q_3 \\ R_1 & R_2 & R_3 \end{pmatrix}\end{array}$\quad}};
 \node at (0,-1) {\fbox{\quad$\begin{array}{c}\text{\bf State 3} \\[1mm] \begin{pmatrix}
  P_1 & P_2 & P_3 \\ q & 1-q & 0 \\ R_1 & R_2 & R_3 \end{pmatrix}\end{array}$\quad}};
 \node at (15,-1) {\fbox{\quad$\begin{array}{c}\text{\bf State 4} \\[1mm]\text{$\bA^m$ is full}\end{array}$\quad}};
%%%%
\draw[->] (7.9,6) -- (10.5,6); \node at (9.2,7.75) {$k_{m-1} > 1$}; \node at (9.2,6.75) {$k_{m-2}=1$};
\draw[->] (0,3.2) -- (0,1.8);  \node at (-1.9,2.4) {$k_{m-2} > 1$};
\draw[->] (5,-1) -- (11.2,-1); \node at (7.9,-1.75) {$k_{m-1} > 1$};
\draw[->] (15,3) -- (15,1); \node at (17,2) {$k_{m-2} > 1$};
\draw[->] (7.8,3) -- (11.5,0.8); \node at (6.2,2.5) {$k_{m-1} > 1$};	\node at (8,1.5) {$k_{m-2} > 1$};

\draw[->] (-6.5,5.5) .. controls (-9.5,5) and (-9.5,7) .. (-6.5,6.5);
  \node at (-8.2,7.3) {$k_{m-2} = 1$};
   \node at (-8.2,8.2) {$k_{m-1} =$};
\draw[->] (-5,-1.5) .. controls (-8,-2) and (-8,0) .. (-5,-0.5);
    \node at (-6.5,0.2) {$k_{m-1} = 1$};
\draw[->] (21.5,5.5) .. controls (23.8,5) and (23.8,7) .. (21.5,6.5);
   \node at (22.9,7.3) {$k_{m-2} = 1$};
 \end{tikzpicture}
\caption{Every arrow stands for left multiplication with $\bD_m$}\label{fig:SFT2}
\end{center}
\end{figure}
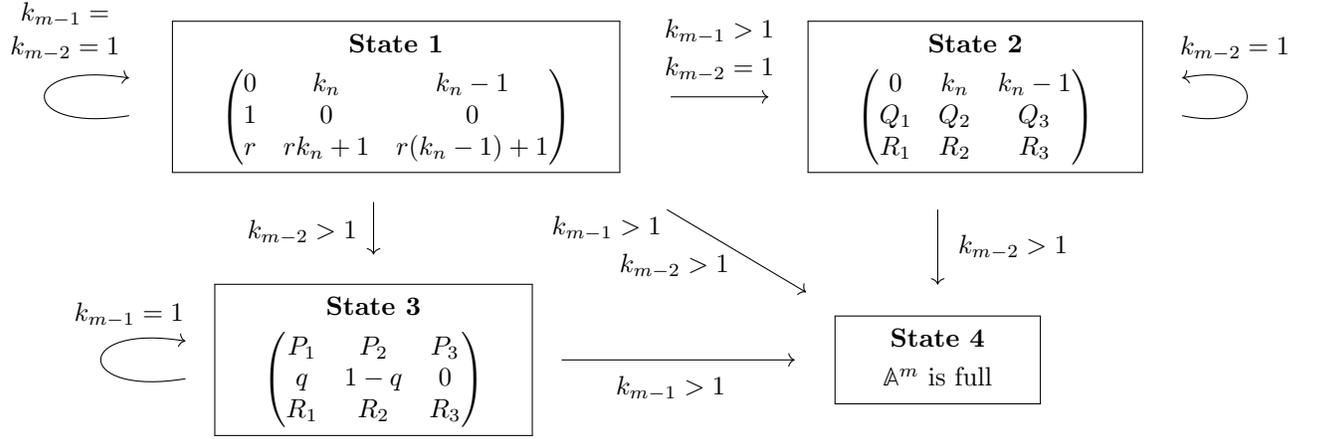
The properties of the matrices in each state are:
\begin{description}

 \item[State 1:] 
$\bA^m = \begin{pmatrix}
 0 & k_n & k_n-1 \\ 1 & 0 & 0 \\ r & rk_n+1 & r(k_n-1)+1
\end{pmatrix} 
$
for some integer $r \geq 0$.
\\
If $k_n \geq 2$, then $\bA^n$ is in this state, with $r=0$.

\item[State 2:] 
$$
\bA^m = \begin{pmatrix}
 0 & k_n & k_n-1 \\ Q_1 & Q_2 & Q_3 \\ R_1 & R_2 & R_3
\end{pmatrix} \qquad 
\begin{array}{rl} \text{for integers} &  1\leq \min_i Q_i \leq \max_j Q_j  < 3C\min_i Q_i -C \\ 
\text{ satisfying} & 1 \leq \min_i R_i \leq \max_j R_j < 3C\min_i R_i -C.
\end{array}
$$

 \item[State 3:] 
 $$
\bA^m = \begin{pmatrix}
 P_1 & P_2 & P_3 \\ q & 1-q & 0 \\ R_1 & R_2 & R_3
\end{pmatrix} \qquad 
\begin{array}{rl} \text{for integers} &  1 \leq \min_i P_i \leq \max_j P_j < 3C\min_i P_i \\ 
\text{ satisfying} 	&  1 \leq \min_i R_i \leq \max_j R_j < 3C\min_i R_i\\
					& q \in \{0,1\}.
\end{array}
$$
If $1=k_n < k_{n-1}$, then $\bA^n$ is in this state, with $q=0$.
 \item[State 4:] $\bA^m$ is full and  $(\max_i \bA^m_i)_u \leq 4C(\min_j \bA^m_j)_u$ for all $u\in\{1,2,3\}$.
 %If a repetition of State 1 happens, i.e. $r>0$, and $k_n \geq 3$ then the columns satisfy $\bA^m_1 \leq \bA^m_3 \leq \bA^m_2 \leq $C \bA^m_1$. 
 The lemma follows from these inequalities.
\end{description}

Now we verify the transitions between the states.

\begin{itemize}
\item From State 1 to State 1: $k_{m-1}  = k_{m-2} = 1$.
In this case
$$
\bD_m = \bJ := \begin{pmatrix} 1 & 0 & 0 \\ 0 & 1 & 0 \\ 1 & 1 & 1 \end{pmatrix},
$$
and we compute:
\begin{eqnarray}\label{eq:state1}
\bJ^r \cdot \bA^n = \begin{pmatrix} 1 & 0 & 0 \\ 0 & 1 & 0 \\ r & r & 1 \end{pmatrix}
 \cdot \begin{pmatrix} 0 & k_n & k_n-1 \\ 1 & 0 & 0 \\ 0 & 1 & 1 \end{pmatrix}
 = \begin{pmatrix}
 0 & k_n & k_n-1 \\ 1 & 0 & 0 \\ r & rk_n+1 & r(k_n-1)+1
\end{pmatrix},
\end{eqnarray}
as required.

 \item From State 1 to State 2: $k_{m-1} > 1 = k_{m-2}$.
 From \eqref{eq:state1} we get
\begin{eqnarray*}
\bA^{m-2} &=&
\begin{pmatrix}
 1 & 0 & 0 \\ 0 & k_{m-1} & k_{m-1}-1 \\ 1 & 1 & 1
\end{pmatrix} \cdot
 \begin{pmatrix}
 0 & k_n & k_n-1 \\ 1 & 0 & 0 \\ r & rk_n+1 & r(k_n-1) + 1
\end{pmatrix} \\[2mm]
&=&
 \begin{pmatrix}
 0 & k_n & k_n-1 \\
 X+k_{m-1} & X k_n+k_{m-1}-1 & X (k_n-1)+k_{m-1}-1\\
% (r+1)k_{m-1}-r & (k_{m-1}-1)(rk_n+1) & (k_{m-1}-1)(r(k_n-1) + 1) \\
 r+1 & (r+1)k_n+1 & (r+1)(k_n-1) + 1
\end{pmatrix}
\end{eqnarray*}
for $X = r(k_{m-1}-1) \geq 0$,
and the properties of State 2 hold.
\end{itemize}

\begin{itemize}
 \item From State 1 to State 3: $k_{m-2} > 1 = k_{m-1}$.
 From \eqref{eq:state1} we get
\begin{eqnarray*}
\bA^{m-2} &=&
\begin{pmatrix}
 k_{m-2} & k_{m-2}-1 & k_{m-2}-1 \\ 0 & 1 & 0 \\ 1 & 1 & 1
\end{pmatrix} \cdot
 \begin{pmatrix}
 0 & k_n & k_n-1 \\ 1 & 0 & 0 \\ r & rk_n+1 & r(k_n-1) + 1
\end{pmatrix} \\[2mm]
&=&
 \begin{pmatrix}
 Y-1 & Yk_n+ k_{m-2}-1 & Y(k_n-1) + k_{m-2}-1 \\
 1 & 0 & 0 \\
 r+1 & (r+1)k_n+1 & (r+1)(k_n-1) + 1
\end{pmatrix}
\end{eqnarray*}
for $Y = r(k_{m-2}-1) + k_{m-2} \geq 2$,
and the properties of State 3  hold with $q = 1$.

 \item From State 1 to State 4: $k_{m-2}, k_{m-1} > 1$.
 From \eqref{eq:state1} we get
\begin{eqnarray*}
\bA^{m-2} &=&
\begin{pmatrix}
 k_{m-2} & k_{m-2}-1 & k_{m-2}-1 \\ 0 & k_{m-1} & k_{m-1}-1 \\ 1 & 1 & 1
\end{pmatrix} \cdot
 \begin{pmatrix}
 0 & k_n & k_n-1 \\ 1 & 0 & 0 \\ r & rk_n+1 & r(k_n-1) + 1
\end{pmatrix} \\[2mm]
&=&
 \begin{pmatrix}
 Y-1 & Yk_n+ k_{m-2}-1 & Y(k_n-1) + k_{m-2}-1 \\
 X+k_{m-1} & X k_n+k_{m-1}-1 & X (k_n-1)+k_{m-1}-1\\
 r+1 & (r+1)k_n+1 & (r+1)(k_n-1) + 1
\end{pmatrix}
\end{eqnarray*}
for $X = r(k_{m-1}-1) \geq 0$, $Y = r(k_{m-2}-1) + k_{m-2} \geq 2$,
and the properties of State 4 hold.

\item From State 2 to State 2: $k_{m-1} \geq 1 = k_{m-2}$.
Left multiplication with
$$
\bD_m =
\begin{pmatrix}
 1 & 0 & 0 \\ 0 & k_{m-1} & k_{m-1}-1 \\ 1 & 1 & 1
\end{pmatrix}
$$
leaves the first row of $\bA^m$ unchanged. For the second row the inequalities for the new $\tilde{Q_i}$ follow from
\begin{align*}
\tilde{Q_i} + C&=  k_{m-1}Q_i + (k_{m-1}-1)R_i + C\\
		&< 3Ck_{m-1}Q_j + 3C (k_{m-1}-1)R_j = 3C\tilde{Q_j}
\end{align*}
and for the last row
\begin{align*}
	\tilde{R_i} + C &\leq Q_i + R_i + 2C < 3CQ_j + 3CR_j = 3C\tilde{R_j}.
\end{align*}
So the conditions of State 2 remain valid.

\item From State 3 to State 3: $k_{m-2} \geq 1 = k_{m-1}$.
Left multiplication with
$$
\bD_m = \begin{pmatrix}
 k_{m-2} & k_{m-2}-1 & k_{m-2}-1 \\ 0 & 1 & 0 \\ 1 & 1 & 1
\end{pmatrix}
$$
keeps the conditions of State 3 valid. By $R_i + 1 \leq 3CR_j$ it holds that for the new $\tilde{P}_1$
$$
	\tilde{P}_1 = P_1 + R_1 + q < 3CP_i + 3CR_i \leq 3C\tilde{P}_i
$$
for all $i$ and so on for the other entries.

\item From State 2 to State 4: $k_{m-2} > 1$.
Left multiplication with $\bD_m$ gives
\begin{align*}
 \bA^{m-2} & =  \bD_m  \cdot \bA^m =
 \begin{pmatrix}
 k_{m-2} & k_{m-2}-1 & k_{m-2}-1 \\ 0 & k_{m-1} & k_{m-1}-1 \\ 1 & 1 & 1
\end{pmatrix} \cdot 
 \begin{pmatrix}
         0 & k_n & k_n-1 \\
         Q_1 & Q_2 & Q_3 \\
         R_1 & R_2 & R_3
        \end{pmatrix}  \\[2mm]
& = \begin{pmatrix}
  (k_{m-2}-1)(Q_1+R_1)\quad & k_n k_{m-2} +  &
  (k_n-1)k_{m-2} + \\
  & (k_{m-2}-1)(Q_2 + R_2) & (k_{m-2}-1)(Q_3 + R_3) \\[2mm]
   k_{m-1} Q_1 + (k_{m-1}-1)R_1 &  k_{m-1} Q_2 + (k_{m-1}-1)R_2 &
    k_{m-1} Q_3 + (k_{m-1}-1)R_3   \\[1mm]
  Q_1+R_1 & k_n+Q_2 + R_2 & k_n-1+Q_3 + R_3
\end{pmatrix}
\end{align*}
keeps the inequality in the second and third row as before. For the first row
\begin{align*}
	\tilde{P}_2 &= k_n k_{m-2} + (k_{m-2}-1)(Q_2 + R_2) \\
				&\leq (k_{m-2}-1)(Q_2 + R_2 + C) + C \\
				&< 4C(k_{m-2}-1)(Q_j + R_j) \leq 4C\tilde{P_j}
\end{align*}
and analogously the inequalities hold for the other entries. Thus the conditions of State 4 hold.

\item From State 3 to State 4: $k_{m-2} > 1$.
Left multiplication with $\bD_m$ gives
\begin{align*}
 \bA^{m-2} & =  \bD_m  \cdot \bA^m =  \begin{pmatrix}
 k_{m-2} & k_{m-2}-1 & k_{m-2}-1 \\ 0 & k_{m-1} & k_{m-1}-1 \\ 1 & 1 & 1
\end{pmatrix} \cdot \begin{pmatrix}
         P_1 & P_2 & P_3 \\
         q & 1-q & 0\\
         R_1 & R_2 & R_3
        \end{pmatrix} = \\[2mm]
& =  \begin{pmatrix}
  k_{m-2}P_1 +   & k_{m-2}P_2 +  &  k_{m-2}P_3 +   \\
(k_{m-2}-1)(q+R_1)  & (k_{m-2}-1)(R_2 + 1 - q)  &
(k_{m-2}-1)R_3  \\[2mm]
  qk_{m-1} + (k_{m-1}-1)R_1 &  (1-q)k_{m-1} + (k_{m-1}-1)R_{2}  & (k_{m-1}-1)R_{3}\\[1mm]
  P_1+q+R_1 & P_2 + 1-q + R_2 & P_3 + R_3
\end{pmatrix}
\end{align*}
keeps the required inequalities of the first and third row as before. For the second row we see that 
\begin{align*}
	\tilde{Q}_1 &= qk_{m-1} + (k_{m-1}-1)R_1 = (k_{m-1}-1)(R_1 +q) + q \\
	&\leq 3C(k_{m-1}-1)R_j + 1 < 4C(k_{m-1}-1)R_j \leq 4C\tilde{Q_j}
\end{align*}
for all $j$ and analogously for the other entries. Hence the conditions of State 4 hold.
\end{itemize}
Since for a full matrix in State 4, 
any further left multiplications with $\bD_{m-2}$ etc., preserves the conditions of State 4, this proves the lemma for $k_n \geq 2$.
%
% For the case that $k_n = 1 < k_{n-1}$
% $$\bA^{n-1}= \begin{pmatrix}
%              k_{n-1} & k_{n-1}-1 & k_{n-1}-1 \\ 0 & 1 & 0 \\ 1 & 1 & 1
%             \end{pmatrix}
% $$
% fits in State 3 with $q = 0$ and $P_3 = P_2 < P_1 < 3CP_3$ and the same state transitions work.
\end{proof}

\end{document}